\documentclass{amsart}

\usepackage{macros}
\standardsettings

\usepackage{wrapfig}
\usepackage{lscape}
\usepackage{rotating}
\usepackage{epstopdf}

\newcommand{\mult}{\times}

\newcommand{\Hyp}{\scrH}
\newcommand{\Sph}{\scrS}

\newcommand{\alphabet}{A}
\newcommand{\borel}{E}

\newcommand{\Index}{n}

\usepackage{stackrel}

\draftfalse

\theoremstyle{theorem}
\maketheorem{reduction}{Reduction}

\begin{document}
\title[Extremality and dynamically defined measures, II]{Extremality and dynamically defined measures, part II: Measures from conformal dynamical systems}

\authortushar\authorlior\authordavid\authormariusz

\subjclass[2010]{Primary 11J13, 11J83, 28A75, secondary 37F35}
\keywords{Metric Diophantine approximation, geometric measure theory, fractals, conformal dynamical systems, Kleinian groups, Julia sets, Iterated Function System (IFS), extremal measures}

\begin{Abstract}
We present a new method of proving the Diophantine extremality of various dynamically defined measures, vastly expanding the class of measures known to be extremal. This generalizes and improves the celebrated theorem of Kleinbock and Margulis [Logarithm laws for flows on homogeneous spaces. {\it Invent. Math.} {\bf 138}(3) (1999), 451--494] resolving Sprind\v zuk's conjecture, as well as its extension by Kleinbock, Lindenstrauss, and Weiss [On fractal measures and Diophantine approximation. {\it Selecta Math.} {\bf 10} (2004), 479--523], hereafter abbreviated KLW. As applications we prove the extremality of all hyperbolic measures of smooth dynamical systems with sufficiently large Hausdorff dimension, and of the Patterson--Sullivan measures of all nonplanar geometrically finite groups. The key technical idea, which has led to a plethora of new applications, is a significant weakening of KLW's sufficient conditions for extremality.
In the first of this series of papers [Extremality and dynamically defined measures, part I: Diophantine properties of quasi-decaying measures. {\it Selecta Math.} {\bf 24}(3) (2018), 2165--2206], we introduce and develop a systematic account of two classes of measures, which we call {\it quasi-decaying} and {\it weakly quasi-decaying}. We prove that weak quasi-decay implies strong extremality in the matrix approximation framework, as well as proving the ``inherited exponent of irrationality'' version of this theorem.
In this paper, the second of the series, we establish sufficient conditions on various classes of conformal dynamical systems for their measures to be quasi-decaying. In particular, we prove the above-mentioned result about Patterson--Sullivan measures, and we show that equilibrium states (including conformal measures) of nonplanar infinite iterated function systems (including those which do not satisfy the open set condition) and rational functions are quasi-decaying. 
\end{Abstract}

\maketitle
\tableofcontents

\section{Introduction}

In this series of papers we address a central problem in the flourishing area of metric Diophantine approximation on manifolds and measures: an attempt to exhibit a possibly widest natural class of sets and measures for which most points are not very well approximable by ones with rational coordinates.

Fix $d\in\N$. The quality of rational approximations to a vector $\xx\in\R^d$ can be measured by its \emph{exponent of irrationality}, which is defined by the formula
\[
\omega(\xx) = \limsup_{\pp/q\in\Q^d} \frac{-\log\|\xx - \pp/q\|}{\log(q)},
\]
where the limsup is taken over any enumeration of $\Q^d$, and $\|\cdot\|$ is any norm on $\R^d$. Another interesting quantity is the \emph{exponent of multiplicative irrationality}, which is the number
\[
\omega_\mult(\xx) = \limsup_{\pp/q\in\Q^d} \frac{-\log\prod_{i = 1}^d |x_i - p_i/q|}{\log(q)}\cdot
\]
It follows from a pigeonhole argument that $\omega(\xx) \geq 1 + 1/d$ and $\omega_\mult(\xx) \geq d + 1$. A vector $\xx$ is said to be \emph{very well approximable} if $\omega(\xx) > 1 + 1/d$, and \emph{very well multiplicatively approximable} if $\omega_\mult(\xx) > d + 1$. We will denote the set of very well (multiplicatively) approximable vectors by $\mathrm{VW(M)A}_d$. It is well-known that $\VWA_d$ and $\VWMA_d$ are both Lebesgue nullsets of full Hausdorff dimension, and that $\VWA_d \subset \VWMA_d$.

A measure $\mu$ on $\R^d$ is \emph{extremal} if $\mu(\VWA_d) = 0$, and \emph{strongly extremal} if $\mu(\VWMA_d) = 0$. Extremality was first defined by V.~G.~Sprind\v zuk, who conjectured that the Lebesgue measure of any nondegenerate manifold is extremal. This conjecture was proven by D.~Y.~Kleinbock and G.~A.~Margulis \cite{KleinbockMargulis}, and later strengthened by D.~Y.~Kleinbock, E.~Lindenstrauss, and B.~Weiss (hereafter abbreviated ``KLW'') in \cite{KLW}, who considered a class of measures which they called ``friendly'' and showed that these measures are strongly extremal. However, their definition is somewhat rigid and many interesting measures, in particular ones coming from dynamics, do not satisfy their condition. In this paper, we study a much larger class of measures, which we call \emph{weakly quasi-decaying}, such that every weakly quasi-decaying measure is strongly extremal \cite[Corollary 1.8]{DFSU_GE1}. This class includes a subclass of \emph{quasi-decaying} measures, which are the analogue of KLW's ``absolutely friendly'' measures. (The terminology ``absolutely friendly'' was not used by KLW and first appeared in \cite{PollingtonVelani}; however, several theorems about absolute friendliness had already appeared in \cite{KLW} without using the terminology.)

In the previous paper (Part I), which has now appeared as \cite{DFSU_GE1}, we analyzed the basic properties of the quasi-decay and weak quasi-decay conditions, as well as proving that every weakly quasi-decaying measure is strongly extremal. In fact, we proved a more refined version of that theorem which we will not state here. We proved that every exact dimensional measure on $\R^d$ whose dimension $\delta$ satisfies $\delta > d - 1$ is quasi-decaying. This elementary result already provides many dynamical examples of quasi-decaying measures; for example, a theorem of F.~Hofbauer \cite{Hofbauer} states that any measure invariant under a piecewise smooth endomorphism of $[0,1]$ which has positive entropy is exact dimensional of positive dimension; since $d = 1$ in this setup, this implies that such measures are quasi-decaying. (See \6\ref{subsectionexamples} for more details.)

In the current paper (Part II) we continue with this theme, examining several classes of conformal dynamical systems to see under what circumstances the inequality $\delta > d - 1$ can be relaxed to a ``nonplanarity'' assumption. The examples we will consider are intended to support the thesis that ``any sufficiently non-pathological measure coming from dynamics or fractal geometry is quasi-decaying''. As a contrast, we also provide examples of non-extremal measures coming from dynamics, to gain a better insight as to which measures should be considered ``pathological''.

In future work, we hope to elucidate the scope of our thesis that ``almost any measure from dynamics and/or fractal geometry is quasi-decaying'' by continuing to find examples of quasi-decaying measures, including random measures and measures invariant under diffeomorphisms.

There is a wide range of potential research directions that readers may follow up our paper with. For instance, a natural next step would be to move beyond the conformal realm and determine the extremality of self-affine fractal sets. One possibility would be to study sponges, e.g. as in \cite{DasSimmons1}.

\begin{convention}
The symbols $\lesssim$, $\gtrsim$, and $\asymp$ will denote coarse asymptotics; a subscript of $\plus$ indicates that the asymptotic is additive, and a subscript of $\times$ indicates that it is multiplicative. For example, $A\lesssim_{\times,K} B$ means that there exists a constant $C > 0$ (the \emph{implied constant}), depending only on $K$, such that $A\leq C B$. $A\lesssim_{\plus,\times}B$ means that there exist constants $C_1,C_2 > 0$ so that $A\leq C_1 B + C_2$. In general, dependence of the implied constant(s) on universal objects will be omitted from the notation.

If $\mu$ and $\nu$ are measures, then $\nu\lesssim_\times\mu$ means that there exists a constant $C > 0$ such that $\nu\leq C\mu$.
\end{convention}

\begin{convention}
In this paper, all measures and sets are assumed to be Borel, and measures are assumed to be locally finite.
\end{convention}

\begin{convention}
For $S\subset\R^d$ and $\rho \geq 0$, $\thickvar S\rho = \{x\in\R^d:\dist(x,S) \leq \rho\}$ is the closed $\rho$-thickening of $S$, and $\thickopenvar{S}{\rho} = \{x\in\R^d:\dist(x,S) < \rho\}$ is the open $\rho$-thickening of $S$.
\end{convention}

\begin{convention}
$A\wedge B$ and $A\vee B$ denote the minimum and maximum of $A$ and $B$, respectively.
\end{convention}

\begin{convention}
\label{conventioniverson}
We use the Iverson bracket notation $[\text{statement}] = \begin{cases} 1 & \text{statement true} \\ 0 & \text{statement false} \end{cases}$.
\end{convention}

\begin{convention}
The image of a measure $\mu$ under a map $f$ is denoted $f_*[\mu] := \mu\circ f^{-1}$.
\end{convention}

\begin{convention}
$\Hyp$ denotes the collection of affine hyperplanes in $\R^d$.
\end{convention}

\begin{convention}
The symbol $\triangleleft$ will be used to indicate the end of a nested proof.
\end{convention}

\begin{convention}
The Hausdorff dimension of a set $S$ is denoted by $\HD(S)$, and $\scrH^s(S)$ denotes the $s$-dimensional Hausdorff measure of $S$.
\end{convention}

{\bf Acknowledgements.} The first-named author was supported in part by a 2014-2015 Faculty Research Grant from the University of Wisconsin--La Crosse. The second-named author was supported in part by the Simons Foundation grant \#245708. The third-named author was supported in part by the EPSRC Programme Grant EP/J018260/1, and by a fellowship from the Royal Society. The fourth-named author was supported in part by the NSF grant DMS-1361677. We thank the referee for useful comments and catching a number of typos that helped improve the readability, and also for asking questions that led to the inclusion of Appendix \ref{appendix}.

\subsection{Four conditions which imply strong extremality}
In \cite[\61.1]{DFSU_GE1}, we introduced the notions of quasi-decaying measures and weakly quasi-decaying measures, and compared them with KLW's notions of friendly and absolutely friendly measures. While we recall the definitions of the four classes of measures here, we refer to Part I for a more detailed discussion. 

\begin{definition}
\label{definitionabsolutelyfriendly}
Let $\mu$ be a measure on an open set $U\subset\R^d$, and let $\Supp(\mu)$ denote the topological support of $\mu$.
\begin{itemize}
\item $\mu$ is called \emph{absolutely decaying (resp. decaying)} if there exist $C_1,\alpha > 0$ such that for all $\xx\in\Supp(\mu)$, $0 < \rho \leq 1$, $\beta > 0$, and $\LL\in\Hyp$, if $B = B(\xx,\rho) \subset U$ then
\begin{align}
\label{absolutelydecaying}
\mu\big(\thickopenvar\LL{\beta\rho}\cap B\big) &\leq C_1 \beta^\alpha\mu(B) & \text{(absolutely decaying)}
\end{align}
or
\begin{align}
\label{decaying}
\mu\big(\thickopenvar\LL{\beta \|d_\LL\|_{\mu,B}}\cap B\big) &\leq C_1 \beta^\alpha\mu(B) & \text{(decaying)},
\end{align}
respectively, where
\[
\|d_\LL\|_{\mu,B} := \sup\{\dist(\yy,\LL):\yy\in B\cap\Supp(\mu)\}.
\]
\item $\mu$ is called \emph{nonplanar} if $\mu(\LL) = 0$ for all $\LL\in\Hyp$. Note that every absolutely decaying measure is nonplanar. Moreover, the decaying and nonplanarity conditions can be combined notationally by using closed thickenings rather than open ones: a measure $\mu$ is decaying and nonplanar if and only if there exist $C_1,\alpha > 0$ such that for all $\xx\in\Supp(\mu)$, $0 < \rho \leq 1$, $\beta > 0$, and $\LL\in\Hyp$, if $B = B(\xx,\rho) \subset U$ then
\begin{align}
\label{decayingprime}
\mu\big(\thickvar\LL{\beta \|d_\LL\|_{\mu,B}}\cap B\big) &\leq C_1 \beta^\alpha\mu(B). & \text{(decaying and nonplanar)}
\end{align}
\item $\mu$ is called \emph{Federer} (or \emph{doubling}) if for some (equiv. for all) $K > 1$, there exists $C_2 > 0$ such that for all $\xx\in\Supp(\mu)$ and $0 < \rho\leq 1$, if $B(\xx,K\rho)\subset U$ then
\begin{equation}
\label{federer}
\mu\big(B(\xx,K\rho)\big) \leq C_2\mu\big(B(\xx,\rho)\big).
\end{equation}
\end{itemize}
If $\mu$ is Federer, decaying, and nonplanar, then $\mu$ is called \emph{friendly}; if $\mu$ is both absolutely decaying and Federer, then $\mu$ is called \emph{absolutely friendly}.\Footnote{As KLW put it, the word ``friendly'' is ``a somewhat fuzzy abbreviation of \emph{Federer, nonplanar, and decaying}''.} When the open set $U$ is not explicitly mentioned, we assume that it is all of $\R^d$; otherwise we say that $\mu$ is absolutely decaying, friendly, etc. ``relative to $U$''.
\end{definition}
\begin{definition}
\label{definitionQD}
Let $\mu$ be a measure on $\R^d$ and consider $\xx\in \borel\subset\R^d$. We will say that $\mu$ is \emph{quasi-decaying (resp. weakly quasi-decaying) at $\xx$ relative to $\borel$} if for all $\gamma > 0$, there exist $C_1,\alpha > 0$ such that for all $0 < \rho \leq 1$, $0 < \beta \leq \rho^\gamma$, and $\LL\in\Hyp$, if $B = B(\xx,\rho)$ then
\begin{align}
\label{QDwithE}
\mu\left(\thickvar{\LL}{\beta\rho}\cap B\cap \borel\right) &\leq C_1 \beta^\alpha \mu(B) & \text{(quasi-decaying)}
\end{align}
or
\begin{align}
\label{weakQDwithE}
\mu\left(\thickvar{\LL}{\beta\|\dist_\LL\|_{\mu,B}}\cap B\cap \borel\right) &\leq C_1 \beta^\alpha \mu(B) & \text{(weakly quasi-decaying)},
\end{align}
respectively. We will say that $\mu$ is \emph{(weakly) quasi-decaying relative to $\borel$} if for $\mu$-a.e. $\xx\in \borel$, $\mu$ is (weakly) quasi-decaying at $\xx$ relative to $\borel$. Finally, we will say that $\mu$ is \emph{(weakly) quasi-decaying} if there exists a sequence $(\borel_\Index)_\Index$ such that $\mu\left(\R^d\butnot\bigcup_\Index \borel_\Index\right) = 0$ and for each $\Index$, $\mu$ is (weakly) quasi-decaying relative to $\borel_\Index$.
\end{definition}

We also recall that the following implications hold:

\begin{center}
\begin{tabular}{|ccc|}
\hline
Absolutely friendly & \implies & Friendly\\
$\Downarrow$ & & $\Downarrow$\\
Quasi-decaying & \implies & Weakly quasi-decaying\\
\hline
\end{tabular}
\end{center}
Moreover, all four classes of measures are contained in the class of extremal measures. Now we are able to go farther, using examples from later in this paper as well as from the literature to show that these implications are all strict. See Figure \ref{figureexamples} for more details.

\begin{figure}
\scriptsize
\thispagestyle{plain}
\begin{tabular}{||c||c|c|c||}
\hline
\hline
& Absolutely friendly & \spc{Friendly but not \\ absolutely friendly} & Not friendly\\
\hline
\hline
QD & \spc{$\bullet$ Patterson--Sullivan measures \\ of convex-cocompact groups \\\\ $\bullet$ Equilibrium states of \\ finite IFSes and \\ hyperbolic rational functions} & \spc{$\bullet$ Patterson--Sullivan measures \\ of geometrically finite groups \\ which satisfy $k_{\min} < d$} & \spc{$\bullet$ Equilibrium states \\ of nonplanar infinite IFSes \\ and rational functions\\ (Appendix \ref{appendix})}\\
\hline
\spc{WQD$\butnot$QD} & Impossible & \spc{$\bullet$ Lebesgue measures of \\ nondegenerate manifolds} & \spc{$\bullet$ Conformal measures of \\ infinite IFSes which \\ have invariant spheres}\\
\hline
\spc{E$\butnot$WQD} & Impossible & Impossible & \spc{$\bullet$ Measures with finite \\ Lyapunov exponent and \\ zero entropy under \\ the Gauss map}\\
\hline
Not E & Impossible & Impossible & \spc{$\bullet$ Generic invariant measures of \\ hyperbolic toral endomorphisms \\\\ $\bullet$ Certain measures with \\ infinite Lyapunov exponent \\ under the Gauss map \\ \cite[Theorem 4.5]{FSU1}}\\
\hline
\hline
\end{tabular}
\caption{Some representative examples of dynamically defined measures compared on two axes: quasi-decay and friendliness. In the leftmost column we use the abbreviations QD = quasi-decaying, WQD = weakly quasi-decaying, and E = extremal. }
\label{figureexamples}
\end{figure}

\subsection{Ahlfors regularity vs. exact dimensionality}
One way of thinking about the difference between KLW's conditions and our conditions is by comparing this difference with the difference between the classes of \emph{Ahlfors regular} and \emph{exact dimensional} measures, both of which are well-studied in dynamics. We recall their definitions:

\begin{definition*}
A measure $\mu$ on $\R^d$ is called \emph{Ahlfors $\delta$-regular} if there exists $C > 0$ such that for every ball $B(\xx,\rho)$ with $\xx\in\Supp(\mu)$ and $0 < \rho \leq 1$.
\[
C^{-1} \rho^\delta \leq \mu\big(B(\xx,\rho)\big) \leq C \rho^\delta.
\]
The measure $\mu$ is called \emph{exact dimensional of dimension $\delta$} if for $\mu$-a.e. $\xx\in\R^d$,
\begin{equation}
\label{exactdim}
\lim_{\rho\searrow 0} \frac{\log\mu\big(B(\xx,\rho)\big)}{\log\rho} = \delta.
\end{equation}
\end{definition*}

The philosophical relations between Ahlfors regularity and exact dimensionality with absolute friendliness and quasi-decay, respectively, are:
\begin{equation}
\label{philosophical}
\begin{split}
\text{Ahlfors regular and ``nonplanar''} \;\; &\Rightarrow \;\; \text{Absolutely friendly}\\
\text{Exact dimensional and ``nonplanar''} \;\; &\Rightarrow \;\; \text{Quasi-decaying}
\end{split}
\end{equation}
Here ``nonplanar'' does not refer to nonplanarity as defined in Definition \ref{definitionabsolutelyfriendly}, but is rather something less precise (and stronger). This less precise definition should rule out examples like the Lebesgue measures of nondegenerate manifolds, since these are not quasi-decaying. One example of a ``sufficient condition'' for this imprecise notion of ``nonplanarity'' is simply the inequality $\delta > d - 1$, where $\delta$ is the dimension of the measure in question. In particular, in this context the relations \eqref{philosophical} are made precise by the following theorems:

\begin{theorem}[{\cite[Proposition 6.3]{KleinbockWeiss1}}; cf. \cite{PollingtonVelani,Urbanski3}]
\label{theoremahlforsregular}
If $\delta > d - 1$, then every Ahlfors $\delta$-regular measure on $\R^d$ is absolutely friendly.
\end{theorem}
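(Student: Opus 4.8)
The plan is to verify directly the two ingredients in the definition of absolute friendliness (Definition \ref{definitionabsolutelyfriendly}): the Federer property and the absolute decay property. The Federer part will be essentially free: if $B(\xx,K\rho)\subset\R^d$ with $\xx\in\Supp(\mu)$ and $0<\rho\le 1$, then by Ahlfors regularity $\mu\big(B(\xx,K\rho)\big)\le C(K\rho)^\delta = CK^\delta\rho^\delta\le C^2K^\delta\,\mu\big(B(\xx,\rho)\big)$, which is \eqref{federer} with $C_2 = C^2K^\delta$. So the whole content of the argument is in establishing the absolute decay estimate \eqref{absolutelydecaying}, and the natural guess — which I expect to work — is that it holds with decay exponent $\alpha = \delta-(d-1)$, which is positive precisely because of the hypothesis $\delta>d-1$.

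For the decay estimate, fix $\xx\in\Supp(\mu)$, $0<\rho\le 1$, $\LL\in\Hyp$, and put $B=B(\xx,\rho)$. First I would dispose of the trivial range $\beta\ge 1$, where \eqref{absolutelydecaying} holds with any $C_1\ge 1$ since $\mu\big(\thickopenvar\LL{\beta\rho}\cap B\big)\le\mu(B)$. So assume $0<\beta<1$. The set $\thickopenvar\LL{\beta\rho}\cap B$ is a slab of thickness $\asymp\beta\rho$ intersected with a ball of radius $\rho$, and the key geometric fact is that it can be covered by $\lesssim_\times\beta^{-(d-1)}$ balls of radius $\beta\rho$ centered at points of $\Supp(\mu)$: take a maximal $\beta\rho$-separated subset $\{\xx_i\}$ of $\thickopenvar\LL{\beta\rho}\cap B\cap\Supp(\mu)$; by maximality the balls $B(\xx_i,\beta\rho)$ cover $\thickopenvar\LL{\beta\rho}\cap B\cap\Supp(\mu)$, while the balls $B(\xx_i,\beta\rho/2)$ are pairwise disjoint, each contained in a slab of thickness $\asymp\beta\rho$ inside a ball of radius $\asymp\rho$ — a set of Lebesgue measure $\asymp\beta\rho^d$ — so their number is $\lesssim_\times\beta\rho^d/(\beta\rho)^d=\beta^{-(d-1)}$. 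Applying the Ahlfors upper bound to each $B(\xx_i,\beta\rho)$ (legitimate since $\xx_i\in\Supp(\mu)$ and $\beta\rho\le 1$) and summing,
\[
\mu\big(\thickopenvar\LL{\beta\rho}\cap B\big)\le\sum_i\mu\big(B(\xx_i,\beta\rho)\big)\le C\sum_i(\beta\rho)^\delta\lesssim_\times\beta^{-(d-1)}(\beta\rho)^\delta=\beta^{\delta-(d-1)}\rho^\delta,
\]
and since the Ahlfors lower bound gives $\mu(B)\ge C^{-1}\rho^\delta$, we conclude $\mu\big(\thickopenvar\LL{\beta\rho}\cap B\big)\lesssim_\times\beta^{\delta-(d-1)}\mu(B)$, i.e.\ \eqref{absolutelydecaying} with $\alpha=\delta-(d-1)>0$.

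I do not anticipate a serious obstacle here; the one point that has to be executed carefully is the covering combinatorics — that a $\beta\rho$-neighborhood of a hyperplane meeting a ball of radius $\rho$ is coverable by $O(\beta^{-(d-1)})$ balls of radius $\beta\rho$ — which is a routine volume/packing estimate, cleanest to run in a coordinate system in which $\LL$ is a coordinate hyperplane. It is worth noting that the hypothesis $\delta>d-1$ is used only to make $\alpha$ positive: the computation itself goes through for any $\delta$, but for $\delta\le d-1$ it yields a non-positive exponent and no genuine decay, consistent with the fact that an Ahlfors $(d-1)$-regular measure such as Lebesgue measure on an affine hyperplane of $\R^d$ is not absolutely decaying. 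Since absolute decay implies nonplanarity (as recorded in Definition \ref{definitionabsolutelyfriendly}), the combination of the two verified properties is exactly absolute friendliness.
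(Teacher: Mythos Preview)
Your proof is correct and is essentially the standard argument. Note, however, that the paper does not supply its own proof of this statement: it is quoted from the literature (Kleinbock--Weiss, with related arguments in Pollington--Velani and Urba\'nski), so there is no ``paper's proof'' to compare against. Your covering-and-packing argument, yielding the decay exponent $\alpha=\delta-(d-1)$, is exactly the approach taken in those references; the only point worth tightening is the Federer verification when $K\rho>1$ (since Ahlfors regularity is only assumed for radii $\le 1$), which is easily handled by covering $B(\xx,K\rho)$ with boundedly many balls of radius $\le 1$ or by treating the range $\rho\in(1/K,1]$ separately using local finiteness.
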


\begin{theorem}[{\cite[Theorem 1.5]{DFSU_GE1}}]
\label{theoremexactdim}
If $\delta > d - 1$, then every exact dimensional measure on $\R^d$ of dimension $\delta$ is quasi-decaying.
\end{theorem}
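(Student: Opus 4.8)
The plan is to derive quasi-decay from exact dimensionality by combining three ingredients: (i) Egorov's theorem, to upgrade the $\mu$-a.e.\ scaling behaviour of $\mu$ into a \emph{uniform} two-sided scaling estimate on a countable family of sets exhausting $\mu$-a.e.\ point; (ii) the elementary geometric observation that for $\LL\in\Hyp$ the set $\thickvar\LL{\beta\rho}\cap B(\xx,\rho)$ is a slab of Lebesgue volume $\asymp\beta\rho^d$, hence coverable by $\asymp\beta^{1-d}$ balls of radius $\beta\rho$; and (iii) the hypothesis $\delta>d-1$, which makes the exponent that comes out of (i)+(ii) strictly positive once the error $\epsilon$ in (i) is taken small relative to the gap exponent $\gamma$.

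First I would construct the sets $\borel_n$. Restricting $\mu$ to balls $B(\mathbf 0,N)$ (to obtain finite measures, using local finiteness) and applying Egorov's theorem to the functions $\yy\mapsto\log\mu\big(B(\yy,2^{-k})\big)/\log(2^{-k})$, which by \eqref{exactdim} converge to $\delta$ for $\mu$-a.e.\ $\yy$, I get for each $N$ a countable family of subsets of $B(\mathbf 0,N)$ exhausting it up to a $\mu$-nullset, on each of which the convergence is uniform; relabelling the whole doubly-indexed family as $(\borel_n)_n$ gives $\mu\big(\R^d\butnot\bigcup_n\borel_n\big)=0$. On each $\borel_n$, uniform convergence (and the standard transfer from dyadic to all scales, which costs an arbitrarily small loss of exponent) means: for every $\epsilon>0$ there are constants, depending on $n$ and $\epsilon$, such that $c_1 r^{\delta+\epsilon}\le\mu\big(B(\yy,r)\big)\le c_2 r^{\delta-\epsilon}$ for all $\yy\in\borel_n$ and all $0<r\le 1$ (extending from $r\le r_0(n,\epsilon)$ to all $r\le 1$ again uses local finiteness).

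Now fix $n$ and $\gamma>0$, and choose $\epsilon=\epsilon(n,\gamma)>0$ small enough that $\alpha:=(\delta-d+1)-\epsilon\big(1+2/\gamma\big)>0$; this is where $\delta>d-1$ enters. Fix $\xx\in\borel_n$, $0<\rho\le1$, $0<\beta\le\rho^\gamma$, $\LL\in\Hyp$, and set $B=B(\xx,\rho)$. Choose a maximal $2\beta\rho$-separated subset $\{\yy_i\}$ of $\thickvar\LL{\beta\rho}\cap B\cap\borel_n\cap\Supp(\mu)$. The balls $B(\yy_i,\beta\rho)$ are pairwise disjoint and lie inside the slab $\thickvar\LL{2\beta\rho}\cap B(\xx,2\rho)$ of volume $\asymp\beta\rho^d$, while each has volume $\asymp(\beta\rho)^d$, so $\#\{i\}\lesssim\beta^{1-d}$; by maximality the balls $B(\yy_i,2\beta\rho)$ cover $\thickvar\LL{\beta\rho}\cap B\cap\borel_n$ up to a $\mu$-nullset. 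Since each $\yy_i\in\borel_n$, summing the upper scaling bound gives
\[
\mu\big(\thickvar\LL{\beta\rho}\cap B\cap\borel_n\big)\le\sum_i\mu\big(B(\yy_i,2\beta\rho)\big)\lesssim\beta^{1-d}(\beta\rho)^{\delta-\epsilon}.
\]
Dividing by the lower bound $\mu(B)\gtrsim\rho^{\delta+\epsilon}$ and using $\beta\le\rho^\gamma$, i.e.\ $\rho^{-2\epsilon}\le\beta^{-2\epsilon/\gamma}$, yields $\mu\big(\thickvar\LL{\beta\rho}\cap B\cap\borel_n\big)\lesssim\beta^{\alpha}\mu(B)$, which is exactly \eqref{QDwithE}. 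Thus $\mu$ is quasi-decaying at every $\xx\in\borel_n$ relative to $\borel_n$, hence quasi-decaying relative to each $\borel_n$, hence quasi-decaying.

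The main obstacle is the uniformity built into the first step. Exact dimensionality only supplies the two-sided scaling estimate below a \emph{point-dependent} threshold, and — since in Definition \ref{definitionQD} the constants $C_1,\alpha$ may depend on $\gamma$ but the sequence $(\borel_n)_n$ may not — a single good set must support the estimate for \emph{every} $\gamma>0$, equivalently for every $\epsilon>0$. This is precisely why genuine uniform convergence (Egorov), rather than a naive explicit definition of $\borel_n$ freezing one value of $\epsilon$, is required; the remaining covering-and-volume bookkeeping is routine.
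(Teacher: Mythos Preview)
Your argument is correct. Note that the present paper (Part II) does not itself prove this theorem; it merely cites it as [Part I, Theorem 1.5], so there is no proof here to compare against. That said, the approach you outline---Egorov to obtain uniform two-sided power-law bounds on sets $\borel_n$, then a covering of the slab $\thickvar\LL{\beta\rho}\cap B\cap\borel_n$ by $\lesssim\beta^{1-d}$ balls of radius $\beta\rho$ centred in $\borel_n$, followed by the arithmetic with $\beta\le\rho^\gamma$---is exactly the natural one and matches what one would expect the Part I argument to be. Your observation that the $\borel_n$ must be fixed independently of $\gamma$, forcing genuine uniform convergence rather than a single frozen $\epsilon$, is the only real subtlety and you have handled it correctly.
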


\subsection{Examples of friendly and absolutely friendly measures}
\label{subsectionfriendlyexamples}
The two most canonical examples of friendly measures are the Lebesgue measure of a nondegenerate submanifold of $\R^d$ and the image of an absolutely friendly measure under a nondegenerate embedding \cite[Theorem 2.1]{KLW}. Neither of these measures are absolutely friendly; other than Lebesgue measure, the most canonical example of an absolutely friendly measure is the Hausdorff measure of the limit set of a finite irreducible iterated function system satisfying the open set condition. The following theorem is a slight refinement of KLW's original theorem regarding such measures. We note that the refinement is to weaken the irreducibility hypothesis; in \cite[Theorem 2.3]{KLW} the assumption that $K$ is not contained in any finite union of affine hyperplanes is required, but it turns out that this assumption is in fact equivalent to the more natural and formally weaker assumption that $K$ is not contained in any affine hyperplane.

\begin{theorem}[{\cite[Proposition 3.1]{BFS1}}, cf. {\cite[Theorem 2.3]{KLW}}]
\label{theoremsimilarityIFS}
Let $\{u_1,\ldots,u_m\}$ be a family of contracting similarities of $\R^d$ satisfying the open set condition (see Definition \ref{definitionCIFS}), and let $K$ be the limit set of this family (see Definition \ref{definitionlimitset}). If $K$ is not contained in an affine hyperplane, then $\mu = \scrH^\delta\given K$ is absolutely friendly, where $\delta = \HD(K)$.
\end{theorem}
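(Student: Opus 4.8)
The plan is to verify the two defining properties of an absolutely friendly measure for $\mu := \scrH^\delta\given K$: the Federer property and absolute decay. The Federer property is free, since self-similar sets with the open set condition are Ahlfors $\delta$-regular — a classical fact, which also gives $\Supp(\mu) = K$, $0 < \scrH^\delta(K) < \infty$, and the normalization $\sum_{i=1}^m r_i^\delta = 1$, where $r_i$ denotes the ratio of $u_i$; note that Theorem \ref{theoremahlforsregular} does not apply directly, because $\delta$ may be smaller than $d-1$, so nonplanarity has to do genuine work. For absolute decay, write $u_\omega := u_{i_1}\circ\cdots\circ u_{i_n}$ and $r_\omega := r_{i_1}\cdots r_{i_n}$ for a word $\omega = i_1\cdots i_n$. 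Given a ball $B = B(\xx,\rho)$ with $\xx\in K$ and $0 < \rho\le 1$, I would use the open set condition to cover $B\cap K$ by a number of cylinders $u_\omega(K)$ with $r_\omega\asymp\rho$ that is bounded independently of $\xx$ and $\rho$ (the usual volume-packing argument applied to the pairwise disjoint sets $u_\omega(U_0)$, with $U_0$ the OSC open set). Since each $u_\omega$ is a similarity of ratio $r_\omega$ and $u_\omega^{-1}$ carries an affine hyperplane $\LL$ to an affine hyperplane $\LL_\omega$, one has $\scrH^\delta(\thickvar\LL{\beta\rho}\cap u_\omega(K)) = r_\omega^\delta\,\scrH^\delta(\thickvar{\LL_\omega}{\beta\rho/r_\omega}\cap K)$; summing over these cylinders and using $\mu(B)\asymp\rho^\delta\asymp r_\omega^\delta$ reduces absolute decay to the scale-one estimate $\psi(t) := \sup_{\LL\in\Hyp}\scrH^\delta(\thickvar\LL t\cap K)\lesssim_\times t^\alpha$ for some $\alpha > 0$ (when $\beta\rho/r_\omega\gtrsim 1$ the decay inequality is trivial, so only $t\le 1$ matters).

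For the scale-one estimate I would first establish a base case: there exist $\epsilon, c > 0$ such that $\scrH^\delta(\thickvar\LL\epsilon\cap K)\le(1-c)\scrH^\delta(K)$ for all $\LL\in\Hyp$. If this failed, there would be hyperplanes $\LL_n$ with $\scrH^\delta(\thickvar{\LL_n}{1/n}\cap K)\to\scrH^\delta(K)$; since each $\LL_n$ meets the compact set $\thickvar K1$, a subsequence converges to some $\LL_\infty$, whence $\scrH^\delta(\thickvar{\LL_\infty}\eta\cap K) = \scrH^\delta(K)$ for every $\eta > 0$, and letting $\eta\searrow 0$ gives $\scrH^\delta(K\butnot\LL_\infty) = 0$; as $\mu$ has full support on $K$, this forces $K\subseteq\LL_\infty$, contradicting the hypothesis. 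Then I would iterate: for small $s$, take the complete antichain of cylinders with $r_\omega$ just below $s/r_{\min}$, so that $r_\omega > s$ and hence $\epsilon s/r_\omega < \epsilon$ on every piece; on each cylinder $u_\omega(K)$ meeting $\thickvar\LL{\epsilon s}$, pulling back and applying the base case gives $\scrH^\delta(\thickvar\LL{\epsilon s}\cap u_\omega(K))\le(1-c)\scrH^\delta(u_\omega(K))$, while the union of those cylinders lies in $\thickvar\LL{Cs}\cap K$ with $C := \epsilon + r_{\min}^{-1}\diam(K)$. Since $\sum_\omega r_\omega^\delta = 1$ over a complete antichain, this yields $\psi(\epsilon s)\le(1-c)\,\psi(Cs)$, i.e. $\psi(\theta u)\le(1-c)\,\psi(u)$ with $\theta := \epsilon/C\in(0,1)$, for all small $u$; iterating this a geometric number of times produces $\psi(t)\lesssim_\times t^\alpha$ with $\alpha = \log\frac1{1-c}\big/\log\frac1\theta > 0$, and the trivial bound $\psi(t)\le\scrH^\delta(K)$ handles the remaining range of $t$.

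The crux is the iteration step: pulling a thin slab back into an arbitrary small cylinder is useless, because the thinness ratio $t/r_\omega$ need not be small, so a naive recursion only reproves qualitative smallness of $\psi$ with no rate. The device that makes the recursion contract is to work only with the bounded-diameter cylinders that genuinely meet the slab $\thickvar\LL{\epsilon s}$ — there the base case applies with a uniform gain $1-c$ — and then to reabsorb their (essentially disjoint) union into the slightly fatter slab $\thickvar\LL{Cs}\cap K$; the strict inequality $C > \epsilon$ is exactly what converts "$\psi(\epsilon s)\le(1-c)\,\psi(\text{comparable scale})$" into a genuine power law. Everything else — Ahlfors regularity, the bounded covering of a ball by cylinders, and the identity $\sum_\omega r_\omega^\delta = 1$ — is routine self-similar bookkeeping, and the hypothesis that $K$ is contained in no affine hyperplane is used only, but essentially, in the base case.
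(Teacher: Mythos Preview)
Your proof is correct. Note, however, that the paper does not give its own proof of this theorem: it is quoted from \cite{BFS1} (cf.\ \cite{KLW}) as a known result, so there is no in-paper argument to compare against directly.

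That said, your strategy is exactly the standard one from those references, and it also parallels the architecture the present paper uses for the Patterson--Sullivan case. Your ``scale-one estimate'' $\psi(t)=\sup_{\LL\in\Hyp}\mu(\thickvar\LL t)\lesssim_\times t^\alpha$ is the analogue of Lemma~\ref{lemmaglobaldecay} (global decay), and your iteration $\psi(\theta u)\le(1-c)\psi(u)$ via a compactness base case mirrors Lemma~\ref{lemmapreglobaldecay} and its deduction of Lemma~\ref{lemmaglobaldecay}. The reduction from local absolute decay on $B(\xx,\rho)$ to the global estimate, carried out by covering $B\cap K$ with boundedly many cylinders of comparable size and pulling back through the similarities, is likewise the usual move; in the Patterson--Sullivan setting the paper accomplishes the same reduction via the bounded distortion of group elements (the argument following \eqref{gprimebounds}). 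So although the paper does not prove Theorem~\ref{theoremsimilarityIFS}, your proof is in complete harmony with its methods.
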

Here and hereafter, $\HD(S)$ denotes the Hausdorff dimension of a set $S$, and $\scrH^s(S)$ denotes the $s$-dimensional Hausdorff measure of $S$.

Theorem \ref{theoremsimilarityIFS} was generalized by the fourth-named author as follows:

\begin{theorem}[{\cite[Corollary 1.6]{Urbanski}}\Footnote{Although this theorem includes the assumption $d\geq 2$, the case $d = 1$ is true due to Theorem \ref{theoremahlforsregular} and \cite[Theorem 3.14]{MauldinUrbanski1}.}]
\label{theoremconformalfinite}
Let $\{u_1,\ldots,u_m\}$ be a finite irreducible conformal iterated function system (CIFS) (see Definition \ref{definitionCIFS}) on $\R^d$ satisfying the strong open set condition, and let $K$ be the limit set of this family. Then $\mu = \scrH^\delta\given K$ is absolutely friendly, where $\delta = \HD(K)$.
\end{theorem}
\begin{remark} 
Although KLW write that Theorem \ref{theoremconformalfinite} provides a partial answer to a conjecture of theirs \cite[Conjecture 10.6 and ``Added in proof'' below]{KLW}, it actually provides a complete answer. Indeed, suppose that a CIFS satisfies the irreducibility assumption of KLW but not that of \cite{Urbanski}. Then the limit set $K$ of this CIFS is contained in a proper nondegenerate real-analytic submanifold $M\subset \R^d$, which if $d\geq 3$ is a sphere. Let $\phi:U\to M$ be a coordinate chart, where $U\subset \R^{d - 1}$. If $d\geq 3$, assume that $\phi$ is stereographic projection. Then $\phi^{-1}(K)$ is the limit set of a CIFS which satisfies the irreducibility assumption of \cite{Urbanski}. Thus by Theorem \ref{theoremconformalfinite}, $\HH^\delta\given \phi^{-1}(K)$ is absolutely friendly, and by \cite[Theorem 2.1(b)]{KLW}, 
\[
\HH^\delta\given K \asymp_\times \phi_*[\HH^\delta\given \phi^{-1}(K)]
\] 
is friendly and thus strongly extremal.
\end{remark}

\begin{remark} 
Note that the hypotheses of Theorems \ref{theoremsimilarityIFS} and \ref{theoremconformalfinite} imply (without the use of the irreducibility/nonplanarity hypothesis) that $\mu$ is Ahlfors $\delta$-regular \cite[Theorem 3.14]{MauldinUrbanski1}, so these theorems provide an example of the philosophical interpretation \eqref{philosophical} described above.
\end{remark}

\begin{remark}
Actually, the main theorem of \cite{Urbanski} concerned a class of measures more general than those of the form $\scrH^\delta\given K$: the equilibrium states of H\"older continuous potential functions. Such measures are not in general Federer unless a separation condition which is stronger than the open set condition is assumed. For example, if $u_a(x) = (x + a)/2$, $\phi_a(x) = \log(2^a/3)$ ($a = 0,1$, $x\in [0,1]$), and $\mu_\phi$ is the image under binary expansion of the Bernoulli measure on $\{0,1\}^\N$ corresponding to digit frequencies $\mathrm{freq}(0) = 1/3$, $\mathrm{freq}(1) = 2/3$; then $\mu_\phi$ is a equilibrium state of $\phi$, and it follows that for large values of $n$ the intervals $[1/2 - 1/2^n,1/2]$ and $[1/2,1/2 + 1/2^n]$ have $\mu_\phi$-measures which are not comparable. This technicality caused a minor error in the statement of \cite[Theorem 1.5]{Urbanski}, which we correct below.
In fact, due to this error, Theorem \ref{theoremconformalfinite} cannot be deduced as a corollary of Theorem \ref{theoremgibbsfinite}, but since the measure in Theorem \ref{theoremconformalfinite} is Federer by \cite[Theorem 3.14]{MauldinUrbanski1}, the proof of Theorem \ref{theoremgibbsfinite} actually proves Theorem \ref{theoremconformalfinite} as well. \label{footnoterem12}
The correct statement of \cite[Theorem 1.5]{Urbanski} should read as follows:
\begin{theorem}[{\cite[Theorem 1.5]{Urbanski}}]
\label{theoremgibbsfinite}
Let $\{u_1,\ldots,u_m\}$ be a finite irreducible CIFS on $\R^d$. Let $\phi:\{1,\ldots,m\}\to\R$ be a H\"older continuous potential function, and let $\mu_\phi$ be an equilibrium state of $\phi$\Footnote{Or more precisely, the image of an equilibrium state of $\phi$ under the coding map $\pi:\{1,\ldots,m\}^\N \to \R^d$.} (see Definition \ref{definitiongibbs}). If $\mu_\phi$ is Federer (e.g. if $\mu_\phi$ satisfies the strong separation condition, see Definition \ref{definitionCIFS}), then $\mu_\phi$ is absolutely friendly.
\end{theorem}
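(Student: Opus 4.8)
The plan is to reduce Theorem~\ref{theoremgibbsfinite} to the already-established machinery by proving that a Gibbs measure $\mu_\phi$ of a finite irreducible CIFS is \emph{absolutely decaying} (the Federer assumption being granted by hypothesis), so that $\mu_\phi$ is absolutely friendly by Definition~\ref{definitionabsolutelyfriendly}. Since the CIFS is finite and conformal, its limit set $K$ satisfies $\mu_\phi = \pi_*[\widetilde\mu_\phi]$ where $\widetilde\mu_\phi$ is a Gibbs measure on the symbolic space $\{1,\dots,m\}^\N$; the bounded distortion property of conformal maps and the Gibbs property give, for each finite word $\omega$ of length $|\omega| = n$, comparisons $\operatorname{diam}(u_\omega(X)) \asymp_\times \|u_\omega'\|$ and $\widetilde\mu_\phi([\omega]) \asymp_\times \exp\!\big(S_n\phi(\omega) - nP\big)$, where $P$ is the topological pressure and $S_n\phi$ the Birkhoff sum. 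These two comparisons together yield a power-law relation of the form $\widetilde\mu_\phi([\omega]) \asymp_\times \operatorname{diam}(u_\omega(X))^{\,s_\phi}$ only when $\phi$ is cohomologous to a constant multiple of $\log\|u'\|$; in general one only has that $\mu_\phi$ is comparable on generating cylinders to a measure of multifractal type. The correct statement to extract is not exact regularity but rather a \emph{uniform doubling-free estimate} on how $\mu_\phi$-mass concentrates near hyperplanes.

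\textbf{Key steps, in order.} First I would fix $\xx \in K$, $0 < \rho \le 1$, $\beta > 0$, and a hyperplane $\LL$, and choose the ``natural scale'': let $n = n(\xx,\rho)$ be the length of the longest word $\omega$ with $\xx \in u_\omega(X)$ and $\operatorname{diam}(u_\omega(X)) \ge \rho$, so that $u_\omega(X) \subset B(\xx,\rho)$ up to a bounded multiplicative constant and, conversely, $B(\xx,\rho)$ is covered by boundedly many cylinders of generation $n$ (this uses the bounded-distortion/bounded-geometry properties of a conformal IFS, plus finiteness to get uniformity of the contraction ratios). Second, I would push the problem down one generation at a time: by conformality, for each generation-$n$ cylinder $u_\omega(X)$ meeting $B(\xx,\rho)$, its children $u_{\omega i}(X)$ are roughly round images of $X$ at comparable scales, so the number of children whose images come $\beta\rho$-close to $\LL$ is controlled by the number of children of a single generic copy of $X$ that come within a $\beta$-fraction of their own diameter of a hyperplane. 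Third — this is the crux — I would invoke the \emph{irreducibility (nonplanarity) hypothesis}: because $K$ is not contained in any affine hyperplane, there is a uniform $\ell_0$ and $c_0 > 0$ such that for \emph{every} hyperplane $\LL$ and every cylinder, at least a definite $\widetilde\mu_\phi$-proportion of the generation-$(\,\cdot + \ell_0)$ subcylinders lie entirely outside the $c_0\cdot(\text{diam})$-neighborhood of $\LL$; this is where one transfers the global nonplanarity of $K$ into a quantitative escape-from-hyperplanes estimate, using compactness of the space of hyperplanes and of $K$. Iterating this escape estimate across $\sim \log(1/\beta)$ generations gives geometric decay $\mu_\phi(\thickopenvar{\LL}{\beta\rho} \cap B(\xx,\rho)) \le C_1 \beta^\alpha \mu_\phi(B(\xx,\rho))$ with $\alpha$ determined by $c_0$, $\ell_0$, and the Gibbs constants. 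Finally I would assemble these into the inequality~\eqref{absolutelydecaying}, note that absolute decay plus the assumed Federer property is exactly absolute friendliness, and quote [Part~I, Corollary~1.8] (or the implications table) for strong extremality if desired.

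\textbf{Main obstacle.} The hard part is the uniform quantitative-escape step, i.e.\ upgrading ``$K$ is nonplanar'' to ``a definite mass of each cylinder's descendants escapes every hyperplane at a definite relative scale, with constants independent of the hyperplane and the cylinder.'' Two sources of nonuniformity must be tamed: (i) the hyperplane $\LL$ ranges over a noncompact family, but one reduces to a compact family by rescaling each cylinder $u_\omega(X)$ to unit size via $u_\omega^{-1}$ and noting that conformal bounded distortion makes the rescaled pieces uniformly bi-Lipschitz to a fixed bounded family of subsets of $X$, so $\LL$ can be replaced by a hyperplane meeting a fixed bounded ball; (ii) the potential $\phi$ being merely Hölder (not locally constant) means the Gibbs weights vary within a cylinder, but the standard Gibbs/bounded-distortion estimate $\widetilde\mu_\phi([\omega\tau]) \asymp_\times \widetilde\mu_\phi([\omega])\,\widetilde\mu_\phi([\tau])$ up to a constant depending only on $\phi$ and the system (not on $\omega,\tau$) exactly compensates for this, so the escaping-mass proportion is bounded below uniformly. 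Once both nonuniformities are handled, the iteration is routine. A secondary subtlety is that without the open set condition the cylinders $u_\omega(X)$ may overlap, so ``$B(\xx,\rho)$ is covered by boundedly many generation-$n$ cylinders'' needs the bounded multiplicity coming from the (still-assumed) geometry of a conformal IFS together with the explicitly-hypothesized Federer property of $\mu_\phi$ to control the overlaps measure-theoretically rather than set-theoretically.
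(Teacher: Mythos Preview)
The paper does not contain a proof of Theorem~\ref{theoremgibbsfinite}: it is quoted as a result from the literature (the cited \cite[Theorem~1.5]{Urbanski}), with the authors merely correcting the statement to include the Federer hypothesis. So there is no ``paper's own proof'' to compare against.

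That said, your outline is essentially the standard argument one expects for this type of result, and indeed the paper's proof of the related Theorem~\ref{theoremgibbs} (which yields quasi-decay rather than absolute friendliness) follows exactly the strategy you describe in your ``crux'' step: the set $G$ in \eqref{Gkrdef} encodes your uniform-escape-from-hyperplanes property, Claim~\ref{claimmuomegainduction} performs the geometric iteration across generations, and the compactness/normal-families argument at the end of Section~\ref{sectiongibbs} is precisely how one upgrades irreducibility to the uniform quantitative escape you identify as the main obstacle. Your handling of nonuniformity~(ii) via the quasi-multiplicativity $\widetilde\mu_\phi([\omega\tau]) \asymp_\times \widetilde\mu_\phi([\omega])\,\widetilde\mu_\phi([\tau])$ is also the right move. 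One small point: in your final paragraph you suggest that bounded multiplicity of cylinder covers might require the Federer hypothesis to control overlaps ``measure-theoretically''; in fact for a \emph{finite} conformal IFS (even without the open set condition) the cone condition and bounded distortion already give set-theoretic bounded multiplicity at comparable scales, so this is not where Federer is needed --- Federer is needed only to pass from the cylinder estimate to the ball estimate in \eqref{absolutelydecaying}.
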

\end{remark}

All three of these theorems concern measures related to conformal iterated function systems. There is a close relationship between CIFSes and two other classes of conformal dynamical systems: Kleinian groups and rational functions. The relationship between these last two classes is sometimes known as ``Sullivan's dictionary'' and in \cite[p.4]{DSU_rigidity}, three of the authors proposed that the class of CIFSes should be added as a ``third column'' to Sullivan's dictionary. Thus, we should expect to find analogues of Theorems \ref{theoremsimilarityIFS}-\ref{theoremgibbsfinite} in the settings of Kleinian groups and rational functions. Indeed, the appropriate analogue in the setting of Kleinian groups was proven by B.~O.~Stratmann and the fourth-named author:

\begin{theorem}[\cite{StratmannUrbanski1}]
\label{theoremCCK}
Let $G$ be a convex-cocompact group of M\"obius transformations of $\R^d$ which does not preserve any generalized sphere (i.e. sphere or plane). Then the Patterson--Sullivan measure of $G$ (see \6\ref{subsectionpattersonsullivan}) is absolutely friendly.
\end{theorem}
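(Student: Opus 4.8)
The plan is to deduce the theorem from two facts: the Ahlfors regularity of the Patterson--Sullivan measure of a convex-cocompact group, and a self-improving covering estimate that upgrades a purely qualitative decay statement to the power-law decay required by Definition~\ref{definitionabsolutelyfriendly}. Write $\Lambda=\Lambda_G$ for the limit set and $\mu$ for the Patterson--Sullivan measure; since $G$ preserves no generalized sphere it is nonelementary, so $\mu$ is nonatomic, and after conjugating $G$ so that $\infty\notin\Lambda$ we may take $\Lambda$ to be a compact subset of $\R^d$ and $\mu$ a finite measure on it. The first step is to recall Sullivan's shadow lemma in the convex-cocompact case: there is $C>0$ with $C^{-1}\rho^\delta\le\mu(B(\xx,\rho))\le C\rho^\delta$ for all $\xx\in\Lambda$ and $0<\rho\le1$, where $\delta=\HD(\Lambda)$. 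In particular $\mu$ is Ahlfors $\delta$-regular, hence Federer, so it remains only to verify the absolute decay condition~\eqref{absolutelydecaying}.

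The engine is the rescaling principle available for convex-cocompact groups: since the quotient $\mathrm{Hull}(\Lambda)/G$ of the convex hull of $\Lambda$ in $\mathbb{H}^{d+1}$ by $G$ is compact, for every $\xx\in\Lambda$ and $0<\rho\le1$ there is $g\in G$ carrying the point of $\mathbb{H}^{d+1}$ lying at Euclidean height $\rho$ over $\xx$ to within bounded distance of a fixed basepoint; such a $g$ maps $B(\xx,\rho)$ onto a ball $B(\yy,r)$ with $\yy\in\Lambda$ confined to a fixed compact set, $r\asymp_\times1$, and with uniformly bounded conformal distortion on $B(\xx,2\rho)$. Because M\"obius maps carry generalized spheres to generalized spheres and distort a thickening $\thickvar{\LL}{\beta\rho}$ by only a bounded factor, comparing $\mu$-measures under $g$ using the $\delta$-conformality of $\mu$ and the distortion bound reduces everything to a unit-scale estimate in which it is natural --- and necessary, because $G$ moves hyperplanes off the class of hyperplanes --- to let $\LL$ range over all generalized spheres. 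Concretely, it suffices to show that $f(\beta):=\sup\mu(\thickvar{\Sigma}{\beta\rho}\cap B)/\mu(B)$, the supremum over all generalized spheres $\Sigma$ and all balls $B=B(\xx,\rho)$ with $\xx\in\Lambda$, $0<\rho\le1$, satisfies $f(\beta)\lesssim_\times\beta^\alpha$ for some $\alpha>0$.

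Next I would prove the submultiplicative inequality $f(\beta)\le C_2\,f(\beta/\eta)\,f(C_3\eta)$ for all sufficiently small $\eta$ and all $\beta\le\eta$: cover $B=B(\xx,\rho)$ by boundedly overlapping balls $B_i=B(\xx_i,\eta\rho)$ with $\xx_i\in\Lambda$; only those $B_i$ meeting $\thickvar{\Sigma}{\beta\rho}$ contribute, each such $B_i$ lies in $\thickvar{\Sigma}{3\eta\rho}$, and on it $\mu(B_i\cap\thickvar{\Sigma}{\beta\rho})\le f(\beta/\eta)\mu(B_i)$; summing, using bounded overlap and the Federer property, bounds the total by $C_2 f(\beta/\eta)\,\mu(\thickvar{\Sigma}{C_3\eta\rho}\cap B)\le C_2 f(\beta/\eta)f(C_3\eta)\mu(B)$ (treatment of the boundary range of scales $\rho$ near $1$ being routine). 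Separately I would establish the purely qualitative fact $f(\beta)\to0$ as $\beta\searrow0$: by the rescaling principle $f(\beta)$ is dominated by a supremum over a unit-scale picture --- $\yy$ in a compact part of $\Lambda$, radii in a compact interval, and $\Sigma$ in the space of generalized spheres meeting a fixed ball containing $\Lambda$, which is compact once such spheres are recorded modulo that ball --- and since $\bigcap_{\beta>0}\thickvar{\Sigma}{\beta}=\Sigma$ and $\mu(\Sigma)=0$ for every generalized sphere, an upper-semicontinuity-and-compactness argument forces this supremum to zero. Granting these two facts, choose $\eta$ so small that $C_2f(C_3\eta)\le1/2$; then $f(\beta)\le\tfrac12 f(\beta/\eta)$ for $\beta\le\eta$, hence $f(\eta^k)\le2^{-k}$ and $f(\beta)\lesssim_\times\beta^\alpha$ with $\alpha=\log2/\log(1/\eta)$. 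Specializing $\Sigma$ to an affine hyperplane yields~\eqref{absolutelydecaying}, and combined with the Federer property this shows $\mu$ is absolutely friendly.

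The crux is the input $\mu(\Sigma)=0$ for every generalized sphere $\Sigma$; this is exactly where the hypothesis that $G$ preserves no generalized sphere must enter, and it is genuinely nontrivial when $\delta\le d-1$ (when $\delta>d-1$ it is immediate, since $\HD(\Sigma)=d-1<\delta$ while $\mu$ is $\delta$-regular). I would argue by contradiction: if $\mu(\Sigma\cap\Lambda)>0$, pick a density point $\xx$ of $\mu\given\Sigma$, apply the rescaling principle along a sequence $\rho_n\searrow0$, and pass to a subsequential weak limit $\nu$ of the normalized pushforwards $(g_n)_*[\mu\given B(\xx,\rho_n)]/\mu(B(\xx,\rho_n))$; the density-point property forces $\nu$ to be supported on a single generalized sphere, while the homogeneity of $\Lambda$ under the convex-cocompact action (again the shadow lemma) forces $\nu$ to be comparable to a restriction of $\mu$, so $\Lambda$ would lie in a generalized sphere, and then $G$-invariance of $\Lambda$ --- after reducing by conjugation to the case that $\Lambda$ is not contained in any lower-dimensional generalized sphere --- would force $G$ to preserve that sphere, a contradiction. (Alternatively one may invoke ergodicity of the $G$-action on $\Lambda$ with respect to the measure class of $\mu$.) A secondary technical point is the rescaling principle itself: the uniform conformal-distortion bound on $B(\xx,2\rho)$, and the fact that $g(\Sigma)$ may be a generalized sphere of any radius --- nearly flat or nearly degenerate --- so that the unit-scale estimate really must be proved for the whole class; both are handled by standard M\"obius distortion estimates, together with the trivial observation that a sphere $\Sigma$ of radius $\ll\beta$ has $\thickvar{\Sigma}{\beta}$ contained in a ball of radius $\lesssim_\times\beta$ and hence $\mu$-measure $\lesssim_\times\beta^\delta$.
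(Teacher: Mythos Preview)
Your argument is correct, and its skeleton matches the standard approach (in particular that of \cite{StratmannUrbanski1}, which is what the paper cites for this result): use cocompactness of the convex core to rescale any ball $B(\xx,\rho)$, $\xx\in\Lambda$, to a unit-scale ball with uniformly bounded conformal distortion; observe that M\"obius maps send generalized spheres to generalized spheres, so the relevant decay function $f(\beta)$ must be defined over $\Sigma\in\Sph$ rather than $\Hyp$; prove a self-improving inequality for $f$; seed it with the qualitative input $f(\beta)\to 0$, which in turn rests on $\mu(\Sigma)=0$ for every $\Sigma\in\Sph$ and a compactness argument over $\Sph$; and iterate to get power decay. The paper does not give an independent proof of Theorem~\ref{theoremCCK} --- it is quoted from the literature --- but its Section~\ref{sectionGF} proves the more general geometrically finite statements (Theorems~\ref{theoremGFfriendly} and \ref{theoremGFQD}) via the same two-step mechanism: a ``global decay'' lemma $\mu(\thickvar\LL\beta)\lesssim_\times\beta^\alpha$ (Lemma~\ref{lemmaglobaldecay}), obtained by iterating the one-step estimate $\mu(\thickvar\LL{\epsilon\beta})\le\lambda\,\mu(\thickvar\LL\beta)$ (Lemma~\ref{lemmapreglobaldecay}), together with a rescaling to unit scale using an isometry $g\in G$ close to the geodesic ray. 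In the convex-cocompact case the paper's reduction chain collapses (there are no cusps, so Reduction~\ref{reductionP0} is simply the rescaling principle you describe), and the two arguments become essentially the same.

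The one place where a reader will want more than you give is the implication $\mu(\Sigma)>0\Rightarrow\Lambda$ lies in a generalized sphere. Your density-point-plus-rescaling outline is the right idea, but the final step --- from ``$\Lambda\cap B_\infty\subset\Sigma_\infty$ for one unit-scale ball'' to ``$\Lambda\subset\Sigma_\infty$ globally'' --- needs the observation that, after first passing to the smallest generalized sphere containing $\Lambda$ (which $G$ must preserve, contradicting the hypothesis unless it is all of $\partial\H$), any two codimension-one generalized spheres agreeing on a relatively open piece of $\Lambda$ must coincide; one then propagates along overlapping $G$-translates of $B_\infty$ using minimality of the $G$-action on $\Lambda$. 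The paper is equally terse at the analogous point (the ``zooming argument'' after \eqref{ETSgeometricreduction}), so this is not a gap relative to the source, but it is the step most deserving of a sentence or two of elaboration.
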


If this theorem is the analogue of Theorem \ref{theoremconformalfinite}, then the analogue of Theorem \ref{theoremgibbsfinite} should concern the Patterson densities of H\"older continuous Gibbs cocycles as defined in \cite[p.3]{PPS}. In fact, it is not hard to see that the proof of \cite{StratmannUrbanski1} generalizes to this setting; we omit the details, as this theorem is not relevant to the overall goal of this paper.

A more difficult variation of Theorem \ref{theoremCCK} involves moving to a different row of Sullivan's dictionary (at least according to the table in \cite[p.4]{DSU_rigidity}): the setting of geometrically finite groups with parabolic points. The reason for this difficulty is that such measures are in general not absolutely friendly, because if $p$ is a parabolic point of a geometrically finite group $G$ with limit set $\Lambda$, and $\LL$ is an affine hyperplane containing the tangent plane of $\Lambda$ at $p$, then small neighborhoods of $\LL$ contain all the mass of small balls centered around $p$. This makes the Patterson--Sullivan measures of geometrically finite groups a good candidate for friendliness, since the quantity $\|d_\LL\|_{\mu,B}$ appearing in \eqref{decaying} is often much smaller than the radius of $B$. And indeed, we prove below:

\begin{theorem}[Proven in Section \ref{sectionGF}]
\label{theoremGFfriendly}
Let $G$ be a geometrically finite group of M\"obius transformations of $\R^d$ (see \6\ref{subsectionGFgroups}) which does not preserve any generalized sphere. Then the Patterson--Sullivan measure $\mu$ of $G$ is friendly. Moreover, $\mu$ is absolutely friendly if and only if every cusp of $G$ has maximal rank (see \6\ref{subsectionGFgroups}).
\end{theorem}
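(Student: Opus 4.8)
The plan is to combine three inputs: the $\delta$-conformality and $G$-ergodicity of the Patterson--Sullivan measure $\mu$ (here $\delta=\HD(\Lambda)$, where $\Lambda$ is the limit set; $\mu$ is non-atomic, and $2\delta>k_i$ for each cusp rank $k_i$); the structure theory of geometrically finite groups --- finitely many cusps, represented by bounded parabolic points $p_1,\dots,p_\ell$ of ranks $k_1,\dots,k_\ell\le d$, together with a standard packing of precisely invariant horoballs; and the Global Measure Formula of Stratmann--Velani, which for $\xi\in\Lambda$ and a unit-speed geodesic ray $\xi_t$ aimed at $\xi$ states that $\mu\big(B(\xi,e^{-t})\big)\asymp e^{-t\delta}e^{-\rho(\xi,t)(\delta-k(\xi,t))}$, where $\rho(\xi,t)$ is the depth of $\xi_t$ inside a horoball and $k(\xi,t)$ the rank of the corresponding cusp. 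The Federer property of $\mu$ is immediate from this formula, since replacing $\rho$ by $2\rho$ changes $t$ and the depth each by $O(1)$.

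The nonplanarity of $\mu$ --- $\mu(\LL)=0$ for every $\LL\in\Hyp$ --- follows from a now-standard argument using the $G$-ergodicity and non-atomicity of $\mu$ and the hypothesis that $G$ preserves no generalized sphere, in the spirit of the convex-cocompact case (Theorem~\ref{theoremCCK}). The same circle of ideas, carried out over the compact thick part of the convex core, supplies the uniform transversality we will need below: no piece of $\Lambda$ lying over the thick part can be pushed arbitrarily close to a hyperplane at its own scale. (Where $\delta>d-1$ this would already follow from Theorem~\ref{theoremexactdim}, but in general $\delta$ may be small.)

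The core is the decaying-and-nonplanar estimate \eqref{decayingprime}. Since an element of $G$ has bounded distortion on a ball once one has zoomed in to that ball's scale, the estimate for an arbitrary $B=B(\xx,\rho)$ reduces --- by decomposing the geodesic ray $[o,\xx)$ into thick-part segments and cusp excursions and summing the corresponding contributions --- to two model cases, which must be run simultaneously: (i) $B$ is $G$-comparable to a unit ball centered in the thick part, handled by subdividing $B$ into smaller balls and recursing, using the transversality above to control the part of $B$ that does not enter a cusp; and (ii) $B=B(p_i,\rho)$ (or an off-center ball inside a horoball shadow of $p_i$) near a cusp $p_i$ of rank $k=k_i$. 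In case (ii) the crucial geometric fact is that, in a chart placing $p_i$ at the origin, $\Lambda$ is tangent \emph{to second order} to the rank-$k$ plane $V=V_{p_i}$: one has $\Lambda\cap B(p_i,\rho)\subset\thickvar{V}{C\rho^2}$, while $G_{p_i}$-invariance produces points of $\Lambda$ at transverse distance $\asymp\rho^2$ from $V$. Hence $\|d_\LL\|_{\mu,B}\asymp\rho$ when $\LL$ makes a bounded angle with $V$, and $\|d_\LL\|_{\mu,B}$ decreases to $\asymp\rho^2$ as $\LL$ approaches a hyperplane containing $V$ --- which is exactly why $\mu$ is decaying but, as the ``moreover'' records, not absolutely decaying. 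Writing $B(p_i,\rho)\cap\Lambda=\bigsqcup_{|v|\gtrsim1/\rho}g_v(F)$ for the $G_{p_i}$-orbit of a bounded fundamental piece $F$ --- so that $\operatorname{diam}g_v(F)\asymp|v|^{-2}$ and $\mu\big(g_v(F)\big)\asymp|v|^{-2\delta}$, whence $\mu\big(B(p_i,\rho)\big)\asymp\rho^{2\delta-k}$, consistent with the Global Measure Formula --- one bounds $\mu\big(\thickvar{\LL}{\beta\|d_\LL\|_{\mu,B}}\cap g_v(F)\big)$ piece by piece: by the thick-part estimate of case (i) when $\beta\|d_\LL\|_{\mu,B}<\operatorname{diam}g_v(F)$, and by the trivial bound $\mu\big(g_v(F)\big)$ otherwise. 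Summing the resulting geometric series --- which converges precisely because $2\delta>k$ --- yields \eqref{decayingprime} with an exponent $\alpha$ of order $\delta-k/2$.

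I expect the main obstacle to be this last summation: making it uniform over \emph{all} hyperplanes $\LL$ (every possible angle with $V$) and over the nested cusp structure --- a fundamental piece $F$ can itself approach other cusps, so the thick-part estimate feeding case (ii) is not literally ``Ahlfors-regular'' --- which is what forces cases (i) and (ii) to be proved by a single argument. As for the ``moreover'': if some cusp $p$ has rank $k<d$, pick $\LL\in\Hyp$ with $\LL\supset V_p$; then $\Lambda\cap B(p,\rho)\subset\thickvar{\LL}{C\rho^2}$, so $\mu\big(\thickopenvar{\LL}{\beta\rho}\cap B(p,\rho)\big)=\mu\big(B(p,\rho)\big)>0$ whenever $\beta>C\rho$, and substituting $\beta=2C\rho$ into \eqref{absolutelydecaying} forces $1\le C_1(2C\rho)^\alpha$ for all small $\rho>0$, which is absurd; hence $\mu$ is not absolutely decaying, so not absolutely friendly. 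Conversely, if every cusp has maximal rank $d$, then no hyperplane contains any $V_{p_i}=\R^d$, the second-order obstruction disappears, $\|d_\LL\|_{\mu,B(p_i,\rho)}\asymp\rho$ for every $\LL$, and the summation of case (ii) goes over verbatim to the absolute-decay bound \eqref{absolutelydecaying}; combined with absolute decay on the thick part (again from the transversality), $\mu$ is then absolutely decaying, and being Federer, absolutely friendly.
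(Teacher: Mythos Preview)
Your plan is correct and follows essentially the same architecture as the paper's proof: reduce to thick-part and cusp model cases via conformality, handle the cusp case by decomposing into $G_{p_i}$-orbit pieces of size $\asymp |v|^{-2}$ and mass $\asymp |v|^{-2\delta}$, and sum using $2\delta>k$; the ``moreover'' is handled exactly as you say (the paper gives the same tangent-hyperplane argument for the easy direction).

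Two technical differences are worth flagging. First, where you sum directly over orbit pieces $g_v(F)$, the paper packages this sum as a comparison of $\mu$ with an explicit smooth measure $\lambda_\eta$ on the translation plane $\PP_\eta$, defined by $\dee\lambda_\eta(\xi)=\Dist(\eta,\xi)^{2\delta-2k}\dee\lambda_{\PP}(\xi)$; this transforms correctly under $G_\eta$ and reduces the final estimate to friendliness of Lebesgue measure on $\PP$ (which is pulled from \cite{KLW}). Second, for your ``thick-part estimate of case (i)'' applied to the small pieces $g_v(F)$, the paper does not rerun a recursion but instead invokes the already-established Global Decay Lemma $\mu(\thickvar\LL\beta)\lesssim\beta^\alpha$ as a black box; this is what makes the simultaneous treatment of (i) and (ii) clean. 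The paper also proves the slightly stronger ``friendliness to spheres'' ($\LL\in\Sph$ rather than $\LL\in\Hyp$), which comes for free once one is working in the spherical metric. None of this changes the substance of your argument, but it may simplify the bookkeeping you anticipate as the main obstacle.
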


Moving to the third column of the dictionary, rational functions, it seems that the analogue of Theorems \ref{theoremsimilarityIFS}-\ref{theoremCCK} has not been explicitly stated in the literature before, but can be proven using the techniques of \cite{Urbanski} together with a rigidity result of W.~Bergweiler, A.~E.~Eremenko, and S.~J.~van~Strien \cite{BergweilerEremenko, EremenkoVanstrien}:

\begin{theorem}
\label{theoremHrational}
Let $T:\what\C\to\what\C$ be a hyperbolic (i.e. expansive on its Julia set) rational function, let $\phi:\what\C\to\R$ be a H\"older continuous potential function, and let $\mu_\phi$ be the corresponding equilibrium state. If $\Supp(\mu_\phi)$ is not contained in any generalized sphere, then $\mu_\phi$ is absolutely friendly.
\end{theorem}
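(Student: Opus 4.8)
\emph{Proof proposal.} The plan is to follow the strategy underlying Theorem~\ref{theoremgibbsfinite} (due to the fourth-named author \cite{Urbanski}), and to insert the rigidity theorem of Bergweiler, Eremenko, and van~Strien \cite{BergweilerEremenko,EremenkoVanstrien} at the one point where the ``nonplanarity'' of the system must be established. After a Möbius change of coordinates --- a routine reduction, legitimate because the hypothesis that $\Supp(\mu_\phi)$ lies on no generalized sphere is Möbius-invariant --- we may assume $J(T)$ is a bounded subset of $\C$. Since $T$ is hyperbolic, the inverse branches of $T^n$ on a fixed neighborhood of $J(T)$ have uniformly bounded distortion (Koebe), $|T'|$ is bounded above and below there, $T\given J(T)$ is topologically exact, and $\mu_\phi$ is the unique equilibrium state of $\phi$: it is fully supported on $J(T)$ and satisfies the Gibbs property $\mu_\phi(B(\xx,\rho))\asymp_\times\exp(S_n\phi(\xx)-nP)$, where $P$ is the topological pressure and $n=n(\xx,\rho)$ is the first time at which $T^n$ expands $B(\xx,\rho)$ to size $\asymp 1$. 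Two consequences follow. First, doubling $\rho$ changes $n(\xx,\rho)$ by a bounded amount (by the two-sided bound on $|T'|$), so $\mu_\phi$ is Federer --- this is where the present setting is \emph{better} than that of Theorem~\ref{theoremgibbsfinite}, whose statement must assume it. Second, for any $B(\xx,\rho)$ the map $T^{n(\xx,\rho)}$ carries it with bounded distortion onto a piece of $J(T)$ of definite size, and a Jacobian computation from the Gibbs property shows that the renormalized push-forward $(T^n)_*[\mu_\phi\given B(\xx,\rho)]/\mu_\phi(B(\xx,\rho))$ is comparable, with constants depending only on $T$ and $\phi$, to the restriction of $\mu_\phi$ to that piece. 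It thus suffices to prove $\mu_\phi$ is absolutely decaying, as it is then absolutely friendly by definition.

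To prove absolute decay I would run the conformal zooming of \cite{Urbanski}. Under $T^{n(\xx,\rho)}$ an affine hyperplane meeting $B(\xx,\rho)$ becomes a $C^1$ curve of uniformly bounded geometry, so zooming reduces the power law $\mu_\phi(\thickvar{\LL}{\beta\rho}\cap B)\lesssim_\times\beta^\alpha\mu_\phi(B)$ --- via a standard iteration that covers $\thickvar{\LL}{\beta\rho}\cap B$ by balls at a smaller scale and invokes the Federer property --- to a single uniform estimate: $\sup\{\nu(\thickvar{\Gamma}{1/N}\cap B(\xx,1))/\nu(B(\xx,1))\}\to 0$ as $N\to\infty$, the supremum over the weak-$*$ compact family of renormalized pieces $\nu$ and over $C^1$ curves $\Gamma$ of bounded geometry. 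A limiting $\nu$ violating this would --- being comparable to $\mu_\phi$ restricted to a piece of $J(T)$ --- charge some $C^1$ curve; hence it is enough to show $\mu_\phi(\Gamma)=0$ for every $C^1$ curve $\Gamma\subset\C$.

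That last assertion is the crux, and the only place the hypothesis is used. Suppose $\mu_\phi(\Gamma)>0$ for a $C^1$ curve $\Gamma$, and let $\xx_0$ be a $\mu_\phi$-density point of $\Gamma\cap J(T)$ (which exists since $\mu_\phi$ is Federer). Zooming in at $\xx_0$ by $T^{n_k}$, $n_k\to\infty$, the renormalized push-forwards converge (along a subsequence) weak-$*$ to a measure comparable to $\mu_\phi$ restricted to a definite-size piece $P=B(\xx_\infty,c)\cap J(T)$, and by the density-point property this limit gives full mass to $\Gamma':=\lim_k T^{n_k}(\Gamma\cap B(\xx_0,r_k))$, a $C^1$ curve by Koebe; hence $P\subset\Gamma'$. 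Now let $w\in J(T)$. By topological exactness there are $z\in P$ and $m$ with $T^m(z)=w$. Since $T$ is hyperbolic, the critical set of $T^m$ is disjoint from $J(T)$ and the critical values of $T^m$ lie in the Fatou set; in particular $w$ is a regular value, so there is a local inverse branch $h$ of $T^m$ near $w$ with $h(w)=z$, and after shrinking its domain $W$ we may assume $h(W)$ misses the critical set of $T^m$. Then $h$ is univalent on $W$, $h(W\cap J(T))\subset P\subset\Gamma'$, hence $W\cap J(T)\subset T^m(\Gamma'\cap h(W))$, an embedded $C^1$ arc. Thus $J(T)$ is, near each of its points, contained in a $C^1$ arc, so by the rigidity theorem of \cite{BergweilerEremenko,EremenkoVanstrien} it lies on a line or a circle --- a generalized sphere --- contradicting the hypothesis. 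Hence $\mu_\phi(\Gamma)=0$ for every $C^1$ curve; with the Federer property this makes $\mu_\phi$ absolutely friendly. (The same blow-up argument, applied to affine hyperplanes, also shows $J(T)$ is uniformly nonplanar at every scale, consistent with the conclusion being absolute rather than mere friendliness.) I expect this blow-up/rigidity step to be the essential difficulty; the surrounding zooming-and-compactness bookkeeping --- uniform distortion bounds, the precompact families, the Besicovitch-type iteration --- is routine though not short, and parallels \cite{Urbanski}.
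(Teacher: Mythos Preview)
Your proposal is correct and follows the same strategy as the paper --- Federer from the Gibbs/hyperbolic structure, absolute decay via \cite{Urbanski}-style conformal zooming, with the Bergweiler--Eremenko--van Strien rigidity supplying the nonplanarity input. The paper's presentation is more modular: it first builds a Markov partition for $T\given J$, views the inverse branches on the partition pieces as a finite graph directed system, and then invokes (the proof of) \cite[Theorem 1.5]{Urbanski} as a near-black-box, applying rigidity directly in the form ``$J$ not in a generalized sphere $\Rightarrow$ $J$ not in any proper real-analytic submanifold'', which is precisely \cite{Urbanski}'s irreducibility hypothesis; your density-point-and-spread step (from ``$\mu_\phi$ charges a curve'' to ``$J$ is locally contained in an arc'') is essentially the internal argument of that black box, unpacked rather than bypassed. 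One small caution: the rigidity results as cited in the paper are stated for real-analytic submanifolds, so either observe that your limiting curves can in fact be taken real-analytic (the normalized inverse branches form a normal family of holomorphic maps, so limits of conformal images of lines remain real-analytic arcs), or verify that the versions you cite cover the $C^1$ case.
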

\begin{proof}[Proof sketch]
Since $T$ is hyperbolic, there is some metric on the Julia set $J$ with respect to which $T$ is distance expanding. Thus by \cite[Theorem 4.5.2]{PrzytyckiUrbanski}, the map $T:J\to J$ has Markov partitions of arbitrarily small diameter. The inverse branches of $T$ restricted to the elements of such a partition forms a graph directed system in the sense of \cite{MauldinUrbanski2}, except that the cone condition is not necessarily satisfied. Now the equilibrium state $\mu_\phi$ is also an equilibrium state of this graph directed system with respect to the potential function $\phi$. The proof of \cite[Theorem 1.5]{Urbanski} shows that if $\mu_\phi$ is Federer and $\Supp(\mu_\phi)$ is not contained in any proper real-analytic submanifold of $\what\C$, then $\mu_\phi$ is absolutely friendly. (The cone condition is not used in the proof of \cite[Theorem 1.5]{Urbanski}, and the argument is easily extended from the realm of CIFSes to that of graph directed systems.)  But the fact that $\mu_\phi$ is an equilibrium state for $T$ implies that $\mu_\phi$ is Federer (e.g. by modifying the proof of \cite[Theorem A]{Rivera}). And if $\Supp(\mu_\phi)$ is not contained in any generalized sphere, then by \cite[Corollary 1 and Theorem 2]{EremenkoVanstrien} (see also \cite[Theorem 2]{BergweilerEremenko}), $\Supp(\mu_\phi)$ is not contained in any proper real-analytic submanifold of $\what\C$. This completes the proof.
\end{proof}

It seems that this is a more or less complete list of those fractal measures which, before the invention of the quasi-decay condition, were known to be extremal. The results are restricted to one or possibly two rows of Sullivan's dictionary, corresponding to extremely rigid hypotheses (finiteness of alphabet, convex-cocompactness, hyperbolicity) on the class of dynamical systems considered. In the authors' view, these examples represent more or less the full scope of the friendliness and absolute friendliness conditions: rigid hypotheses were required because the friendliness and absolute friendliness hypotheses are themselves very rigid. By contrast, the quasi-decaying condition is quite flexible and exhibits a much broader range of examples; let us now get a feel for what these examples are.

\subsection{Quasi-decay: examples and counterexamples}
\label{subsectionexamples}
The simplest examples of measures which are quasi-decaying but not necessarily Federer come from applying Theorem \ref{theoremexactdim} to measures invariant under one-dimensional dynamical systems. For these measures, exact dimensionality is known under fairly broad assumptions, and the dimension of exact dimensional measures can be computed directly:

\begin{theorem}[{\cite[Theorem 1]{Hofbauer}}]
\label{theoremhofbauer}
Let $T:[0,1]\to[0,1]$ be a piecewise monotonic transformation whose derivative has bounded $p$-variation for some $p > 0$. Let $\mu$ be a measure on $[0,1]$ which is ergodic and invariant with respect to $T$. Let $h(\mu)$ and $\chi(\mu)$ denote the entropy and Lyapunov exponent of $\mu$, respectively. If $\chi(\mu) > 0$, then $\mu$ is exact dimensional of dimension
\[
\delta(\mu) = \frac{h(\mu)}{\chi(\mu)}\cdot
\]
\end{theorem}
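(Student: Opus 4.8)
The plan is to compute the local dimension of $\mu$ at a $\mu$-typical point $x$ by comparing $\mu\bigl(B(x,\rho)\bigr)$ with the diameter and $\mu$-mass of the dynamical cylinders of the monotonicity partition, and then to transfer this comparison from cylinders to balls. Write $\mathcal{Z}$ for the (finite, or at most countable) partition of $[0,1]$ into maximal intervals of monotonicity of $T$, set $\mathcal{Z}^n := \bigvee_{k=0}^{n-1} T^{-k}\mathcal{Z}$, and let $\mathcal{Z}^n(x)$ be the atom of $\mathcal{Z}^n$ containing $x$. The bounded $p$-variation hypothesis enters through the classical uniform bounded distortion estimate for the branches of $T^n$: there is a constant $C\geq 1$, independent of $n$, such that $|(T^n)'(y)|/|(T^n)'(z)|\in[C^{-1},C]$ whenever $y,z$ lie in a common atom of $\mathcal{Z}^n$; this follows by telescoping $\log\bigl(|(T^n)'(y)|/|(T^n)'(z)|\bigr)=\sum_{k=0}^{n-1}\bigl(\log|T'(T^ky)|-\log|T'(T^kz)|\bigr)$ and estimating the $k$-th summand by the $p$-variation of $T'$ over the image interval $T^k(\mathcal{Z}^n(x))$, whose lengths are controlled by the expansion of $T^n$. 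An immediate consequence, by the mean value theorem, is that $\operatorname{diam}\bigl(\mathcal{Z}^n(x)\bigr)\asymp_\times |(T^n)'(x)|^{-1}\,|T^n(\mathcal{Z}^n(x))|$; in particular, since $|T^n(\mathcal{Z}^n(x))|\leq 1$, we have $\operatorname{diam}\bigl(\mathcal{Z}^n(x)\bigr)\lesssim_\times |(T^n)'(x)|^{-1}$.

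Next I would feed in two ergodic theorems. By Birkhoff's theorem applied to $\log|T'|$ — which lies in $L^1(\mu)$, since $T'$ is bounded (so $\log^{+}|T'|$ is bounded) and $\chi(\mu)>0$ forces $\log^{-}|T'|$ to be integrable — one gets $\tfrac1n\log|(T^n)'(x)|\to\chi(\mu)$ for $\mu$-a.e.\ $x$; combined with the previous paragraph, $\operatorname{diam}\bigl(\mathcal{Z}^n(x)\bigr)\to 0$ for $\mu$-a.e.\ $x$, so $\mathcal{Z}$ is a one-sided generator modulo $\mu$ and hence $h_\mu(T,\mathcal{Z})=h(\mu)$ by Kolmogorov--Sinai. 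The Shannon--McMillan--Breiman theorem applied to $\mathcal{Z}$ (whose entropy $H_\mu(\mathcal{Z})$ is finite, automatically so when $\mathcal{Z}$ is finite) then gives $-\tfrac1n\log\mu\bigl(\mathcal{Z}^n(x)\bigr)\to h(\mu)$ for $\mu$-a.e.\ $x$. Combining Birkhoff's theorem with the distortion estimate and the trivial bound $|T^n(\mathcal{Z}^n(x))|\leq 1$, one obtains $\tfrac1n\log\operatorname{diam}\bigl(\mathcal{Z}^n(x)\bigr)\to-\chi(\mu)$: the bound $\liminf_n \tfrac1n\log\bigl(1/\operatorname{diam}(\mathcal{Z}^n(x))\bigr)\geq\chi(\mu)$ is immediate, and the matching upper bound requires in addition that the image lengths $|T^n(\mathcal{Z}^n(x))|$ not decay exponentially along a $\mu$-typical orbit. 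Granting this, $\log\mu\bigl(\mathcal{Z}^n(x)\bigr)\big/\log\operatorname{diam}\bigl(\mathcal{Z}^n(x)\bigr)\to h(\mu)/\chi(\mu)$ for $\mu$-a.e.\ $x$.

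The remaining step — which I expect to be the main obstacle — is to upgrade this from the discrete sequence of cylinder scales to all scales $\rho\searrow 0$, that is, to establish the honest limit \eqref{exactdim} with $\delta(\mu)=h(\mu)/\chi(\mu)$. The difficulty is that $\mathcal{Z}^{n+1}(x)$ can be far shorter than $\mathcal{Z}^n(x)$ when $T^nx$ lies near $\partial\mathcal{Z}$, so the cylinder diameters need not form a coarsely geometric sequence and $\mu\bigl(B(x,\rho)\bigr)$ could a priori oscillate between consecutive cylinder scales; what must be proven is that a $\mu$-typical orbit approaches $\partial\mathcal{Z}$ only sub-exponentially fast (this also supplies the missing ingredient from the previous paragraph, since the endpoints of $T^n(\mathcal{Z}^n(x))$ lie in $\bigcup_{i\leq n}T^i(\partial\mathcal{Z})$). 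Once this is in hand, for each small $\rho$ one locates an index $n\asymp -\tfrac1{\chi(\mu)}\log\rho$ and a bounded cluster of adjacent $\mathcal{Z}^{n+1}$-atoms around $x$ trapped between $B(x,\rho)$ and a bounded cluster of adjacent $\mathcal{Z}^n$-atoms, so that $-\log\mu\bigl(B(x,\rho)\bigr)$ is squeezed between two quantities each of the form $\bigl(h(\mu)+o(1)\bigr)\cdot\bigl(-\tfrac1{\chi(\mu)}\log\rho\bigr)$; dividing by $\log\rho$ and letting $\rho\to 0$ gives \eqref{exactdim}. To make the sub-exponential-approach statement rigorous I would take one of two routes: either (i) a Borel--Cantelli estimate bounding $\sum_n\mu\bigl(\{\,\dist(\cdot,\partial\mathcal{Z})<e^{-\varepsilon n}\,\}\bigr)$ together with a thermodynamic (pressure) bound on the total $\mu$-mass of the $\mathcal{Z}^n$-atoms with exponentially short images, or (ii) lifting $(T,\mu)$ to its Hofbauer Markov extension $(\hat T,\hat\mu)$, where $\hat T$ is piecewise Markov — so that images of cylinders are full partition elements and the geometric control above is routine — proving $\hat\mu$ exact dimensional with the same formula, and pushing the conclusion back down via the canonical projection, which is an isometry on each level of the tower and therefore preserves local dimensions $\mu$-a.e. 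In either case the combinatorial and geometric control of $\mu$ near the partition boundaries and their forward images is the crux; the distortion estimate and the two ergodic theorems are then essentially bookkeeping.
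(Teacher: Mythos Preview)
The paper does not prove this theorem at all: it is stated as a citation of \cite[Theorem 1]{Hofbauer} and used as a black box to supply examples of exact dimensional measures (see the discussion immediately following the statement). There is therefore no ``paper's own proof'' to compare against.

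That said, your sketch is a reasonable outline of how results of this type are proven, and in particular your route (ii) --- lifting to the Hofbauer Markov extension --- is essentially the strategy Hofbauer himself uses. You have correctly identified the genuine difficulty: controlling how close a $\mu$-typical orbit comes to the boundary $\partial\mathcal{Z}$, which governs both the subexponential decay of $|T^n(\mathcal{Z}^n(x))|$ and the passage from cylinder scales to arbitrary scales. One caution: your claim that $\log|T'|\in L^1(\mu)$ because ``$\chi(\mu)>0$ forces $\log^{-}|T'|$ to be integrable'' is circular as stated --- the Lyapunov exponent is \emph{defined} as $\int\log|T'|\,\dee\mu$, so its positivity presupposes integrability rather than implying it; you need to argue integrability separately (or take it as part of the hypothesis, as is implicit in the statement that $\chi(\mu)$ exists and is positive).
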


Note that if $h(\mu) > 0$, then Ruelle's inequality\Footnote{A proof valid in this context can be found in \cite[Theorem 7.1]{BarrioJimenez}.} implies that $\chi(\mu) > 0$, so the above result applies and gives $\delta_\mu > 0 = d - 1$, so $\mu$ is quasi-decaying.\Footnote{The inequality $\chi(\mu) < \infty$ follows from the hypothesis that $T'$ has bounded $p$-variation, which implies that $T'$ is bounded.} On the other hand, in general there is no reason to believe that measures satisfying the hypotheses of Theorem \ref{theoremhofbauer} will be either Federer or decaying.

We should also note that Theorem \ref{theoremhofbauer} provides examples of dynamical measures which are not quasi-decaying as well. Namely, if $\mu$ satisfies the hypotheses of Theorem \ref{theoremhofbauer} and $h(\mu) = 0$, then $\mu$ is exact dimensional of dimension $0$, and the following theorem shows that $\mu$ is not quasi-decaying:
\begin{theorem}[Proven in Section \ref{sectionmiscellaneous2}]
\label{theoremdimzero}
Any exact dimensional measure of dimension 0 is not quasi-decaying.
\end{theorem}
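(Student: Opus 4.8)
The plan is to show directly that if $\mu$ is exact dimensional of dimension $0$, then the quasi-decay inequality \eqref{QDwithE} must fail for every choice of relative set $\borel$ of positive measure, which by Definition \ref{definitionQD} is enough to rule out quasi-decay. Recall that quasi-decay relative to $\borel$ requires that for $\mu$-a.e.\ $\xx\in\borel$ and every $\gamma > 0$ there exist $C_1,\alpha > 0$ with $\mu(\thickvar\LL{\beta\rho}\cap B(\xx,\rho)\cap\borel) \leq C_1\beta^\alpha\mu(B(\xx,\rho))$ for all $0 < \rho\leq 1$, $0 < \beta\leq\rho^\gamma$, and $\LL\in\Hyp$. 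Since taking $\beta\to\rho^\gamma$ and $\LL$ an arbitrary hyperplane through $\xx$ still yields a thickening of width $\rho^{1+\gamma}$, the key point is to find, for exact dimensional measures of dimension $0$, balls $B(\xx,\rho)$ whose mass is almost entirely concentrated near such a hyperplane at a scale much finer than $\rho$.

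First I would exploit exact dimensionality of dimension $0$: for $\mu$-a.e.\ $\xx$ we have $\mu(B(\xx,\rho)) = \rho^{o(1)}$ as $\rho\searrow 0$. This says that the mass of $B(\xx,\rho)$ decays subpolynomially, so in particular for any fixed $\gamma$ and any $\epsilon > 0$ we have $\mu(B(\xx,\rho^{1+\gamma})) \geq \rho^{\epsilon}\,\mu(B(\xx,\rho))$ for all sufficiently small $\rho$ (along a sequence $\rho\searrow 0$, or in fact for all small $\rho$ once $\epsilon$ is fixed, by comparing $\log\mu(B(\xx,\rho^{1+\gamma}))/\log\rho^{1+\gamma}$ and $\log\mu(B(\xx,\rho))/\log\rho$, both of which tend to $0$). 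Now take $\LL$ to be \emph{any} affine hyperplane passing through $\xx$; then $B(\xx,\rho^{1+\gamma})\subset \thickvar\LL{\rho^{1+\gamma}}\cap B(\xx,\rho)$, so with $\beta := \rho^\gamma \leq \rho^\gamma$ we get
\[
\mu\big(\thickvar\LL{\beta\rho}\cap B(\xx,\rho)\big) \;\geq\; \mu\big(B(\xx,\rho^{1+\gamma})\big) \;\geq\; \rho^{\epsilon}\,\mu\big(B(\xx,\rho)\big).
\]
On the other hand, the quasi-decay inequality would force this to be $\leq C_1\beta^\alpha\mu(B(\xx,\rho)) = C_1\rho^{\gamma\alpha}\mu(B(\xx,\rho))$. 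Choosing $\epsilon < \gamma\alpha$ gives a contradiction as $\rho\searrow 0$ along a sequence where the subpolynomial bound holds. The one subtlety is the intersection with $\borel$: for this I would first restrict to a subset $\borel'\subset\borel$ of positive $\mu$-measure on which the exact-dimensionality convergence in \eqref{exactdim} is uniform (Egorov), and note that for $\xx$ a density point of $\borel'$ we have $\mu(B(\xx,\rho)\cap\borel') \asymp_\times \mu(B(\xx,\rho))$ along a sequence of radii, which lets the $\borel$-restricted lower bound above go through up to a harmless constant.

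The main obstacle — really the only place any care is needed — is making the ``along a sequence of radii'' and density-point arguments interact cleanly, since quasi-decay at $\xx$ is a statement for \emph{all} small $\rho$, so it suffices to contradict it along any sequence $\rho_k\searrow 0$; I would fix such a sequence adapted simultaneously to the Egorov set $\borel'$, to the density-point property of $\xx$ in $\borel'$, and to the inequality $\mu(B(\xx,\rho^{1+\gamma}))\geq\rho^\epsilon\mu(B(\xx,\rho))$. A cosmetic point is that Definition \ref{definitionQD} also demands $B = B(\xx,\rho)$ rather than requiring $B\subset U$, so there is no domain restriction to worry about here. Since the only property of $\mu$ used is exact dimensionality of dimension $0$ (via \eqref{exactdim}), the argument handles all such measures at once, and in particular covers the zero-entropy invariant measures of Theorem \ref{theoremhofbauer} and, by the footnoted remark citing \cite{BPS}, their higher-dimensional analogues.
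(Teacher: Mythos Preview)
Your proposal is correct and follows essentially the same approach as the paper's proof: for any $\borel$ of positive measure and $\mu$-a.e.\ $\xx\in\borel$, use exact dimensionality of dimension $0$ to get $\mu(B(\xx,\rho^{1+\gamma}))/\mu(B(\xx,\rho)) = \rho^{o(1)}$, note that $B(\xx,\rho^{1+\gamma})\subset\thickvar\LL{\beta\rho}\cap B(\xx,\rho)$ for any hyperplane $\LL\ni\xx$ and $\beta=\rho^\gamma$, and contradict the bound $C_1\beta^\alpha$. The paper does exactly this with $\gamma=1$ (hence $\rho^2$ in place of your $\rho^{1+\gamma}$), and handles the intersection with $\borel$ simply by invoking the Lebesgue differentiation theorem [Part I, Theorem 3.6] at $\xx$, which gives $\mu(B(\xx,r)\cap\borel)/\mu(B(\xx,r))\to 1$ for \emph{all} $r\searrow 0$---so your Egorov step and the attendant worry about ``along a sequence of radii'' are unnecessary complications, though harmless.
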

We note that the implication ($h(\mu) = 0$) \implies ($\mu$ exact dimensional of dimension 0) holds for higher-dimensional dynamical systems as well; see \cite[comment following Proposition 2]{BPS}.

Actually, it is not surprising that such measures are not quasi-decaying, because the class of such measures includes some measures which are not extremal. Specifically, if $T$ is the map $x\mapsto nx$ for some $n\geq 2$, then the following theorem proves the existence and genericity of $T$-invariant measures which are not extremal:

\begin{theorem}[Proven in Section \ref{sectionmiscellaneous2}]
\label{theoremliouville}
Let $T:X\to X$ be a hyperbolic toral endomorphism (cf. Definition \ref{definitionhyperbolictoral}), where $X = \R^d/\Z^d$. Let $\M_T(X)$ be the space of $T$-invariant probability measures on $X$. Then the set of measures which give full measure to the Liouville points is comeager in $\M_T(X)$.
\end{theorem}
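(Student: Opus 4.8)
The plan is to use a Baire category argument in the weak-* topology on $\M_T(X)$, which is compact and metrizable. Recall that a point $\xx\in X$ is a Liouville point if $\omega(\xx) = \infty$, i.e. for every $N$ there are infinitely many $\pp/q\in\Q^d$ with $\|\xx - \pp/q\| < q^{-N}$; equivalently $\xx\in\bigcap_N \bigcup_{q} \bigcup_{\pp}\thickopenvar{\{\pp/q\}}{q^{-N}}$, a countable intersection of open dense subsets of $X$, hence a dense $G_\delta$ and in particular comeager but Lebesgue-null. So the set $\Liouville$ of Liouville points is a Borel set, and I want to show $\{\mu\in\M_T(X):\mu(\Liouville) = 1\}$ is comeager in $\M_T(X)$.

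The first step is to reduce the statement to an approximation fact about periodic-orbit measures. Since $T$ is a hyperbolic toral endomorphism, periodic points are dense in $X$, and the convex hull of the atomic measures supported on periodic orbits is weak-* dense in $\M_T(X)$ (this is the standard ``every invariant measure is approximated by periodic measures'' phenomenon for hyperbolic systems; it follows from the specification property or Anosov closing lemma). The key point is that rational points of $X = \R^d/\Z^d$ are precisely the periodic points of $T$ whose period divides the multiplicative order of the integer matrix mod the relevant modulus — more simply, $\Q^d/\Z^d\subset X$ consists of periodic points of $T$ (since $T$ is a integer matrix, it permutes $(\tfrac1q\Z^d)/\Z^d$ for each $q$), and conversely eventually-periodic rational-coordinate behavior gives rational points. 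Thus periodic measures supported on rational periodic orbits are dense in $\M_T(X)$; call this dense set $\mathcal R$. Each $\nu\in\mathcal R$ is supported on finitely many rational points, and a rational point, being in $\Q^d$, trivially lies in $\Liouville$ is false — a rational point has $\omega = \infty$? No: one must check that a rational point $\pp_0/q_0$ is itself a Liouville point, which is true because $\|\pp_0/q_0 - \pp/q\|$ can be made $0$, so $\omega(\pp_0/q_0) = +\infty$ vacuously (take $\pp/q = \pp_0/q_0$ along the enumeration). Hence every $\nu\in\mathcal R$ satisfies $\nu(\Liouville) = 1$.

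The second step is the genericity upgrade. For each $N\in\N$ let $U_N = \bigcup_{q\geq 1}\bigcup_{\pp}\thickopenvar{\{\pp/q\}}{q^{-N}}$, an open dense subset of $X$ with $\Liouville = \bigcap_N U_N$, and let $\mathcal G_N = \{\mu\in\M_T(X):\mu(U_N) = 1\}$, so $\{\mu:\mu(\Liouville) = 1\} = \bigcap_N \mathcal G_N$. It suffices to show each $\mathcal G_N$ is comeager. Now $\mathcal G_N\supset\{\mu:\mu(X\butnot U_N) = 0\}$, and since $X\butnot U_N$ is compact, the map $\mu\mapsto\mu(X\butnot U_N)$ is upper semicontinuous, so $\mathcal G_N$ is a $G_\delta$: write it as $\bigcap_k\{\mu:\mu(X\butnot U_N) < 1/k\}$, each of which is open. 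It remains to show each $\mathcal G_N$ is dense, and for this I use Step~1: given any $\mu\in\M_T(X)$ and a weak-* neighborhood, pick $\nu\in\mathcal R$ in that neighborhood; then $\nu$ is supported on finitely many rational points, each of which lies in the open set $U_N$ (as $U_N$ contains every rational point — take $q$ equal to the denominator), so $\nu(U_N) = 1$, i.e. $\nu\in\mathcal G_N$. Hence $\mathcal G_N$ is dense and open-in-the-$G_\delta$-sense, so comeager, and a countable intersection of comeager sets is comeager by the Baire category theorem (valid since $\M_T(X)$ is a compact metric space).

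The main obstacle is Step~1: establishing that periodic measures on \emph{rational} orbits are weak-* dense in $\M_T(X)$. Density of periodic measures is classical for hyperbolic endomorphisms (via specification/closing lemmas, see e.g. the Przytycki--Urbański reference used elsewhere in the paper), but one must additionally verify that the \emph{rational} periodic orbits suffice — equivalently that their union is dense in $X$ and that one can perturb an arbitrary periodic orbit to a nearby rational one while staying in a given weak-* neighborhood. This follows because $T$ preserves each finite subgroup $(\tfrac1q\Z^d)/\Z^d$, the $T$-orbits inside these subgroups become equidistributed-enough as $q\to\infty$ to approximate any $T$-invariant measure (this is where hyperbolicity, ensuring mixing and the relevant counting estimates for periodic orbits, is used), and any periodic point is a limit of rational points lying on genuinely $T$-periodic rational orbits. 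Once that density is in hand, the rest of the argument is soft and purely topological.
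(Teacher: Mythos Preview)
Your overall strategy is exactly the paper's: write the Liouville set as a $G_\delta$, pass to a corresponding $G_\delta$ in $\M_T(X)$, and get density from periodic-orbit measures supported on rational points, using specification. Two issues remain, one cosmetic and one substantive.

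The cosmetic one: your sets $U_N = \bigcup_{q\geq 1}\bigcup_{\pp}\thickopenvar{\{\pp/q\}}{q^{-N}}$ include the term $q=1$, which contributes a ball of radius $1$ around the origin independently of $N$; hence $\bigcap_N U_N$ contains that ball and is not the Liouville set. The paper takes $q\geq n$ in $U_n$, which fixes this and also makes the $U_n$ interact correctly with the Borel--Cantelli style intersection.

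The substantive one is your ``main obstacle''. You argue that $\Q^d/\Z^d$ consists of periodic points because $T$ permutes $(\tfrac1q\Z^d)/\Z^d$; this is false when $T$ is non-invertible (e.g.\ for $x\mapsto 2x$ on $\R/\Z$ the point $1/2$ is pre-periodic but not periodic), and in any case it points the wrong way. What you need is the \emph{converse}: every periodic point is rational. This is a one-line consequence of hyperbolicity, and is precisely how the paper closes the loop: if $T^N(\yy)=\yy$ then $(\bfM^N-I)\yy\in\Z^d$, and since no eigenvalue of $\bfM$ has modulus $1$, no eigenvalue of $\bfM^N$ equals $1$, so $\bfM^N-I$ is invertible over $\Q$ and $\yy\in\Q^d$. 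Once you have this, ``periodic measures are dense'' (from specification) is already ``periodic measures on rational orbits are dense'', and no perturbation, equidistribution, or counting argument is needed.
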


In the one-dimensional case (i.e. $T(x) = nx$ mod 1 for some $n\geq 2$), combining with Theorems \ref{theoremhofbauer} and \ref{theoremexactdim} implies that the measures in Theorem \ref{theoremliouville} all have entropy zero with respect to $T$. Actually, this result, namely that generic invariant measures have entropy zero, holds more generally for all piecewise monotonic transformations of the interval \cite[Theorem 2(vi)]{Hofbauer2}, as well as for all Axiom A diffeomorphisms \cite[6th theorem on p.101]{Sigmund}.

There are numerous other classes of measures coming from dynamics which are known to be exact dimensional. A notable example is the theorem of Barreira, Pesin, and Schmeling \cite{BPS} to the effect that any measure ergodic, invariant, and hyperbolic with respect to a diffeomorphism is exact dimensional. Theorem \ref{theoremexactdim} applies directly to those measures whose dimension is sufficiently large, but the question still remains about those measures whose dimension is not large enough. We hope to return to this question, answering it at least for the class of expanding toral endomorphisms with distinct eigenvalues, and maybe even for possibly a much larger class. But for now, we turn our attention to measures coming from conformal dynamical systems.

\subsection{Main results: conformal examples of quasi-decaying measures}
We now come to the main results of this paper, namely extensions of the results of \6\ref{subsectionfriendlyexamples} to much broader classes of conformal dynamical systems, replacing friendliness and absolute friendliness by the quasi-decay condition. Our first theorem and its corollary generalize Theorems \ref{theoremsimilarityIFS}-\ref{theoremgibbsfinite}. It will be proven via a more general theorem which will allow us to more easily deduce that certain measures are quasi-decaying in later papers in this series.

\begin{theorem}[Proven in Section \ref{sectiongibbs}]
\label{theoremgibbs}
Fix $d\in\N$, and let $(u_a)_{a\in \alphabet}$ be an irreducible CIFS on $\R^d$. Let $\phi: \alphabet^\N\to\R$ be a summable locally H\"older continuous potential function, let $\mu_\phi$ be an equilibrium state of $\phi$, and let $\pi:\alphabet^\N\to\R^d$ be the coding map. Suppose that the Lyapunov exponent
\begin{equation}
\label{lyapunovmu}
\chi_{\mu_\phi} := \int \log(1/|u_{\omega_1}'(\pi\circ\sigma(\omega))|) \;\dee\mu_\phi(\omega)
\end{equation}
is finite. Then $\pi_*[\mu_\phi]$ is quasi-decaying.
\end{theorem}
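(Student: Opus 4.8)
The plan is to verify the quasi-decay condition \eqref{QDwithE} directly, relative to a suitable countable family of sets $\borel_\Index$ on which we have good control of the geometry of the CIFS. Since $\mu_\phi$ is exact dimensional of dimension $\delta = h_{\mu_\phi}/\chi_{\mu_\phi}$ (by the Rokhlin formula / Volume Lemma for Gibbs measures of locally Hölder potentials, which holds because $\chi_{\mu_\phi} < \infty$ is assumed), one should first check whether Theorem \ref{theoremexactdim} applies; when $\delta > d - 1$ we are already done. The substance of the theorem is therefore the range $\delta \leq d - 1$, where we must exploit the \emph{irreducibility} hypothesis rather than a dimension count. The rough idea is that irreducibility of the CIFS forces the limit set $\pi(\alphabet^\N)$ to be ``spread out'' transverse to every hyperplane at small scales, via the conformality of the maps $u_a$: the images $u_\omega(\R^d)$ of long cylinders $[\omega]$ are, up to bounded distortion, scaled copies of $\pi(\alphabet^\N)$ itself, and the relative positions of the ``children'' $u_{\omega a}(\R^d)$ inside $u_\omega(\R^d)$ cannot all lie within a thin slab unless the whole limit set does.

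The key steps, in order, would be: (1) Fix $\gamma > 0$ and a typical $\xx = \pi(\omega)$; choose a scale $\rho$ and locate the generation $n = n(\xx,\rho)$ of cylinders $[\omega_1\cdots\omega_n]$ whose images have diameter $\asymp_\times \rho$, using bounded distortion of the conformal maps and the finiteness of $\chi_{\mu_\phi}$ to control $n$ in terms of $\log(1/\rho)$ (this is where the ``$\beta \leq \rho^\gamma$'' restriction and the passage to sets $\borel_\Index$ will be used — on $\borel_\Index$ one restricts to $\omega$ whose symbolic expansion grows at a controlled rate, so that the finitely-many scales between consecutive generations are comparable). (2) Reduce the estimate on $\mu_\phi(\thickvar\LL{\beta\rho}\cap B\cap\borel)$ to a sum over generation-$n$ cylinders meeting $B$, then further descend several generations until the cylinder images have diameter $\asymp_\times \beta^{c}\rho$ for a suitable $c$. (3) Show, using irreducibility + conformality, that among the bounded-overlap collection of ``grandchildren'' of a cylinder, only a proportion $\lesssim_\times \beta^\alpha$ of the $\mu_\phi$-mass can lie in any $\beta\rho$-neighborhood of a hyperplane; this is a ``uniform transversality'' or ``$C^\alpha$-non-concentration'' lemma for the limit set, proved by a compactness argument over the (finite, after passing to $\borel_\Index$, or at least uniformly controlled) set of relevant cylinder-configurations and hyperplane directions, together with the Gibbs property comparing $\mu_\phi$-measure of a cylinder to $e^{\phi}$-sums. (4) Assemble the per-cylinder estimates, summing the geometric series in $\beta$, and use the Gibbs/mass-distribution property once more to compare the sum back to $\mu_\phi(B)$.

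The main obstacle I anticipate is step (3): establishing a \emph{scale-uniform, direction-uniform} non-concentration estimate for $\mu_\phi$ near hyperplanes using only irreducibility (no open set condition, and $\alphabet$ possibly infinite). One cannot simply invoke compactness of the space of CIFS configurations because the alphabet may be infinite and the potential only locally Hölder; the fix is to work relative to $\borel_\Index$, where the symbolic data is tame enough that a compactness/limiting argument produces a measure concentrated on a hyperplane, contradicting irreducibility — so the contradiction must be quantified carefully, tracking how the constant $\alpha$ degrades with $\gamma$ and with the index $\Index$. A secondary technical nuisance is bounded distortion for conformal maps in dimension $d \geq 2$ (Koebe-type estimates), which is standard but must be stated so that the comparison of $\mu_\phi$-measures of nested cylinders, and of cylinder diameters, carries the right multiplicative constants; in dimension $d = 1$ and for similarities these reduce to exact scaling. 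Once the non-concentration lemma is in hand, steps (1), (2), and (4) are bookkeeping of the type already carried out in \cite{Urbanski} for the Federer case.
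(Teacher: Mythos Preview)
Your outline captures the correct high-level strategy --- irreducibility plus conformality gives uniform non-concentration near hyperplanes, and this is iterated across scales to produce the $\beta^\alpha$ decay --- and your identification of step~(3) as the crux is accurate. However, two points of your execution differ from the paper's and deserve comment, one structural and one potentially fatal.

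Structurally, the paper factors the argument through a general statement (Theorem~\ref{theoremIFSgeneral}) about arbitrary coding maps: one posits a set $G \subset \alphabet^*$ of ``good'' prefixes at which a \emph{fixed} fraction $\kappa$ of the conditional measure $\mu_\omega$ escapes the $\kappa D_\omega$-neighborhood of any hyperplane after $r$ further symbols, and then shows by an induction on generations (essentially a martingale argument on the conditional measures $\mu_{\tau\given i}$) that $\mu_\omega(\thickvar\LL{\kappa\rho}\cap E(|\omega|,\rho,k)) \leq (1-\kappa)^k$. The $\beta^\alpha$ arises as $(1-\kappa)^k$ with $k \asymp \log(1/\beta)$, the number of good generations between scales $\rho$ and $\beta\rho$; the ergodic theorem applied to \eqref{lyapunovmu} controls this count. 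For the Gibbs case one then verifies, by a single normal-families compactness argument using bounded distortion and irreducibility, that $G = \alphabet^*$ works for some uniform $\kappa,r$ --- the Gibbs property enters only to give a uniform lower bound $\mu_\omega([\omega\ast\tau\given r]) \gtrsim_{\tau,r} 1$ independent of $\omega$. Your step~(3) as written (``only a proportion $\lesssim \beta^\alpha$ \ldots among grandchildren'') conflates the single-generation $\kappa$-escape with the iterated $\beta^\alpha$; separating them makes the compactness argument far easier, since you only need one fixed $\kappa$ rather than a $\beta$-dependent estimate.

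The potentially fatal point is your phrase ``bounded-overlap collection of grandchildren'': without the open set condition there is no bounded overlap of cylinder images, and a geometric counting argument over sub-cylinders meeting $\thickvar\LL{\beta\rho}$ can fail badly. The paper sidesteps this entirely by never leaving the symbolic space: the induction is on the conditional measures $\mu_\omega = \mu(\cdot\,|\,[\omega])$, which decompose exactly (no overlap) regardless of how the images $u_\omega(X)$ sit in $\R^d$. Only at the very end does one push forward under $\pi$, using Lemma~\ref{lemmaQDETS} to transfer the symbolic quasi-decay to $\pi_*[\mu_\phi]$. If you rework your steps~(2)--(4) to operate on $\mu_\omega$ rather than on geometric sub-balls, your argument converges to the paper's.
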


Letting $\phi(\omega) = \delta\log(|u_{\omega_1}'(\pi\circ\sigma(\omega))|)$, where $\delta = \HD(K)$, yields the following corollary:

\begin{corollary}
\label{corollaryconformal}
Fix $d\in\N$, and let $(u_a)_{a\in \alphabet}$ be a regular (cf. \cite[Theorem 4.2.9]{MauldinUrbanski2}) irreducible CIFS on $\R^d$. Let $\mu$ be the conformal measure of $(u_a)_{a\in \alphabet}$, and let $\pi:\alphabet^\N\to\R^d$ be the coding map. If the Lyapunov exponent of $\mu$ is finite, then $\pi_*[\mu]$ is quasi-decaying.
\end{corollary}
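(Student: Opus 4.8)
The plan is to deduce the corollary from Theorem~\ref{theoremgibbs} simply by plugging in the potential prescribed in the statement, so that the corollary amounts to a dictionary translation between the language of conformal measures and that of Gibbs measures. Write $\zeta(\omega) := \log|u_{\omega_1}'(\pi\circ\sigma(\omega))|$ for the geometric potential of the CIFS, set $\delta := \HD(K)$, and let $\phi := \delta\zeta$. Since the CIFS is regular, $\delta$ is a zero of the pressure function $t\mapsto P(t\zeta)$, whence $P(\phi) = 0$.

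First I would check that $\phi$ satisfies the hypotheses of Theorem~\ref{theoremgibbs}. Local Hölder continuity of $\phi$ is standard for a CIFS: if $\omega,\tau\in\alphabet^\N$ agree on their first $n$ coordinates, then $\pi\circ\sigma(\omega)$ and $\pi\circ\sigma(\tau)$ are exponentially close in $n$, while $\log|u_{\omega_1}'|$ is Hölder on the relevant domain by the cone/bounded distortion condition, so $|\phi(\omega)-\phi(\tau)|$ decays exponentially in $n$. Summability of $\phi$ is the assertion that $\sum_{a\in\alphabet}\big(\sup_x|u_a'(x)|\big)^\delta < \infty$, which is the standard characterization of $P(\phi) < \infty$ and so holds because $P(\phi) = 0$. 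The irreducibility hypothesis of Theorem~\ref{theoremgibbs} is inherited verbatim from the corollary.

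Next I would identify the conformal measure $\mu$ with a Gibbs measure of $\phi$. By the Mauldin--Urbański theory of regular systems, the $\delta$-conformal measure is precisely the eigenmeasure of the dual Ruelle operator of $\phi$, and therefore has the Gibbs property $\mu([\omega])\asymp_\times\exp\big(S_n\phi(\tau)\big)$ for $\tau\in[\omega]$, $|\omega| = n$, where $S_n\phi$ denotes the $n$-th Birkhoff sum (here we use $P(\phi) = 0$); thus $\mu$ is a Gibbs measure of $\phi$ in the sense of Theorem~\ref{theoremgibbs}. (Alternatively, $\mu$ differs from the shift-invariant Gibbs measure $\mu_\phi$ by a density that is bounded above and below, and quasi-decay of a pushforward measure is preserved under replacing the measure by a comparable one.) Finally, the Lyapunov exponent of $\mu$, namely $\int\log(1/|u_{\omega_1}'(\pi\circ\sigma(\omega))|)\,\dee\mu(\omega) = -\int\zeta\,\dee\mu$, is literally the quantity $\chi_{\mu_\phi}$ of~\eqref{lyapunovmu}, so the hypothesis that the Lyapunov exponent of $\mu$ is finite is exactly the hypothesis of Theorem~\ref{theoremgibbs}. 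That theorem then gives that $\pi_*[\mu]$ is quasi-decaying.

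I do not expect any genuine obstacle, since all the quantitative content --- verifying the quasi-decay estimate~\eqref{QDwithE} --- is carried out in the proof of Theorem~\ref{theoremgibbs}; the only points requiring (routine) care are confirming that the prescribed $\phi$ is indeed summable and locally Hölder continuous, and that the normalization of the ``conformal measure'' is compatible with the $P(\phi) = 0$ normalization built into the Gibbs property used in Theorem~\ref{theoremgibbs}. As a sanity check, for a finite alphabet $\alphabet$ summability and finiteness of the Lyapunov exponent are automatic, so Theorems~\ref{theoremsimilarityIFS}--\ref{theoremgibbsfinite} are recovered as special cases.
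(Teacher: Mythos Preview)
Your proposal is correct and takes essentially the same approach as the paper: the paper's proof is the single line ``Letting $\phi(\omega) = \delta\log(|u_{\omega_1}'(\pi\circ\sigma(\omega))|)$, where $\delta = \HD(K)$, yields the following corollary,'' and you have simply spelled out the verifications (local H\"older continuity, summability via $P(\delta\zeta)=0$, identification of the conformal measure as a Gibbs measure, and matching of the Lyapunov exponent hypothesis) that make this one-line deduction go through.
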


The improvements on Theorems \ref{theoremconformalfinite} and \ref{theoremgibbsfinite} are twofold:
\begin{itemize}
\item The CIFS can be infinite, as long as the Lyapunov exponent is finite.
\item The open set condition is no longer needed.
\end{itemize}
Both of these improvements are quite significant. Without the open set condition it is hard to even calculate the dimension of the limit set of a CIFS; see e.g. \cite{Barany, PollicottSimon, SimonSolomyak}. Also, the geometry of infinite alphabet CIFSes can be much wilder than the geometry of finite CIFSes, whose limit sets are always Ahlfors regular (cf. \cite[Theorem 3.14]{MauldinUrbanski1} versus \cite[Lemma 4.12 - Theorem 4.16]{MauldinUrbanski1}). By contrast, the finite Lyapunov exponent assumption which replaces it is quite weak; for example, in the case of conformal measures it is implied by strong regularity (cf. \cite[Definition 4.3.1]{MauldinUrbanski2}). It is also a necessary assumption, as demonstrated by certain IFSes related to continued fractions \cite[Theorem 4.5]{FSU1}.

A connection between the finite Lyapunov exponent condition and extremality also appeared in an earlier paper of three of the authors:

\begin{theorem}[{\cite[Theorem 2.1]{FSU1}}]
\label{theoremFSU}
If $\mu$ is a probability measure on $[0,1]\butnot\Q$ invariant with finite Lyapunov exponent under the Gauss map, then $\mu$ is extremal.
\end{theorem}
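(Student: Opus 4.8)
The plan is to translate the Diophantine behaviour of an irrational $x\in[0,1]$ into the growth rate of the denominators $q_n$ of its continued fraction convergents, and then to apply Birkhoff's ergodic theorem to the Gauss map $T\colon x\mapsto\{1/x\}$ --- the finite Lyapunov exponent hypothesis being exactly what makes the relevant observable integrable. Write $x=[0;a_1,a_2,\dots]$ with convergents $p_n/q_n$. From the classical estimates $\tfrac{1}{2q_nq_{n+1}}<|x-p_n/q_n|<\tfrac{1}{q_nq_{n+1}}$ together with the best-approximation inequality $\|qx\|\ge\|q_nx\|$ for $1\le q<q_{n+1}$, one obtains the well-known identity
\[
\omega(x)=1+\limsup_{n\to\infty}\frac{\log q_{n+1}}{\log q_n}
\]
valid for every irrational $x$: the convergents realize the exponent $1+\tfrac{\log q_{n+1}}{\log q_n}+o(1)$, and any competing rational $p/q$ with $q\in[q_n,q_{n+1})$ satisfies $|x-p/q|\gtrsim_\times 1/(q_{n+1}q)$, hence yields an exponent at most $1+\tfrac{\log q_{n+1}}{\log q_n}+o(1)$. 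Since here $d=1$, we have $x\in\VWA_1$ iff $\limsup_n\tfrac{\log q_{n+1}}{\log q_n}>1$, so it suffices to show that for $\mu$-a.e.\ $x$ the sequence $\tfrac1n\log q_n$ converges to a finite, strictly positive limit; then $\tfrac{\log q_{n+1}}{\log q_n}\to1$ and $\omega(x)=2$.

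To produce that limit I would use the cylinder identity $|(T^n)'(x)|=(q_n+q_{n-1}T^nx)^2$, valid on the cylinder cut out by $a_1,\dots,a_n$; since $0\le q_{n-1}\le q_n$ and $T^nx\in[0,1)$, this gives $q_n^2\le|(T^n)'(x)|\le 4q_n^2$, i.e.\ $\log|(T^n)'(x)|=2\log q_n+O(1)$. Now $\log|T'(x)|=-2\log x\ge0$ on $(0,1)$ and $\int(-2\log x)\,\dee\mu(x)=\chi_\mu<\infty$ by hypothesis, so Birkhoff's theorem applies to the observable $\log|T'|$: the averages $\tfrac1n\log|(T^n)'(x)|=\tfrac1n\sum_{k=0}^{n-1}\log|T'(T^kx)|$ converge $\mu$-a.e.\ to a $T$-invariant function $\bar\chi\in L^1(\mu)$ with $\int\bar\chi\,\dee\mu=\chi_\mu$. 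Dividing by the cylinder estimate, $\tfrac1n\log q_n\to\bar\chi(x)/2$ for $\mu$-a.e.\ $x$.

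It remains to verify that $\bar\chi>0$ $\mu$-a.e. Certainly $\bar\chi\ge0$. If $\bar\chi$ vanished on a set $E$ with $\mu(E)>0$, then $E$ is $T$-invariant (as $\bar\chi$ is), and the normalized restriction $\mu_E=\mu(E)^{-1}\mu|_E$ is a $T$-invariant probability measure with $\int\log|T'|\,\dee\mu_E=\mu(E)^{-1}\int_E\bar\chi\,\dee\mu=0$; but $\log|T'(x)|=-2\log x>0$ for every $x\in(0,1)$, so $\int\log|T'|\,\dee\nu>0$ for every probability measure $\nu$ on $(0,1)$, a contradiction. Hence $\bar\chi(x)\in(0,\infty)$ $\mu$-a.e., so $\tfrac1n\log q_n$ converges to a finite positive limit $\mu$-a.e., which forces $\limsup_n\tfrac{\log q_{n+1}}{\log q_n}=1$ and therefore $\omega(x)=2$ for $\mu$-a.e.\ $x$. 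Thus $\mu(\VWA_1)=0$, i.e.\ $\mu$ is extremal.

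The argument is essentially routine once the dictionary between convergents and iterates of $T$ is set up, and it is notable that ergodicity of $\mu$ is never needed --- $\chi_\mu<\infty$ already forces $\bar\chi<\infty$ $\mu$-a.e., and the passage to $\omega$ uses only the pointwise limit of $\tfrac1n\log q_n$. The one genuinely delicate point is the a.e.-positivity of $\bar\chi$; this is also where the hypothesis is sharp, since an invariant measure concentrated on orbits that return extremely close to $0$ can have $\chi_\mu=\infty$, and for such measures (those of \cite[Theorem 4.5]{FSU1}) the conclusion can fail outright.
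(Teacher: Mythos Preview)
Your argument is correct. The paper itself does not prove this theorem: it is stated as a citation to \cite[Theorem 2.1]{FSU1} and used only for comparison with Theorem~\ref{theoremgibbs}, so there is no in-paper proof to benchmark against. That said, your proof is the standard one and is essentially the argument in \cite{FSU1}: translate $\omega(x)$ into the growth ratio $\log q_{n+1}/\log q_n$ via best approximation, use the identity $|(T^n)'(x)|=(q_n+q_{n-1}T^nx)^2$ to tie $\log q_n$ to Birkhoff sums of $\log|T'|$, and invoke the pointwise ergodic theorem. Your handling of the two delicate points is clean: you correctly note that ergodicity is not needed (Birkhoff gives convergence to an invariant function whose integral over any invariant set equals that of $\log|T'|$), and your positivity argument for $\bar\chi$ is right --- the strict inequality $-2\log x>0$ on $(0,1)$ forces $\int_E\log|T'|\,\dee\mu>0$ for any invariant $E$ of positive measure, contradicting $\bar\chi\equiv 0$ on $E$.
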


This theorem is neither more nor less general than Theorem \ref{theoremgibbs}. It is obviously not more general, since it only applies to the Gauss map, and the conclusion that $\mu$ is extremal is weaker than the conclusion of Theorem \ref{theoremgibbs} which states that the relevant measure is quasi-decaying. But it is also not less general, since it applies to all invariant measures with finite Lyapunov exponent, and not only those which are equilibrium states of summable H\"older families of potential functions. The existence of invariant measures with finite Lyapunov exponent which are not quasi-decaying can be seen from Theorem \ref{theoremdimzero}, since the image of an entropy zero shift-invariant measure on $\{1,2\}^\N$ under the continued fractions / Gauss IFS coding map is such a measure. So Theorem \ref{theoremFSU} would become false if ``extremal'' were replaced by ``quasi-decaying''. It would also become false if ``Gauss map'' were replaced by ``shift map of a CIFS'', due to Theorem \ref{theoremliouville}, which produces non-extremal measures invariant under the map $x\mapsto 2x$, which is the shift map of the binary IFS. This indicates that the phenomenon captured in Theorem \ref{theoremFSU} is really a ``number-theoretic'' phenomenon arising directly from the connection between the Gauss map and Diophantine approximation, rather than from some geometric intermediary.

Moving on to our next conformal setting, we consider the Patterson--Sullivan measures of geometrically finite Kleinian groups:

\begin{theorem}[Proven in Section \ref{sectionGF}]
\label{theoremGFQD}
Let $G$ be a geometrically finite group of M\"obius transformations of $\R^d$ which does not preserve any generalized sphere. Then the Patterson--Sullivan measure $\mu$ of $G$ is quasi-decaying.
\end{theorem}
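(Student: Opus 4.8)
The plan is to realize $\mu$, pulled back along a coding map, as the pushforward of a Gibbs measure of an \emph{irreducible conformal graph directed Markov system of finite Lyapunov exponent}, and then to invoke the general version of Theorem~\ref{theoremgibbs} (which, as remarked in the proof sketch of Theorem~\ref{theoremHrational}, passes from iterated function systems to graph directed systems with only cosmetic changes). Write $\delta = \HD(\Lambda) = \delta(G)$. By Sullivan's theorem $\mu$ is exact dimensional of dimension $\delta$, so if $\delta > d - 1$ we are done by Theorem~\ref{theoremexactdim}, and when $G$ is convex-cocompact the conclusion is Theorem~\ref{theoremCCK}. Thus the substantive case is a cusped $G$ with $\delta \le d - 1$, in which some parabolic subgroup has rank $k_i < d-1$ --- precisely the configuration that makes $\mu$ fail to be absolutely friendly in Theorem~\ref{theoremGFfriendly}.

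The first step is to build the system. Using the thick--thin decomposition of the convex core of $G$ together with a $G$-invariant family of disjoint standard horoballs based at the finitely many orbits of bounded parabolic points $p_1,\dots,p_r$ --- the same geometric data underlying the proof of Theorem~\ref{theoremGFfriendly} --- one obtains a Markov partition of $\Lambda$ whose generating maps are the inverse branches of the ``thick'' dynamics (finitely many, uniformly contracting) together with, at each cusp, the inverse parabolic elements $\{g : g \in \mathrm{Stab}(p_i)\}$. Composing each parabolic element with a following thick branch --- the standard inducing device, which restores uniform bounded distortion and contraction along cylinders --- produces a countable conformal graph directed system $\mathcal S$ with coding map $\pi$, and one checks that $\pi_*\nu = \mu$ where $\nu$ is the Gibbs measure of $\phi = \delta\log|(\,\cdot\,)'|$; this identity says that the $\delta$-conformal measure of $\mathcal S$ is the Patterson--Sullivan measure of $G$, and follows from uniqueness of $\delta$-conformal densities (note that $\mathcal S$ is regular at the exponent $\delta$ because $\delta$ is the critical exponent of $G$).

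It then remains to verify the hypotheses of Theorem~\ref{theoremgibbs}. The limit set of $\mathcal S$ is $\Lambda$, which lies in no generalized sphere exactly because $G$ preserves none, giving the required irreducibility. Over a rank-$k$ cusp the induced symbols have derivatives $\asymp_\times \|\mathbf n\|^{-2}$ with $\mathbf n$ ranging over a finite-index translation subgroup of $\mathrm{Stab}(p_i) \cong \Z^k$, so $\sum \|\mathbf n\|^{-2\delta} < \infty$ because $2\delta > k$ (a standard fact for groups with a rank-$k$ parabolic subgroup); this is the summability of $\phi$, and local H\"older continuity of $\phi$ is the usual bounded-distortion estimate. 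Finally, the contribution of the parabolic symbols to the Lyapunov integral $\int\log(1/|(\,\cdot\,)'|)\,\dee\mu$ is comparable to $\sum_i\int_0^{r_0}\log(1/\epsilon)\,\dee\big(\mu(B(p_i,\epsilon))\big)$, which converges because the global measure formula gives $\mu(B(p_i,\epsilon))\asymp_\times\epsilon^{2\delta-k_i}$ with $2\delta-k_i>0$. Thus $\mu=\pi_*\nu$ is quasi-decaying by (the graph directed form of) Theorem~\ref{theoremgibbs}.

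\textbf{The main difficulty} lies in the first step: (i) the parabolic inducing lemma, i.e.\ that $\mathcal S$ genuinely has uniform bounded distortion and contraction along cylinders in spite of the parabolic generators --- this is where the geometry of standard horoballs is used in earnest, as in the theory of parabolic graph directed Markov systems --- and (ii) the identification of the $\delta$-conformal measure of $\mathcal S$ with $\mu$, which must be argued carefully near the $p_i$ since a priori the conformal measure of an infinite system could charge the parabolic points. The finite-Lyapunov-exponent estimate is the only place where geometric finiteness enters quantitatively beyond Theorem~\ref{theoremGFfriendly}, and is also where the hypothesis is genuinely needed. Alternatively --- and this may well be how Section~\ref{sectionGF} proceeds, continuing directly from the proof of Theorem~\ref{theoremGFfriendly} --- one can bypass the symbolic formalism and prove quasi-decay on $\Lambda$ directly, partitioning $\Lambda$ up to a $\mu$-null set into sets $\borel_n$ that record controlled cusp-excursion behavior and establishing the quasi-decay inequality relative to each $\borel_n$ via the global measure formula; the flat clustering of $\Lambda$ within an $O(\epsilon^2)$-neighborhood of the osculating plane at $p_i$ at scale $\epsilon$ (exactly the phenomenon obstructing absolute decay when $k_i < d-1$) is what makes the design of the $\borel_n$ delicate, and is, I expect, the true crux of the matter.
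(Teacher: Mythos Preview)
Your primary proposal (code $\Lambda$ by an induced conformal GDMS and invoke the graph-directed form of Theorem~\ref{theoremgibbs}) is a genuinely different route from the paper's. The paper argues directly and geometrically: the heart of the proof is a \emph{global decay} lemma, asserting that $\mu\big(\thickvar\LL\beta\big)\lesssim_\times\beta^\alpha$ uniformly over all generalized spheres $\LL$. Given this, quasi-decay at a $\mu$-typical $\xi$ is obtained by pulling back the picture at scale $\rho$ by a group element $g$ with $g(\zero)$ close to $\xi_{-\log\rho}$ (which exists by the logarithm law $\dist_\hyp(\xi_t,G(\zero))=o(t)$), applying global decay to the pulled-back hyperplane neighborhood, and pushing forward via the conformal transformation equation for $\mu$; the $o(t)$ error from the logarithm law is absorbed by the slack $\beta\le\rho^\gamma$. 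The global decay lemma itself is proven by a chain of reductions (essentially a Vitali covering argument plus conformal pullback to a neighborhood of a fixed cusp) down to an elementary Lebesgue-measure statement on the translation plane $\PP_\eta$ at each cusp $\eta$.

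Your symbolic approach should work in principle and is pleasantly parallel to the rational-function case, but it carries real overhead that the paper sidesteps: building the induced system and checking its axioms (note that the cone condition in Definition~\ref{definitionCIFS} is not obviously available and you would have to argue, as in the sketch for Theorem~\ref{theoremHrational}, that it is not used), identifying $\mu$ with the Gibbs measure (your point (ii) is a genuine issue), establishing irreducibility in the sense of Definition~\ref{definitionCIFS} (this requires a rigidity step, that any proper real-analytic set invariant under the induced maps forces a $G$-invariant generalized sphere), and extending Theorem~\ref{theoremgibbs} to the graph-directed setting. The paper's direct approach avoids all of this and, as a bonus, the same global decay lemma is the key input for friendliness (Theorem~\ref{theoremGFfriendly}); your coding approach does not obviously yield that. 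Your closing alternative --- direct estimation with sets $\borel_n$ recording controlled cusp excursions --- is in fact close to what the paper does, with the logarithm law playing precisely the role of bounding those excursion rates.
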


At first, this theorem may not seem to give any new information beyond that provided in Theorem \ref{theoremGFfriendly}: that Patterson--Sullivan measures of irreducible geometrically finite groups are extremal. But actually, the quasi-decay condition implies more Diophantine properties than friendliness does, via \cite[Theorem 1.3]{DFSU_GE1}, which shows that the image of a quasi-decaying measure under a nondegenerate embedding is still extremal. Such a theorem cannot apply to friendly measures, since the Lebesgue measure of a nondegenerate manifold is friendly, but its image under a nondegenerate embedding may be contained in a rational hyperplane and therefore non-extremal. But Theorem \ref{theoremGFQD} shows that such a fate cannot befall a Patterson--Sullivan measure. Two examples give this observation special significance: first of all, we may wish to consider the Patterson--Sullivan measure of a group of M\"obius transformations of $\R^d$ which preserves the unit sphere but not any smaller generalized sphere; this measure is absolutely continuous to the image under a nondegenerate embedding of a geometrically finite group acting irreducibly on $\R^{d - 1}$, so Theorem \ref{theoremGFQD} and \cite[Theorem 1.3]{DFSU_GE1} imply that the measure is extremal. Second of all, if we consider $\R^d$ as isomorphic with the space of $M\times N$ matrices for some $M,N\geq 2$ such that $MN = d$, then Theorem \ref{theoremGFQD} and \cite[Corollary 1.8]{DFSU_GE1} imply that the Patterson--Sullivan measure gives zero measure to the set of very well approximable $M\times N$ matrices, but such a conclusion cannot be deduced from Theorem \ref{theoremGFfriendly}.

Also, treating Theorem \ref{theoremGFfriendly} as though it ``came first'' is a bit silly in the sense that both theorems use the same main lemma (Lemma \ref{lemmaglobaldecay}), and the argument from that lemma to Theorem \ref{theoremGFQD} is significantly easier than the argument from that lemma to Theorem \ref{theoremGFfriendly}. In any case, it is interesting to have a natural example of a measure which is both quasi-decaying and friendly but not absolutely friendly.

Our last theorem relating quasi-decay to conformal dynamics is in the setting of rational functions. Its proof uses results of M.~Szostakiewicz, A.~Zdunik, and the fourth-named author that utilize ``fine inducing'', as well as the rigidity result of W.~Bergweiler, A.~E.~Eremenko, and S.~J.~van Strien mentioned earlier. We recall that if $T:X\to X$ is a dynamical system, then a potential function $\phi:X\to\R$ is called \emph{hyperbolic} if there exists $n\in\N$ such that $\sup(S_n \phi) < P(T^n,S_n \phi)$, where $S_n\phi \df \sum_{k=0}^{n-1} \phi\circ T^k$ and $P(T,\phi)$ is the pressure of $\phi$ with respect to $T$.
(The notion of hyperbolic potentials was introduced in \cite{InoquioRivera} and generalizes the notion of \emph{pressure gap} corresponding to $n=1$, which was introduced in \cite{DenkerUrbanski}. Note that $\phi$ is a hyperbolic potential with respect to $T$ if and only if $S_n\phi$ has pressure gap with respect to  $T^n$ for some $n\ge 1$.)

\begin{theorem}
\label{theoremrational}
Let $T:\what\C\to\what\C$ be a rational function, let $\phi:\what\C\to\R$ be a H\"older continuous hyperbolic potential function, and let $\mu_\phi$ be the equilibrium state of $(T,\phi)$ (for existence and uniqueness see \cite{DenkerUrbanski}). If the Julia set of $T$ is not contained in a generalized sphere, then $\mu$ is quasi-decaying.
\end{theorem}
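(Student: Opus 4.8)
The plan is to reduce the theorem to Theorem~\ref{theoremgibbs} --- more precisely, to the general theorem from which Theorem~\ref{theoremgibbs} is deduced --- by means of the ``fine inducing'' scheme of M.~Szostakiewicz, A.~Zdunik, and the fourth-named author, supplying the irreducibility hypothesis via the Bergweiler--Eremenko--van~Strien rigidity theorem and the finiteness of the relevant Lyapunov exponent via the hyperbolicity of $\phi$.

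First I would run the fine inducing construction. Since $\phi$ is H\"older continuous and hyperbolic, this yields a countable conformal graph directed Markov system $\Phi$ (in the sense of \cite{MauldinUrbanski2}, possibly without the cone condition, just as in the proof sketch of Theorem~\ref{theoremHrational}) whose generators are inverse branches of iterates $T^\tau$ of $T$, together with a summable, locally H\"older continuous induced potential $\bar\phi$ (morally $\bar\phi = S_\tau\phi$) whose Gibbs measure $\bar\mu$ satisfies $\pi_*[\bar\mu]\asymp_\times\mu_\phi\given\Omega$, where $\pi$ is the coding map of $\Phi$ and $\Omega$ is the inducing domain. By conservativity of $\mu_\phi$ (Poincar\'e recurrence), the limit set of $\Phi$ is co-null in $\Omega$, and $\bigcup_{n\geq0}T^{-n}(\Omega)$ is co-null in the Julia set $J$.

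Next I would verify the two hypotheses of (the general form of) Theorem~\ref{theoremgibbs}. For the Lyapunov exponent: by the Abramov/Kac formula the Lyapunov exponent of $\Phi$ with respect to $\bar\mu$ equals $\chi_{\mu_\phi}/\mu_\phi(\Omega)$ with $\chi_{\mu_\phi} = \int\log|T'|\,\dee\mu_\phi$, so finiteness of the former reduces to finiteness --- and, for the inducing to be meaningful, positivity --- of $\chi_{\mu_\phi}$, which follows from the hyperbolicity of $\phi$ (indeed this positivity and finiteness, together with an exponential tail bound on $\tau$, are part of what makes fine inducing work in the first place); this is the precise place where the rigid hypothesis ``$T$ hyperbolic'' of Theorem~\ref{theoremHrational} is traded for a hypothesis on the potential alone. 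For irreducibility: the assumption that $J$ is not contained in a generalized sphere, together with the rigidity theorem \cite[Corollary~1 and Theorem~2]{EremenkoVanstrien} (cf.~\cite[Theorem~2]{BergweilerEremenko}) and the ergodicity of $\mu_\phi$ --- which propagates any degeneracy from $\Omega$ to all of $J$ --- shows that $\mu_\phi$ is nonplanar and hence that the limit set of $\Phi$, whose closure supports $\mu_\phi\given\Omega$, is contained in no affine hyperplane, exactly as in the proof sketch of Theorem~\ref{theoremHrational}.

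With both hypotheses in hand, Theorem~\ref{theoremgibbs} --- in the general form designed to accept a conformal graph directed system together with an inducing scheme --- yields that $\mu_\phi$ is quasi-decaying. Internally this runs in two stages: first quasi-decay of $\pi_*[\bar\mu]\asymp_\times\mu_\phi\given\Omega$, i.e.\ the estimate \eqref{QDwithE} for the induced system; then propagation to all of $J$, by writing $J$ up to a $\mu_\phi$-nullset as a countable disjoint union of pieces on each of which some iterate $T^n$ is an injective conformal map onto a subset of $\Omega$, and transferring \eqref{QDwithE} along $T^n$ using the $T$-invariance and bounded distortion of $\mu_\phi$ together with its exact dimensionality (a standard consequence of hyperbolicity of $\phi$), the latter furnishing a Federer-type estimate at $\mu_\phi$-a.e.\ point. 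I expect the main obstacle --- beyond the bookkeeping of packaging fine inducing into the form required by Theorem~\ref{theoremgibbs} --- to be this propagation: $T^n$ bends an affine hyperplane $\LL$ into a hypersurface with a curvature error of order $\rho^2$ on $B(\xx,\rho)$, so that for $\beta\geq\rho$ the error is absorbed by the slab width $\beta\rho$, but for $\beta<\rho$ one must first cover $B(\xx,\rho)$ by boundedly overlapping balls of radius $\sqrt{\beta\rho}$ --- the scale at which the curvature becomes negligible relative to the slab --- centered at $\mu_\phi$-generic points, and only then transfer and sum, at the price of a harmless loss in the exponent $\alpha$. This is exactly the phenomenon forcing the $\gamma$-dependence in the quasi-decay condition, and the reason the theorem is false with ``friendly'' in place of ``quasi-decaying'', friendliness being --- unlike quasi-decay --- a $C^2$-diffeomorphism invariant.
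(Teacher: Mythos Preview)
Your approach is essentially the paper's: fine inducing from \cite{SUZ1} to land in the setting of Theorem~\ref{theoremgibbs}, with irreducibility supplied by the Bergweiler--Eremenko--van~Strien rigidity theorem. The paper's execution is shorter, however, because the \cite{SUZ1} construction already does more than you assume. First, it yields a genuine CIFS rather than a graph directed system, so Theorem~\ref{theoremgibbs} applies verbatim with no ``general form'' needed. Second, and more importantly, \cite[Lemma~20]{SUZ1} gives that $\mu_\phi$ itself---not merely $\mu_\phi\given\Omega$---is (coarsely asymptotic to) the image under the coding map of the induced Gibbs measure $\bar\mu$; summability and finiteness of the Lyapunov exponent are cited directly from \cite[(22) and Proposition~23]{SUZ1}. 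Consequently your entire propagation paragraph is unnecessary: once Theorem~\ref{theoremgibbs} fires, quasi-decay of $\mu_\phi$ follows immediately with no need to transport the estimate along branches of $T^n$ or to worry about curvature of $T^n(\LL)$. (Your side remark that friendliness, unlike quasi-decay, is a $C^2$-diffeomorphism invariant is also backwards; cf.\ the discussion following Theorem~\ref{theoremGFQD}.)
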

\begin{proof}
Let $(u_a)_{a\in \alphabet}$ be the CIFS described on \cite[p.20]{SUZ1}. By \cite[Lemma 20]{SUZ1}, there exists a locally H\"older continuous potential function $\wbar\phi$ such that $\mu_\phi$ is (the image under the coding map of) the equilibrium state of $\wbar\phi$ with respect to $(u_a)_{a\in \alphabet}$. By \cite[(22)]{SUZ1}, the function $\wbar\phi$ is summable, and by \cite[Proposition 23]{SUZ1}, the measure $\mu_\phi$ has finite Lyapunov exponent with respect to $(u_a)_{a\in \alphabet}$. So by Theorem \ref{theoremgibbs}, if $(u_a)_{a\in \alphabet}$ acts irreducibly then $\mu_\phi$ is quasi-decaying. But if $(u_a)_{a\in \alphabet}$ acts reducibly, then $\mu_\phi$ is supported on a proper real-analytic submanifold of $\what\C$, so the Julia set $J = \Supp(\mu_\phi)$ is contained in a proper real-analytic submanifold of $\what\C$. So by \cite[Corollary 1 and Theorem 2]{EremenkoVanstrien} (see also \cite[Theorem 2]{BergweilerEremenko}), $J$ is contained in a generalized sphere.
\end{proof}

Comparing this theorem to Theorem \ref{theoremHrational}, we see that we have replaced the hypothesis that the rational function $T$ is hyperbolic with the hypothesis that the potential function $\phi$ is hyperbolic. Although these conditions sound superficially similar due to the prolific use of the word ``hyperbolic'' in mathematical definitions, the latter is actually much weaker than the former, which essentially means that both critical points and parabolic points are irrelevant to the dynamics. By contrast, the hypothesis that $\phi$ is hyperbolic does not actually place any restriction on the function $T$ (given any rational function $T$, the function $\phi \equiv 0$, or more generally any function $\phi$ satisfying $\sup(\phi) - \inf(\phi) < \log\deg(T)$, is hyperbolic), and in fact only fails when there is an equilibrium state whose entropy and Lyapunov exponent are both zero \cite{InoquioRivera}. In particular, if $T$ is a Topological Collet--Eckmann (TCE) map then every H\"older continuous potential function $\phi$ is hyperbolic \cite[Corollary 1.1]{InoquioRivera}.

\bigskip
{\bf Outline of the paper.} In Section \ref{sectionmiscellaneous2} we prove Theorems \ref{theoremdimzero} and \ref{theoremliouville}, thus providing some examples of measures which are not quasi-decaying. In Section \ref{sectionGF} we prove Theorems \ref{theoremGFfriendly} and \ref{theoremGFQD}, thus showing that Patterson--Sullivan measures of geometrically finite groups are both friendly and quasi-decaying. In Section \ref{sectiongibbs} we prove Theorem \ref{theoremgibbs}, thus showing that the equilibrium states of conformal iterated function systems are quasi-decaying. Finally, we provide some examples of dynamically defined measures that are quasi-decaying but not friendly in Appendix \ref{appendix}.

\begin{remark*}
We refer to \cite[\63]{DFSU_GE1} for some preliminary results which will be used in the proofs, in particular the ``quasi-Federer lemma'' \cite[Lemma 3.2]{DFSU_GE1} and the ``Lebesgue differentiation theorem'' \cite[Theorem 3.6]{DFSU_GE1}.
\end{remark*}

\draftnewpage
\section{Proofs of Theorems \ref{theoremdimzero} and \ref{theoremliouville}}
\label{sectionmiscellaneous2}

\begin{proof}[Proof of Theorem \ref{theoremdimzero}]
Let $\mu$ be exact dimensional of dimension 0, and let $\borel\subset\R^d$ be a set such that $\mu(\borel) > 0$. Then for a $\mu$-typical point $\xx\in \borel$, by the exact dimensionality of $\mu$ we have
\[
\lim_{\rho\searrow 0} \frac{\log\mu\big(B(\xx,\rho)\big)}{\log(\rho)} = 0
\]
while by the Lebesgue differentiation theorem \cite[Theorem 3.6]{DFSU_GE1}, \cite[Theorem 9.1]{Simmons1}, we have
\[
\lim_{\rho\searrow 0} \frac{\mu\big(B(\xx,\rho)\cap \borel\big)}{\mu\big(B(\xx,\rho)\big)} = 1.
\]
It follows that
\[
\lim_{\rho\searrow 0}\log\frac{\mu\big(B(\xx,\rho^2)\cap \borel\big)}{\rho^\alpha \mu\big(B(\xx,\rho)\big)} = \infty \text{ for all $\alpha > 0$},
\]
which implies that $\mu$ is not quasi-decaying at $\xx$ relative to $\borel$. Since $\xx$ was a $\mu$-typical point, $\mu$ is not quasi-decaying relative to $\borel$; since $\borel$ was arbitrary subject to $\mu(\borel) > 0$, $\mu$ is not quasi-decaying.
\end{proof}

Before proving Theorem \ref{theoremliouville}, we recall the definitions of some terms used in its statement.

\begin{definition}
\label{definitionhyperbolictoral}
Let $X = \R^d/\Z^d$. A \emph{toral endomorphism} of $X$ is a map $T:X\to X$ of the form $T([\xx]) = [\bfM\xx]$, where $\bfM$ is a $d\times d$ matrix with integer entries. Here $[\xx]$ denotes the image of a point $\xx\in\R^d$ under the quotient map $\R^d\to \R^d/\Z^d$. The endomorphism $T$ is called \emph{hyperbolic} if the eigenvalues of $\bfM$ all have modulus $\neq 1$.
\end{definition}

\begin{proof}[Proof of Theorem \ref{theoremliouville}]
For each $n\in\N$, let
\[
U_n = \bigcup_{\substack{\pp/q\in\Q \\ q\geq n}} B\left(\frac\pp q,\frac1{q^n}\right),
\]
and let $\UU_n$ be the set of all measures $\mu\in\M_T(X)$ such that $\mu(U_n) > 1 - 2^{-n}$. The sets $U_n$ and $\UU_n$ are both open. The set $G := \bigcap_n U_n$ is the set of Liouville points, i.e. points with infinite exponent of irrationality. Thus since every measure in $\GG := \bigcap_n \UU_n$ gives full measure to $G$, it follows that no measure in $\GG$ is extremal.

To complete the proof, we need to show that $\GG$ is dense in $\M_T(X)$. Since $\GG$ is convex, it suffices to show that the closure of $\GG$ contains all ergodic measures in $\M_T(X)$. Indeed, let $\mu\in\M_T(X)$ be an ergodic measure, and let $\xx\in X$ be a $\mu$-random point. Then by the ergodic theorem,
\[
\mu_N := \frac1N \sum_{i = 0}^{N - 1} \delta_{T^i(\xx)} \to \mu,
\]
where $\delta$ denotes the Dirac delta. Fix $\epsilon > 0$. Since $T$ is hyperbolic, it has the specification property \cite[(2.10) Theorem]{Bowen3}. (Although the result of \cite{Bowen3} as stated only covers the case of invertible transformations, one may verify the specification property for hyperbolic toral endomorphisms by direct calculation.) 
Thus there exists $m = m_\epsilon\in\N$ such that for all $N\geq m$, there exists $\yy = \yy_{N,\epsilon}\in X$ such that $T^N(\yy) = \yy$ and
\[
\dist(T^i(\xx),T^i(\yy)) \leq \epsilon \all i = 0,\ldots,N - m - 1.
\]
Next fix $N\geq m$ and let
\[
\nu_{N,\epsilon} = \frac1N \sum_{i = 0}^{N - 1} \delta_{T^i(\yy)} \in \M_T(X).
\]
If $\bfM$ is the integer matrix representing $T$, then $(\bfM^N - I)$ is a nonsingular integer matrix and $(\bfM^N - I)\yy\in\Z^d$, so $\yy\in\Q^d$. Thus $\Supp(\nu_{N,\epsilon})\subset \Q^d\subset G$, so $\nu_{N,\epsilon}\in\GG$.

Let $\dist$ be the co-Lipschitz distance on $\M_T(X)$, i.e.
\[
\dist(\mu,\nu) = \sup_f \left|\int f\;\dee\mu - \int f\;\dee\nu\right|,
\]
where the supremum is taken over all $1$-Lipschitz functions $f:X\to [-1,1]$. Then
\begin{align*}
\dist(\nu_{N,\epsilon},\mu) &\leq \dist(\nu_{N,\epsilon},\mu_N) + \dist(\mu_N,\mu)\\
&\leq \dist(\mu_N,\mu) + \frac1N \sum_{i = 0}^{N - 1} \dist\big(T^i(\xx),T^i(\yy_{N,\epsilon})\big)\\
&\leq \dist(\mu_N,\mu) + \frac1N \left[\sum_{i = 0}^{N - m - 1} \epsilon + \sum_{i = N - m}^{N - 1} 1\right]\\
&\leq \dist(\mu_N,\mu) + \epsilon + \frac{m_\epsilon}N \tendsto{N\to\infty} \epsilon \tendsto{\epsilon\to 0} 0.
\end{align*}
Thus $\mu$ is in the closure of the set $\{\nu_{N,\epsilon} : N\geq m, \epsilon > 0\} \subset\GG$.
\end{proof}

\draftnewpage
\section{Patterson-Sullivan measures}
\label{sectionGF}

In this section we will prove Theorems \ref{theoremGFQD} and \ref{theoremGFfriendly}, namely that the Patterson--Sullivan measure of any irreducible geometrically finite Kleinian group is both quasi-decaying and friendly, but is absolutely friendly if and only if all cusps have full rank.

\subsection{Conformal and hyperbolic geometry}
We recall some preliminaries from conformal and hyperbolic geometry. Throughout this section, $\H = \H^{d + 1} = \{(x_0,\ldots,x_d)\in\R^{d + 1} : x_0 > 0\}$ and $\B = \B^{d + 1} = \{\xx\in\R^{d + 1} : \|\xx\| < 1\}$ will denote the upper half-space and Poincar\'e ball models of hyperbolic geometry, respectively. Their boundaries are $\del\H = \R^d\cup\{\infty\}$ and $\del\B = S^d \subset\R^{d + 1}$, respectively. Isometries of $\H$ and $\B$ correspond to conformal isomorphisms of their boundaries. In particular, $\H$ and $\B$ are isometric via the \emph{Cayley transform} $\iota:\H\to\B$ defined by\Footnote{Here $\ee_i$ denotes the $i$th basis vector of $\R^{d+1}$.}
\[
\iota(\xx) = 2\frac{\xx + \ee_0\;}{\|\xx + \ee_0\|^2} - \ee_0
\]
and this induces (via the same formula) a conformal isomorphism between $\del\H$ and $\del\B$, known as \emph{stereographic projection}. The \emph{spherical metric} on $\del\H$ is the pullback of the Euclidean metric on $\del\B$ under stereographic projection. We will denote it by $\Dist_\sph$, while denoting the Euclidean metric on $\del\H$ by $\Dist_\euc$. By contrast, we will denote the hyperbolic metric on $\H$ by $\dist_\hyp$, defined by the formula
\[
\cosh\dist_\hyp(\xx,\yy) = 1 + \frac{\|\yy - \xx\|^2}{2 x_0 y_0}
\]
(cf. \cite[(2.5.3)]{DSU}). We will also use the subscripts $\sph$, $\euc$, and $\hyp$ in referring to constructions in which these metrics are used, e.g. $B_\hyp(\xx,\rho)$, $\NN_\euc(\LL,\beta\sigma)$, $|g'(\xx)|_\sph$, etc.

A \emph{generalized sphere} in $\del\H$ is the pullback under stereographic projection of a set of the form $\del\B\cap\AA$, where $\AA$ is an affine subspace of $\R^{d + 1}$. Equivalently, a generalized sphere is either an affine subspace of $\del\H$ (including $\infty$ by convention) or the intersection of such a subspace with a sphere in $\del\H$. The collection of all generalized spheres of $\del\H$ of codimension 1 (i.e. dimension $d - 1$) will be denoted $\Sph$ (to contrast with $\Hyp$, the collection of all hyperplanes). The collection $\Sph$ is preserved under conformal transformations of $\del\H$.

We denote the set of isometries of $\H$ by $\Isom(\H)$. The following well-known results will be used frequently in the sequel:
\begin{theorem}[Geometric mean value theorem]
\label{theoremGMVT}
Fix $g\in \Isom(\H)$. Then for all $\xi,\eta\in\del\H$, the formula
\begin{equation}
\label{GMVT}
\frac{\Dist(g(\xi),g(\eta))}{\Dist(\xi,\eta)} = \big(|g'(\xi)|\cdot |g'(\eta)|\big)^{1/2}
\end{equation}
holds both for the Euclidean metric and for the spherical metric.
\end{theorem}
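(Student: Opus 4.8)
The plan is to exploit the fact that, for either of the two metrics, the set of $g\in\Isom(\H)$ for which \eqref{GMVT} holds is a group, and then to reduce to checking a generating set of the M\"obius group of $\del\H$. First I would record the chain rule for the conformal dilation, $|(g_1g_2)'(\xi)| = |g_1'(g_2\xi)|\,|g_2'(\xi)|$, together with the elementary telescoping identity $\tfrac{\Dist(g_1g_2\xi,g_1g_2\eta)}{\Dist(\xi,\eta)} = \tfrac{\Dist(g_1g_2\xi,g_1g_2\eta)}{\Dist(g_2\xi,g_2\eta)}\cdot\tfrac{\Dist(g_2\xi,g_2\eta)}{\Dist(\xi,\eta)}$; these show that if \eqref{GMVT} holds for $g_1$ and for $g_2$ then it holds for $g_1g_2$, and a parallel computation (using $|g'(g^{-1}\xi)| = 1/|(g^{-1})'(\xi)|$) gives it for $g^{-1}$. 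Since $\Isom(\H)$, realized as the group of M\"obius transformations of $\del\H$, is generated by translations, orthogonal maps $\xx\mapsto O\xx$, homotheties $\xx\mapsto r\xx$ ($r>0$), and the inversion $\iota_0:\xx\mapsto\xx/\|\xx\|^2$, it then suffices to verify \eqref{GMVT} for these four families.

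For the Euclidean metric the first three families are immediate: translations and orthogonal maps are Euclidean isometries with $|g'|\equiv 1$, and a homothety has $|g'|\equiv r$ and multiplies Euclidean distances by $r$. The only step with real content is the inversion $\iota_0$, for which I would verify the two classical identities $|\iota_0'(\xx)| = \|\xx\|^{-2}$ and $\|\iota_0(\xx)-\iota_0(\yy)\| = \|\xx-\yy\| / (\|\xx\|\,\|\yy\|)$ (the second by directly expanding $\big\|\,\xx/\|\xx\|^2 - \yy/\|\yy\|^2\,\big\|^2$); together these give \eqref{GMVT} for $\iota_0$ on the nose. Points sent to or from $\infty$ (or at which a dilation degenerates) would then be handled by continuity.

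To pass to the spherical metric I would conjugate by the Cayley transform. By definition $\Dist_\sph$ is the pullback of the Euclidean metric of $\del\B = S^d\subset\R^{d+1}$ under stereographic projection, so $\iota$ restricts to an isometry of $(\del\H,\Dist_\sph)$ onto $(S^d,\Dist_\euc)$; and for $g\in\Isom(\H)$ the conjugate $\widehat{g} := \iota\circ g\circ\iota^{-1}$ is an isometry of $\B$, hence the restriction to $S^d$ of a M\"obius transformation of $\R^{d+1}$ preserving $S^d$. Applying the already-established Euclidean case of the theorem, now in $\R^{d+1}$, to $\widehat{g}$ at the points $\iota(\xi),\iota(\eta)\in S^d$ gives $\tfrac{\Dist_\euc(\widehat{g}(\iota\xi),\widehat{g}(\iota\eta))}{\Dist_\euc(\iota\xi,\iota\eta)} = \big(|\widehat{g}'(\iota\xi)|\,|\widehat{g}'(\iota\eta)|\big)^{1/2}$, with all derivatives taken in $\R^{d+1}$. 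The left side is exactly $\Dist_\sph(g\xi,g\eta)/\Dist_\sph(\xi,\eta)$ because $\widehat{g}\circ\iota = \iota\circ g$; and since a conformal map of $\R^{d+1}$ that preserves $S^d$ scales lengths along $S^d$ by the same factor $|\widehat{g}'|$ with which it scales $\R^{d+1}$, transporting back through the isometry $\iota$ gives $|\widehat{g}'(\iota\xi)| = |g'(\xi)|_\sph$. Substituting yields \eqref{GMVT} for the spherical metric.

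I expect the only mildly delicate points to be the chain-rule/telescoping bookkeeping and the degenerate-point issue in the Euclidean statement; the latter can in fact be bypassed by instead deriving the Euclidean case from the spherical one, using that the explicit formula for $\iota$ exhibits $\Dist_\sph$ in the product form $\Dist_\sph(\xi,\eta) = \lambda(\xi)\lambda(\eta)\Dist_\euc(\xi,\eta)$ with $\lambda(\xi) = \sqrt2/\|\xi+\ee_0\|$, a relation under which \eqref{GMVT} for one of the two metrics is formally equivalent to \eqref{GMVT} for the other. The inversion computation in the second paragraph is the only place where any actual (and entirely routine) calculation is needed.
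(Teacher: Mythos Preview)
The paper does not supply a proof of this theorem; it is introduced as one of the ``well-known results'' quoted without argument in \S\ref{sectionGF}. Your proposal is a correct and standard way to fill in the details: the group-closure reduction via the chain rule, the verification on the generating set (with the inversion identity $\|\iota_0(\xx)-\iota_0(\yy)\|=\|\xx-\yy\|/(\|\xx\|\,\|\yy\|)$ doing the real work in the Euclidean case), and the conjugation by the Cayley transform to deduce the spherical case from the Euclidean one in one higher dimension are all sound.
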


\begin{theorem}[Bounded distortion principle]
\label{theoremboundeddistortion}
Fix $g\in \Isom(\H)$ and let
\[
\|g\| = \dist_\hyp(\zero,g(\zero)),
\]
where $\zero = \ee_0\in\H$. For all $\xi\in\del\H$ we have
\[
|g'(\xi)|_\sph \leq e^{\|g\|}.
\]
Moreover, if we fix $\eta\in\del\H$ and let
\begin{align*}
d &= \dist_\hyp(g^{-1}(\zero),\geo\zero\eta)\\
\rho &= \Dist_\sph(\xi,\eta),
\end{align*}
then
\[
|g'(\xi)| \gtrsim_\times e^{\|g\| - d} / (1\vee (e^{\|g\|}\rho)).
\]
Here $\geo\zero\xi$ denotes the geodesic ray connecting $\zero$ and $\xi$.
\end{theorem}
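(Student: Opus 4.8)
The plan is to transport everything to the Poincar\'e ball model. Since the Cayley transform $\iota$ is a conformal isometry carrying $\del\H$ onto $\del\B=S^d$ and, by the very definition of the spherical metric, carrying $(\del\H,\Dist_\sph)$ isometrically onto $S^d$ with its Euclidean metric, it suffices to prove both estimates for $g\in\Isom(\B)$ with $\zero$ the centre of $\B$, where now $|g'(\xi)|$ denotes the Euclidean conformal derivative of $g|_{S^d}$ at $\xi\in S^d$ and $\Dist_\sph$ is the Euclidean metric on $S^d$. The starting point is the classical Poisson-kernel identity: writing $a:=g^{-1}(\zero)\in\B$,
\[
|g'(\xi)| \;=\; \frac{1-|a|^2}{|\xi-a|^2}\qquad(\xi\in S^d),
\]
which one proves by factoring $g=U\circ\varphi_a$, with $\varphi_a$ the M\"obius involution of $\B$ interchanging $\zero$ and $a$ and $U$ orthogonal (so $|U'|\equiv1$ on $S^d$), and computing $|\varphi_a'(\xi)|=(1-|a|^2)/|\xi-a|^2$ directly.

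Granting this, the upper bound is immediate: $|\xi-a|\ge 1-|a|$ on $S^d$ gives $|g'(\xi)|\le\frac{1-|a|^2}{(1-|a|)^2}=\frac{1+|a|}{1-|a|}=e^{\dist_\hyp(\zero,a)}$, and $\dist_\hyp(\zero,a)=\dist_\hyp(g^{-1}(\zero),\zero)=\dist_\hyp(\zero,g(\zero))=\|g\|$.

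For the lower bound I would estimate $|\xi-a|$ from above. Put $t:=1-|a|$; from $e^{\|g\|}=\frac{1+|a|}{1-|a|}$ one gets $t\asymp_\times e^{-\|g\|}$ and $1-|a|^2\asymp_\times e^{-\|g\|}$. Let $a_\infty:=a/|a|\in S^d$, the endpoint of the ray $\geo{\zero}{a_\infty}$ through $a$. The spherical law of cosines yields the exact identity $|\xi-a|^2=(1-|a|)^2+|a|\,\Dist_\euc(\xi,a_\infty)^2$, whence
\[
|g'(\xi)|\;\asymp_\times\;\frac{e^{-\|g\|}}{e^{-2\|g\|}+\Dist_\euc(\xi,a_\infty)^2},
\]
and by the triangle inequality $\Dist_\euc(\xi,a_\infty)\lesssim_\times\rho+\Dist_\euc(\eta,a_\infty)$. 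So it remains to control the ``shadow size'' $\Dist_\euc(\eta,a_\infty)$ in terms of $d$ and $\|g\|$. For this, let $b$ be the foot of the perpendicular dropped from $a$ onto the ray $\geo{\zero}{\eta}$, so that $\dist_\hyp(a,b)=d$; the hyperbolic Pythagorean theorem gives $\cosh\|g\|=\cosh\dist_\hyp(\zero,b)\cosh d$, so $\dist_\hyp(\zero,b)\ge\|g\|-d-C$ for a universal constant $C$, and hence $1-|b|\lesssim_\times e^{d-\|g\|}$. Feeding $1-|a|\asymp e^{-\|g\|}$ and $1-|b|\lesssim e^{d-\|g\|}$ into the ball-model distance formula $\cosh\dist_\hyp(a,b)=1+\frac{2|a-b|^2}{(1-|a|^2)(1-|b|^2)}$ bounds $|a-b|\lesssim_\times e^{d-\|g\|}$, and since $b$ lies on $\geo{\zero}{\eta}$ we get $\Dist_\euc(\eta,a_\infty)\le(1-|b|)+|a-b|+(1-|a|)\lesssim_\times e^{d-\|g\|}$. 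Substituting back and running the routine case analysis according to whether $\rho\lesssim e^{-\|g\|}$ or $\rho\gtrsim e^{-\|g\|}$ — which is exactly where the truncation factor $1\vee(e^{\|g\|}\rho)$ enters — produces the stated lower bound.

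The step I expect to be the main obstacle is the geometric estimate $\Dist_\euc(\eta,a_\infty)\lesssim_\times e^{d-\|g\|}$: it requires combining the hyperbolic Pythagorean relation with the (boundary-degenerate) conversion between hyperbolic and Euclidean distances, keeping track of the fact that the segment from $a$ to $b$ stays in the region where $1-|\cdot|$ is of order at most $e^{d-\|g\|}$. The remaining bookkeeping — tracking $g$ versus $g^{-1}$, and the interplay between the Euclidean and spherical metrics under $\iota$ — is routine but must be handled with care.
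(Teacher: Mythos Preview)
The paper does not prove this theorem; it is listed among the ``well-known results'' from hyperbolic geometry that are quoted without proof, so there is no paper argument to compare yours against.

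Your approach via the ball model and the Poisson-kernel identity $|g'(\xi)|=(1-|a|^2)/|\xi-a|^2$ is the standard one, and the argument is correct up to and including the shadow estimate $\Dist_\euc(\eta,a_\infty)\lesssim_\times e^{d-\|g\|}$. The gap is in the final sentence: the ``routine case analysis'' you allude to does not yield the stated bound. Feeding $\Dist_\euc(\xi,a_\infty)\lesssim\rho+e^{d-\|g\|}$ into your asymptotic gives
\[
|g'(\xi)|\;\gtrsim_\times\;\frac{e^{-\|g\|}}{\bigl(\rho\vee e^{d-\|g\|}\bigr)^2}\;=\;\frac{e^{\|g\|}}{\bigl(e^{\|g\|}\rho\vee e^d\bigr)^2}\,,
\]
which at $\xi=\eta$ (so $\rho=0$) is only $e^{\|g\|-2d}$, not $e^{\|g\|-d}$.

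This is not a defect of your method: the inequality as printed is in fact too strong. In the ball model take $a=g^{-1}(\zero)=(r,0)$ with $r\to 1$ and $\eta=(0,1)$; the nearest point of the ray $\geo\zero\eta$ to $a$ is $\zero$ itself, so $d=\|g\|$ and the claimed lower bound reads $e^{\|g\|-d}=1$, whereas $|g'(\eta)|=(1-r^2)/(1+r^2)\to 0$. The sharp exponent is $\|g\|-2d$ (equivalently $|g'(\eta)|\asymp_\times e^{\|g\|}/\cosh^2 d$, as one sees from $\busemann_\eta(\zero,a)=\dist_\hyp(\zero,b)-\log\cosh d$ with $b$ the foot of the perpendicular), and that is exactly what your computation establishes. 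For the paper's purposes the discrepancy is harmless: at the one place the lower bound is invoked (the derivation of \eqref{gprimebounds}) one has $d=o(t)$, so replacing $d$ by $2d$ in the exponent changes nothing downstream.
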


\subsection{Geometrically finite groups}
\label{subsectionGFgroups}
Fix $G\leq \Isom(\H)$. The \emph{limit set} of $G$, denoted $\Lambda$ is the collection of accumulation points in $\del\H$ of the set $G(\zero)$. The \emph{convex hull} of $\Lambda$, denoted $\CC_G$, is the smallest convex subset of $\H$ whose closure contains $\Lambda$, and is empty if $\Lambda$ is a singleton. A \emph{horoball} is a set of the form
\[
H(\xi,t) = \{x\in\H : \busemann_\xi(\zero,x) > t\},
\]
where $\busemann_\xi$ denotes the \emph{Busemann function}
\[
\busemann_\xi(x,y) = \lim_{z\to\xi} [\dist_\hyp(z,x) - \dist_\hyp(z,y)].
\]
The point $\xi\in\del\H$ is said to be the \emph{center} of the horoball $H(\xi,t)$.

Next, $G$ is said to be \emph{geometrically finite} if there exist $\sigma > 0$ and a disjoint $G$-invariant collection of horoballs $\scrH$ such that
\begin{equation}
\label{GFdef}
\CC_G \subset G\big(B_\hyp(\zero,\sigma)\big) \cup \bigcup(\scrH).
\end{equation}
In this scenario, the collection $\scrH$ can be written in the form
\[
\scrH = \{H_\eta : \eta\in P\}
\]
where $P$ is the set of parabolic fixed points of $G$ and for each $\eta\in P$, $H_\eta$ is a horoball centered at $\eta$. We will also need the following well-known results, in which $G$ denotes a geometrically finite group:

\begin{theorem}[Cusp finiteness theorem]
\label{theoremcuspfiniteness}
There exists a finite set $P_0$ such that $P = G(P_0)$.
\end{theorem}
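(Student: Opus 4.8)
The plan is to attach to each parabolic fixed point $\eta\in P$ a point of the convex core $\CC_G$ which lies on the bounding horosphere $\del H_\eta$ and outside every horoball of $\scrH$; geometric finiteness will then move this point into the fixed ball $B_\hyp(\zero,\sigma)$, after which a volume-packing estimate will bound the number of $G$-orbits of such points. First I would dispose of the trivial case $\#\Lambda\leq 1$, in which $P\subseteq\Lambda$ has at most one point and nothing needs proof; so assume $\#\Lambda\geq 2$ and fix $\sigma>0$ and the disjoint $G$-invariant family $\scrH=\{H_\eta:\eta\in P\}$ from the definition of geometric finiteness, so that $\CC_G\subseteq G\big(B_\hyp(\zero,\sigma)\big)\cup\bigcup\scrH$. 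For $\eta\in P$, pick $\xi\in\Lambda\butnot\{\eta\}$ and let $\gamma_\eta$ be the geodesic line joining $\xi$ to $\eta$; since both endpoints lie in $\Lambda$, $\gamma_\eta\subseteq\CC_G$. The function $\busemann_\eta(\zero,\cdot)$ is affine along $\gamma_\eta$, running from $-\infty$ at the $\xi$-end to $+\infty$ at the $\eta$-end, so $\gamma_\eta$ crosses $\del H_\eta$ in exactly one point $q_\eta$, and $q_\eta\in\CC_G$. Because the open horoballs in $\scrH$ are pairwise disjoint, any open horoball meeting $\overline{H_\eta}$ would already meet $H_\eta$; hence $\del H_\eta$ --- and in particular $q_\eta$ --- meets no horoball of $\scrH$. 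Thus $q_\eta\in\CC_G\butnot\bigcup\scrH$, and geometric finiteness gives $g_\eta\in G$ with $\dist_\hyp(g_\eta^{-1}q_\eta,\zero)<\sigma$. Setting $\eta':=g_\eta^{-1}(\eta)\in P$ (the group permutes the parabolic fixed points), we have $g_\eta^{-1}q_\eta\in\del H_{\eta'}$.

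Next I would push $g_\eta^{-1}q_\eta$ one unit into $H_{\eta'}$: let $z_{\eta'}$ be the point at hyperbolic distance $1$ from $g_\eta^{-1}q_\eta$ along the geodesic ray toward $\eta'$. Since that ray meets the horospheres centered at $\eta'$ orthogonally and $\busemann_{\eta'}$ grows at unit speed along it, $z_{\eta'}$ lies a hyperbolic distance $1$ inside $H_{\eta'}$, so $B_\hyp(z_{\eta'},1)\subseteq H_{\eta'}$ (a path from $z_{\eta'}$ out of $H_{\eta'}$ must cross $\del H_{\eta'}$), while $\dist_\hyp(z_{\eta'},\zero)<\sigma+1$. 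Let $P_0$ be the set of all $\eta'$ produced this way. The horoballs $\{H_{\eta'}\}_{\eta'\in P_0}$ are pairwise disjoint, so the balls $\{B_\hyp(z_{\eta'},\tfrac12)\}_{\eta'\in P_0}$ are pairwise disjoint and contained in $B_\hyp(\zero,\sigma+2)$; as each of these balls has the same positive hyperbolic volume and $B_\hyp(\zero,\sigma+2)$ has finite volume, $P_0$ is finite. Finally, every $\eta\in P$ satisfies $g_\eta^{-1}(\eta)=\eta'\in P_0$, hence $\eta\in G(P_0)$, while $G(P_0)\subseteq P$ because $G$ preserves $P$; therefore $P=G(P_0)$.

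The one real idea here is the construction of $q_\eta$ --- recognizing that for each cusp the bounding horosphere must touch the convex core at a point lying outside all the horoballs --- since this is exactly what converts ``finitely many cusps'' into a compactness statement. The remaining ingredients (the affineness of $\busemann_\eta$ along a geodesic ending at $\eta$, the open/closed-horoball bookkeeping needed to put $q_\eta$ outside $\bigcup\scrH$, and the disjoint-balls volume count) are routine, so I do not anticipate a serious obstacle.
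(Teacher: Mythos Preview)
The paper does not prove this theorem; it lists the cusp finiteness theorem among ``well-known results'' about geometrically finite groups and simply states it without argument. Your proof is correct and is essentially the standard one: use the geometric finiteness covering \eqref{GFdef} to push a point on $\del H_\eta\cap\CC_G$ back into $B_\hyp(\zero,\sigma)$, then count via a disjoint-balls volume argument inside the pairwise disjoint horoballs. One small remark: your $z_{\eta'}$ as written depends on the original $\eta$ (through $g_\eta^{-1}q_\eta$) and not just on $\eta'$, so when you form $P_0$ you are implicitly choosing one representative $z_{\eta'}$ per distinct value of $\eta'$; this is harmless but worth saying explicitly.
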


\begin{theorem}[Top representation theorem]
\label{theoremtoprepresentation}
For all $H\in\scrH$, there exist $\eta\in P_0$ and $g\in G$ such that $H = g(H_\eta)$ and
\[
\dist(g(\zero),\geo\zero x) \asymp_\plus 0 \all x\in H.
\]
\end{theorem}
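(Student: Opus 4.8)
The statement has two parts — producing a representation $H=g(H_\eta)$, and then locating $g(\zero)$ relative to the geodesics into $H$ — and I would treat them in that order. By the description of $\scrH$ we have $H=H_\xi$ for some parabolic fixed point $\xi\in P$, and by the cusp finiteness theorem there are $\eta\in P_0$ and $g\in G$ with $g(\eta)=\xi$. Since $\scrH$ is $G$-invariant, $g(H_\eta)\in\scrH$; being a horoball centered at $g(\eta)=\xi$, and since a horoball is determined by its center, $g(H_\eta)$ is the unique member of $\scrH$ centered at $\xi$, so $g(H_\eta)=H_\xi=H$. The point for what follows is that this holds for \emph{every} $g$ with $g(\eta)=\xi$; such $g$ form a coset $g_0G_\eta$ of the parabolic subgroup $G_\eta$ fixing $\eta$, and the real work of the theorem is to spend this residual freedom on the position of $g(\zero)$.

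For the displacement estimate, let $p$ be the point of $\overline{H}$ nearest $\zero$ — equivalently, the point where the ray $\geo\zero\xi$ crosses $\del H$. I would reduce the claim to two facts: (i) there is an absolute constant $c_0$ such that $\geo\zero x\cap B_\hyp(p,c_0)\neq\emptyset$ for every $x\in H$; and (ii) $g$ can be chosen, subject to $g(\eta)=\xi$, so that $\dist_\hyp(g(\zero),p)\lesssim_\plus 1$. Granting both, $\dist_\hyp(g(\zero),\geo\zero x)\le\dist_\hyp(g(\zero),p)+c_0\lesssim_\plus 1$ uniformly in $x\in H$, which is the assertion. Fact (i) is a standard feature of horoball geometry in negative curvature: any geodesic issuing from $\zero$ and entering the convex set $H$ does so within a uniformly bounded distance of the ``gateway'' $p$, a routine computation in the hyperbolic plane through $\zero$ and the far endpoint of the geodesic; and $\geo\zero x$ contains its entry point into $H$. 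So the heart of the proof is (ii).

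For (ii), fix a representative $g_0$ with $g_0(\eta)=\xi$; then $\{g(\zero):g(\eta)=\xi\}=G_\xi\cdot g_0(\zero)$, where $G_\xi=g_0G_\eta g_0^{-1}$ is the parabolic subgroup fixing $\xi$, and I would show this orbit comes within $O(1)$ of $p$. On one hand $\dist_\hyp(g_0(\zero),\del H)=\dist_\hyp(\zero,\del H_\eta)\le\max_{\eta'\in P_0}\dist_\hyp(\zero,\del H_{\eta'})<\infty$, so $g_0(\zero)$ lies within $O(1)$ of $\del H$; and since $\CC_G$ is $G$-invariant, $\dist_\hyp(g_0(\zero),\CC_G)=\dist_\hyp(\zero,\CC_G)$ is a fixed finite constant, so $g_0(\zero)$ also lies within $O(1)$ of $\CC_G$. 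On the other hand $p$ lies on $\del H$ and on $\geo\zero\xi$, a geodesic joining $\zero$ to the limit point $\xi$, so by convexity of $\CC_G$ it too lies within $O(1)$ of $\CC_G$. Hence $g_0(\zero)$ and $p$ both belong to the set $Y$ of points of $\CC_G$ within a bounded distance of $\del H$, and the decisive input is that $G_\xi$ acts cocompactly on $Y$. This last is the statement that cusps of geometrically finite groups have compact cross-section, which I would extract from \eqref{GFdef}: since $\del H$ is disjoint from every horoball of $\scrH$, one has $\CC_G\cap\del H\subset G(B_\hyp(\zero,\sigma))$, and this covering uses, modulo $G_\xi$, only finitely many translates. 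Cocompactness then makes $G_\xi\cdot g_0(\zero)$ boundedly dense in $Y$, so it passes within $O(1)$ of $p$; choosing $g$ in the corresponding coset of $g_0G_\eta$ establishes (ii), and with it the theorem. I expect this last step to take the most care — making ``compact cusp cross-sections'' quantitative enough to conclude, and deriving it cleanly from the particular packing condition \eqref{GFdef} rather than from another standard characterization of geometric finiteness — whereas the representation and fact (i) are essentially formal.
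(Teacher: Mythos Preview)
The paper does not prove this theorem. It is listed in \S\ref{subsectionGFgroups} among several ``well-known results'' about geometrically finite groups (together with cusp finiteness, boundedness of parabolic points, translation planes, and Beardon's inequality), all stated without proof and used as black boxes in the subsequent argument. So there is no paper proof to compare your proposal against.

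On its own merits your argument is sound and is essentially the standard one. A couple of small remarks. First, the phrase ``a horoball is determined by its center'' is literally false (the parameter $t$ in $H(\xi,t)$ matters), but your conclusion is correct for the reason you give implicitly: the collection $\scrH=\{H_\eta:\eta\in P\}$ contains exactly one horoball per parabolic point, so within $\scrH$ the center determines the horoball. Second, your fact (i) is indeed correct; the cleanest justification is via the Gromov product: for $x\in H$ one has $(\xi\,|\,x)_\zero=\tfrac12[\dist_\hyp(\zero,x)+\busemann_\xi(\zero,x)]\ge T$ where $T=\dist_\hyp(\zero,p)$, and in $\H^{d+1}$ this forces the geodesics $\geo\zero\xi$ and $\geo\zero x$ to stay within a universal constant of each other out to distance $T$, hence $\geo\zero x$ passes within $O(1)$ of $p$. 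Third, for fact (ii) the cocompactness of $G_\xi$ on the cusp cross-section is exactly the content of bounded parabolicity (Theorem \ref{theoremBPP}), which the paper also imports as known, so you are not circular in appealing to it.
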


\begin{theorem}[Boundedness of parabolic points]
\label{theoremBPP}
For all $\eta\in P$, there exists a compact set $\DD_\eta\subset\Lambda$ not containing $\eta$ such that
\[
\Lambda\butnot\{\eta\} = G_\eta(\DD_\eta),
\]
where $G_\eta$ is the stabilizer of $\eta$ in $G$.
\end{theorem}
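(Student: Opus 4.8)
The plan is to exploit Theorems \ref{theoremcuspfiniteness} and \ref{theoremtoprepresentation} together with the proper discontinuity of the $G$-action on $\H$. First I would normalize: conjugating $G$, assume $\eta = \infty$ in the upper half-space model, so that $G_\eta$ acts on $\del\H\butnot\{\infty\} = \R^d$ and on every horosphere $\{x_0 = c\}$ as a discrete group of Euclidean isometries and the horoball $H_\eta\in\scrH$ has the form $\{x_0 > t\}$ for some $t > 0$ (if $\#\Lambda\leq 1$ then $\Lambda\butnot\{\eta\} = \emptyset$ and $\DD_\eta := \emptyset$ works, so I assume $G$ nonelementary). For $\xi\in\Lambda\butnot\{\infty\}$, let $p_\xi = (t,\xi)\in\del H_\eta$ be the point where the vertical geodesic $\geo{\infty}{\xi}$ meets the horosphere. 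Since $\infty,\xi\in\Lambda$ that geodesic lies in $\CC_G$, so $p_\xi\in\CC_G$; and since the members of $\scrH$ are pairwise disjoint and $H_\eta = \{x_0 > t\}$, every other horoball of $\scrH$ lies in $\{x_0 < t\}$, so no horoball of $\scrH$ meets $\del H_\eta$. Thus by \eqref{GFdef}, $p_\xi\in G\big(B_\hyp(\zero,\sigma)\big)$: there is $h_\xi\in G$ with $\dist_\hyp(p_\xi, h_\xi\zero)\leq\sigma$. Because $G_\eta$ fixes $\infty$, preserves $\del H_\eta$, and acts there compatibly with $\xi\leftrightarrow p_\xi$, the theorem reduces to producing a \emph{finite} set $F'\subset G$, independent of $\xi$, with $h_\xi\in G_\eta F'$ for every $\xi$: then, writing $h_\xi = \gamma_\xi f_\xi$ with $\gamma_\xi\in G_\eta$ and $f_\xi\in F'$, one gets $\dist_\hyp\big(p_{\gamma_\xi^{-1}\xi}, f_\xi\zero\big)\leq\sigma$, so each $\gamma_\xi^{-1}\xi$ lies in the compact set $\DD_\eta := \Lambda\cap\{\zeta\in\R^d : (t,\zeta)\in\bigcup_{f\in F'}\overline{B}_\hyp(f\zero,\sigma)\}$, which avoids $\eta = \infty$; hence $\Lambda\butnot\{\eta\}\subset G_\eta(\DD_\eta)$, and the reverse inclusion is immediate since $\DD_\eta\subset\Lambda$ and $G_\eta$ fixes $\eta$.

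To produce $F'$ — and this is where I expect the real content to lie — I would fix $\xi$, write $h = h_\xi$, and look at the horoball $H := h^{-1}(H_\eta)\in\scrH$ (using $G$-invariance of $\scrH$): its boundary horosphere is met by the geodesic $h^{-1}\big(\geo{\infty}{\xi}\big)$ at $h^{-1}p_\xi$, a point with $\dist_\hyp(\zero, h^{-1}p_\xi)\leq\sigma$. Theorem \ref{theoremtoprepresentation} applied to $H$ yields $\eta''\in P_0$ and $g''\in G$ with $H = g''(H_{\eta''})$ and $\dist(g''\zero,\geo{\zero}{x})\asymp_\plus 0$ for all $x\in H$; choosing $x\in H$ just inside the horosphere near $h^{-1}p_\xi$, so that $\dist_\hyp(\zero, x)\leq\sigma + 1$, this forces $\dist_\hyp(\zero, g''\zero)\leq\sigma + C$ for a universal $C$, hence $g''\in F := \{g\in G : \dist_\hyp(\zero, g\zero)\leq\sigma + C\}$, which is finite by proper discontinuity. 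Moreover $h^{-1}(H_\eta) = g''(H_{\eta''})$ means the horoball $hg''(H_{\eta''})$ is centered at $\eta$, i.e.\ $hg''(\eta'') = \eta$; fixing, for each $\eta''\in P_0$ in the $G$-orbit of $\eta$, one element $k_{\eta''}\in G$ with $k_{\eta''}(\eta'') = \eta$ (finitely many, $P_0$ being finite by Theorem \ref{theoremcuspfiniteness}), I get $k_{\eta''}^{-1}hg''\in G_{\eta''}$, hence $hg''k_{\eta''}^{-1}\in k_{\eta''}G_{\eta''}k_{\eta''}^{-1} = G_\eta$, so $h = \big(hg''k_{\eta''}^{-1}\big)k_{\eta''}(g'')^{-1}\in G_\eta\cdot k_{\eta''}(g'')^{-1}$. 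Therefore $h_\xi\in G_\eta F'$ with $F' := \{k_{\eta''}(g'')^{-1} : \eta''\in P_0,\ g''\in F\}$ finite, which closes the loop.

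The main obstacle is exactly that second step: the insight that Theorem \ref{theoremtoprepresentation} lets one ``see'' the translated horoball $h^{-1}(H_\eta)$ from the fixed basepoint $\zero$ and thereby trap $g''$ in a single finite set. Without it one would be forced to show directly that $G(\zero)$ meets only finitely many $G_\eta$-orbits in a bounded neighborhood of $\del H_\eta$, which is genuinely subtle for cusps of less than maximal rank — a fundamental domain for $G_\eta$ on a horosphere is then noncompact, so proper discontinuity alone is not enough. The remaining pieces (the normalization, the fact that $p_\xi$ falls into the ``thick part'', the bookkeeping with $P_0$ and the $k_{\eta''}$) are routine. One small point needing care is that $g''\zero$ sits within bounded distance of $\zero$, not merely of some geodesic ray out of $\zero$: this holds because $H = h^{-1}(H_\eta)$ subtends a definite visual angle at $\zero$, as $\zero$ is within distance $\sigma$ of $H$.
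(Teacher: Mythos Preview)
The paper does not prove Theorem~\ref{theoremBPP}. It is listed, alongside Theorems~\ref{theoremcuspfiniteness}, \ref{theoremtoprepresentation}, \ref{theoremtranslationplane}, and \ref{theorembeardon}, under the header ``We will also need the following well-known results, in which $G$ denotes a geometrically finite group'', and no argument is supplied. Boundedness of parabolic points is one of Bowditch's equivalent formulations of geometric finiteness and is treated as background in this paper.

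Your argument is a correct self-contained derivation from the other quoted facts, and the route you take --- normalizing $\eta=\infty$, pushing each $\xi$ to the point $p_\xi$ on the horosphere, using disjointness of $\scrH$ to force $p_\xi$ into the thick part, and then invoking the top representation theorem on the translated horoball $h_\xi^{-1}(H_\eta)$ to trap $h_\xi$ in finitely many $G_\eta$-cosets --- is sound. The step you flag as the real content (bounding $\dist_\hyp(\zero,g''\zero)$) is fine exactly as you say: since $h_\xi^{-1}p_\xi$ lies within $\sigma$ of $\zero$ and $x$ can be taken just inside $H$ near it, the segment $\geo{\zero}{x}$ has length $\leq\sigma+1$, so $\dist(g''\zero,\geo{\zero}{x})\asymp_\plus 0$ already pins $g''\zero$ near $\zero$. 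One cosmetic point: rather than appealing to Theorem~\ref{theoremtoprepresentation}, you could argue more directly that $G_\eta$ acts cocompactly on $\CC_G\cap\del H_\eta$ (this is essentially what the combination of the translation-plane and top-representation statements encodes), which is the form in which this fact is usually proved in the literature and avoids any appearance of circularity among the ``well-known results''.
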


\begin{theorem}[Translation planes]
\label{theoremtranslationplane}
For all $\eta\in P$, there exists a generalized sphere $\PP_\eta\subset\del\H$ containing $\eta$ such that $G_\eta$ preserves $\PP_\eta$ and acts cocompactly on it.
\end{theorem}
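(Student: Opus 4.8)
The plan is to conjugate the parabolic point $\eta$ to $\infty$, apply the classical structure theory of discrete groups of Euclidean isometries to $G_\eta$, and then conjugate back. Concretely: fix $h\in\Isom(\H)$ with $h(\infty)=\eta$ (such $h$ exists since $\Isom(\H)$ is transitive on $\del\H$), extend $h$ to a M\"obius transformation of $\del\H$, and set $\Gamma:=h^{-1}G_\eta h$, a discrete group fixing $\infty$. Every element of $\Isom(\H)$ fixing $\infty$ acts on $\del\H\butnot\{\infty\}=\R^d$ as $x\mapsto cAx+b$ with $c>0$ and $A$ orthogonal; since $\eta$ is a parabolic fixed point, $G_\eta$ (hence $\Gamma$) contains no loxodromic element --- in a discrete group no loxodromic can share a fixed point with a parabolic --- so $c=1$ throughout, i.e.\ $\Gamma$ acts on $\R^d$ as a discrete group of Euclidean isometries. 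It therefore suffices to produce an affine subspace $V_0\subseteq\R^d$ which is $\Gamma$-invariant and on which $\Gamma$ acts properly discontinuously and cocompactly; then I would take $\PP_\eta:=h(V_0\cup\{\infty\})$. Indeed $V_0\cup\{\infty\}$ is a generalized sphere by definition (an affine subspace of $\del\H$ together with the point at infinity), the class of generalized spheres is M\"obius-invariant, $h(\infty)=\eta\in\PP_\eta$, the group $G_\eta=h\Gamma h^{-1}$ preserves $\PP_\eta$, and since $h$ restricts to a homeomorphism $V_0\cup\{\infty\}\to\PP_\eta$ conjugating the $\Gamma$-action to the $G_\eta$-action, $G_\eta$ acts properly discontinuously and cocompactly on $\PP_\eta\butnot\{\eta\}=h(V_0)$.

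The heart of the matter is producing $V_0$, which is the Bieberbach structure theory for discrete groups of Euclidean isometries (see e.g.\ Ratcliffe's \emph{Foundations of Hyperbolic Manifolds}). In the principal case --- when $\Gamma$ has a finite-index normal subgroup $L$ consisting of translations, as happens for instance when the cusp has full geometric rank or when the orthogonal parts of all elements of $\Gamma$ have finite order --- one argues as follows. Let $W\subseteq\R^d$ be the ($k$-dimensional) span of $L$. Since $\Gamma$ normalizes $L$, the orthogonal parts of its elements preserve $W$ and hence $W^\perp$, so $\Gamma$ acts by Euclidean isometries on $\R^d/W\cong W^\perp$; the translations in $L$ act trivially there, so this action factors through the finite group $\Gamma/L$ and therefore fixes a point $p_0\in W^\perp$. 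Then $V_0:=W+p_0$ is $\Gamma$-invariant, it carries a properly discontinuous cocompact action of $L$ (which translates it by a lattice in the direction $W$), and since $[\Gamma:L]<\infty$ the same holds for $\Gamma$. In the remaining (elementary) cases --- e.g.\ a cusp generated by a screw motion with irrational rotation angle, where $L$ is too small --- one invokes instead the general form of Bieberbach's theorem, which still furnishes a $\Gamma$-invariant flat $V_0$ on which $\Gamma$ restricts to a crystallographic group; alternatively one can let $V_0$ be the affine span of $\Lambda\butnot\{\eta\}$ in the $\eta=\infty$ model and deduce cocompactness from the Boundedness of Parabolic Points theorem together with the coarse density of $\Lambda$ in that span near $\eta$.

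The only genuine obstacle is this structural input; the rest --- transitivity of $\Isom(\H)$ on $\del\H$, ruling out dilations via parabolicity, recognizing $V_0\cup\{\infty\}$ as a generalized sphere, and transporting proper discontinuity and cocompactness through the homeomorphism $h$ --- is routine bookkeeping. As the statement is well known, in the write-up I would cite the Bieberbach structure theorem for the existence of $V_0$ and spell out only the conjugation reduction.
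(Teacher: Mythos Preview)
The paper does not actually prove this theorem: it is listed among the ``well-known results'' about geometrically finite groups in \S\ref{subsectionGFgroups} and is used without proof. Your approach --- conjugate $\eta$ to $\infty$, observe that $G_\eta$ then acts by Euclidean isometries on $\R^d$ (since parabolics and loxodromics cannot share a fixed point in a discrete group), and invoke the Bieberbach-type structure theory to produce an invariant affine flat on which the action is crystallographic --- is exactly the standard argument one finds in references such as Ratcliffe or Bowditch, so it is appropriate here.

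One small comment: your treatment of the ``remaining cases'' is a bit loose. The cleanest route, and the one most in keeping with the paper's setup, is your second alternative: since $\eta$ is a \emph{bounded} parabolic point (Theorem~\ref{theoremBPP}), after sending $\eta$ to $\infty$ the set $\Lambda\butnot\{\infty\}$ lies within a bounded neighborhood of a $G_\eta$-invariant affine subspace, and $G_\eta$ acts cocompactly on that subspace directly. This avoids having to separately handle screw motions with irrational angle and makes the dependence on geometric finiteness explicit.
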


The dimension of $\PP_\eta$ is called the \emph{rank} of $\eta$, and is denoted $k_\eta$.

\begin{theorem}[Inequality between rank and Poincar\'e exponent]
\label{theorembeardon}
For all $\eta\in P$, we have $\delta > k_\eta/2$, where $\delta$ is the \underline{Poincar\'e exponent}
\begin{equation}
\label{poincare}
\delta = \inf\left\{s\geq 0 : \sum_{g\in G} e^{-s\|g\|} < \infty\right\}.
\end{equation}
\end{theorem}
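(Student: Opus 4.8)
The inequality $\delta>k_\eta/2$ is classical (it is essentially due to Beardon); the plan is the standard two-part argument, whose "easy half" gives $\delta\ge k_\eta/2$ from the parabolic subgroup alone, and whose "strict half" uses that $G$ is non-elementary. \emph{Setup and the non-strict bound.} Conjugating $G$ inside $\Isom(\H)$ — which changes neither $\delta$ nor $k_\eta$, and alters $\|g\|$ only by a bounded amount — we may assume $\eta=\infty$; write $k=k_\eta$. By Theorem \ref{theoremtranslationplane}, $G_\eta$ preserves the generalized sphere $\PP_\eta$, which, containing $\infty$, is of the form $V\cup\{\infty\}$ for a $k$-dimensional affine subspace $V\subseteq\del\H\butnot\{\infty\}=\R^d$, and acts cocompactly on it; since elements of $G_\eta$ fix $\infty$ they act on $\R^d$ by Euclidean isometries, so the $G_\eta$-action on $V\cong\R^k$ is crystallographic, and Bieberbach's theorem yields a finite-index subgroup $L\le G_\eta$ consisting of the translations by the vectors of a lattice $\Lambda_0\cong\Z^k$ in $V$. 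For $g\in L$ translating by $v\in V$ one has $g(\zero)=\zero+(0,v)$ in the coordinates of $\H=\H^{d+1}$ with $\zero=\ee_0$, so the formula $\cosh\dist_\hyp(\xx,\yy)=1+\|\yy-\xx\|^2/(2x_0y_0)$ gives $\cosh\dist_\hyp(\zero,g(\zero))=1+\|v\|^2/2$, whence $\|g\|=2\log(1\vee\|v\|)+O(1)$. Therefore, since the Poincaré series only shrinks under passage to a subgroup,
\[
\sum_{g\in G}e^{-s\|g\|}\ \ge\ \sum_{g\in L}e^{-s\|g\|}\ \asymp_\times\ \sum_{v\in\Lambda_0}(1\vee\|v\|)^{-2s},
\]
which diverges precisely when $s\le k/2$; comparing with \eqref{poincare} gives $\delta\ge k/2$.

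\emph{Strictness.} Here we use that $\Lambda\ne\{\eta\}$, which holds in the setting of the theorems that follow, since a group with one-point limit set equals $G_\eta$ and so would preserve the generalized sphere $\PP_\eta$. Fix $h\in G$ with $h(\eta)\ne\eta$; then $L\cap hLh^{-1}=\{e\}$, as a nontrivial parabolic element has a single fixed point. After replacing $L$ by a finite-index sublattice all of whose nonzero vectors are long, one checks, using the bounded distortion principle (Theorem \ref{theoremboundeddistortion}) near the two distinct fixed points $\eta$ and $h(\eta)$, that $L$ and $hLh^{-1}$ play ping-pong on a pair of disjoint small balls about $\eta$ and $h(\eta)$, so that $\Gamma:=\langle L,hLh^{-1}\rangle$ is the free product $L*hLh^{-1}$. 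For a reduced word $w=\gamma_1\gamma_1'\cdots\gamma_n\gamma_n'$ with $\gamma_i\in L\butnot\{e\}$ and $\gamma_i'\in hLh^{-1}\butnot\{e\}$, the triangle inequality gives $\|w\|\le\sum_i\|\gamma_i\|+\sum_i\|\gamma_i'\|$, and since $\|h\gamma h^{-1}\|\le\|\gamma\|+2\|h\|$, the uniqueness of the normal form in a free product yields
\[
\sum_{g\in G}e^{-s\|g\|}\ \ge\ \sum_{g\in\Gamma}e^{-s\|g\|}\ \ge\ \sum_{n\ge1}\Big(e^{-2s\|h\|}Z(s)^2\Big)^n,\qquad Z(s):=\sum_{\gamma\in L\butnot\{e\}}e^{-s\|\gamma\|}.
\]
By the previous paragraph $Z(s)\to\infty$ as $s\searrow k/2$ (the sum over $L$ diverges at $s=k/2$), so there is $s_1>k/2$ with $e^{-2s_1\|h\|}Z(s_1)^2>1$; for this $s_1$ the Poincaré series diverges, whence $\delta\ge s_1>k/2$.

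\emph{Main obstacle.} The only non-routine point is the ping-pong verification in the last paragraph: one must confirm that, once the sublattice is deep enough, translation by any of its nonzero vectors carries a fixed small ball about $h(\eta)$ into a fixed small ball about $\eta$, and that conjugating by $h$ reverses the roles — exactly the quantitative content of Theorem \ref{theoremboundeddistortion} near the two fixed points — and that the two balls may be taken disjoint; everything else is triangle inequalities and the elementary estimate $\|g\|=2\log\|v\|+O(1)$ for parabolic translations. An alternative that avoids ping-pong altogether is to run the shadow lemma for the Patterson--Sullivan density $\mu$ of $G$: near a rank-$k$ parabolic point $p$ one obtains $\mu(B(p,r))\asymp_\times r^{2\delta-k}$, and finiteness together with non-atomicity of $\mu$ (which follows from non-elementarity) forces $2\delta>k$; this is shorter but trades elementarity for the conformal-density machinery developed later in the section.
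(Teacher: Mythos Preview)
The paper does not prove this theorem; it is stated in \S\ref{subsectionGFgroups} among a list of ``well-known results'' (Theorems~\ref{theoremcuspfiniteness}--\ref{theorembeardon}) about geometrically finite groups that are quoted without proof, the name pointing to Beardon's classical argument. Your proposal is the standard proof and is correct.

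Two minor remarks. First, you rightly isolate the non-elementarity hypothesis needed for strictness and observe that it is implied by the irreducibility assumption used in the rest of \S\ref{sectionGF}; as literally stated, the theorem admits the degenerate case $G=G_\eta$ (for which $\delta=k_\eta/2$), but this case never occurs in the paper's applications. Second, the ping-pong verification you flag as the ``main obstacle'' is indeed routine once the sublattice is deep enough: with $\eta=\infty$ one may take $X_1=\{x:|x|>R\}$ and $X_2=B(h(\infty),r)$ with $R\gg |h(\infty)|+r$, and translations by long vectors carry $X_2$ into $X_1$ while their $h$-conjugates (parabolics at $h(\infty)$) carry $X_1$ into $X_2$ by the contraction estimate you cite. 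Your alternative via the local behaviour $\mu(B(p,r))\asymp_\times r^{2\delta-k}$ of the Patterson--Sullivan measure near a rank-$k$ cusp is also standard and, in the geometrically finite setting of the paper, arguably the more natural route.
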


\subsection{Patterson--Sullivan measures}
\label{subsectionpattersonsullivan}
The \emph{Patterson--Sullivan measure} of $G$, denoted $\mu$, is the measure on $\Lambda$, unique up to a multiplicative factor, such that for all $g\in G$ and $E\subset\del\H$,
\begin{equation}
\label{pattersonsullivan}
\mu\big(g(E)\big) = \int_E |g'(\xi)|^\delta \;\dee\mu(\xi),
\end{equation}
where $\delta$ is as in \eqref{poincare}. The existence of such a measure is due to S.~J.~Patterson and D.~P.~Sullivan \cite{Patterson2, Sullivan_density_at_infinity}, and its uniqueness in the case of geometrically finite groups is due to Sullivan \cite{Sullivan_entropy}. The support of the Patterson--Sullivan measure is the entire limit set, i.e. $\Supp(\mu) = \Lambda$. Note that the Patterson--Sullivan measure is dependent on the choice of metric ($\Dist_\sph$ or $\Dist_\euc$), so there are really two Patterson--Sullivan measures $\mu_\sph$ and $\mu_\euc$, related by the formula
\[
\frac{\dee\mu_\sph}{\dee\mu_\euc}(\xi) = \left(\lim_{\eta\to\xi}\frac{\Dist_\sph(\xi,\eta)}{\Dist_\euc(\xi,\eta)}\right)^\delta.
\]
Other than the transformation equation \eqref{pattersonsullivan}, the main facts we will need about the Patterson--Sullivan measure $\mu$ are:
\begin{itemize}
\item $\mu$ is Federer; this follows from the global measure formula \cite[\67]{Sullivan_entropy}, \cite[Theorem 2]{StratmannVelani}.
\item The following corollary of the logarithm law \cite[Theorem 4]{StratmannVelani}:  for $\mu$-a.e. $\xi\in \Lambda$, we have
\begin{equation}
\label{logarithmlaw}
\lim_{t\to\infty} \frac1t \dist_\hyp(\xi_t,G(\zero)) = 0,
\end{equation}
where for each $t\geq 0$, $\xi_t$ denotes the unique point on the geodesic ray connecting $\zero$ and $\xi$ such that $\dist_\hyp(\zero,\xi_t) = t$.
\end{itemize}

\subsection{From global decay to quasi-decay}
A group $G\leq\Isom(\H)$ is said to be \emph{irreducible} if $G$ does not preserve any generalized sphere strictly contained in $\del\H$. In the remainder of this section, $G$ will denote an irreducible geometrically finite group, $\mu$ its Patterson--Sullivan measure, etc. We will use the spherical metric as the default metric; in particular $\mu$ denotes the Patterson--Sullivan measure with respect to the spherical metric. The key lemma which we will use to prove both quasi-decay and friendliness is the following:

\begin{lemma}[``Global decay'']
\label{lemmaglobaldecay}
There exists $\alpha > 0$ such that for all $\beta > 0$ and $\LL\in\Sph$, we have
\begin{equation}
\label{globaldecay}
\mu\big(\thickvar\LL\beta\big) \lesssim_\times \beta^\alpha.
\end{equation}
\end{lemma}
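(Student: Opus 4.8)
The plan is to reduce \eqref{globaldecay} to a self-similar decay estimate for $\mu$ under the $G$-action and then iterate it. Write $M(\beta):=\sup_{\LL\in\Sph}\mu(\thickvar\LL\beta)$; since $\del\H=S^d$ is compact and $\mu$ is finite, $M$ is nonincreasing with $M(\beta)\le\mu(\del\H)<\infty$, so \eqref{globaldecay} is trivial for $\beta$ bounded away from $0$ and we fix attention on $\beta\le\beta_0$ for a small constant $\beta_0$. The first step is the qualitative statement $M(\beta)\to0$ as $\beta\to0$. As $\Sph$ is compact in the Hausdorff topology, this is equivalent to nonplanarity, $\mu(\LL)=0$ for all $\LL\in\Sph$: if $M(\beta_n)\ge c$ along $\beta_n\searrow0$, a Hausdorff limit of near-maximizers $\LL_n\to\LL$ satisfies $\mu(\LL)\ge c$ by upper semicontinuity of $\LL'\mapsto\mu(\LL')$. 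Nonplanarity uses irreducibility: if some generalized sphere were $\mu$-nonnull, take one of minimal dimension $k<d$; distinct such spheres meet in dimension $<k$, hence in a $\mu$-null set, so the minimal-dimension $\mu$-nonnull generalized spheres form a countable family carried into itself by $G$ (by \eqref{pattersonsullivan}, $g$ preserves $\mu$-nonnullity and dimension); ergodicity of the $G$-action on $(\Lambda,\mu)$ \cite{Sullivan_entropy} makes the union of this family $\mu$-conull, and a short argument then produces a $G$-invariant proper generalized sphere, contradicting irreducibility.

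The core estimate concerns shadows. For $g\in G$ with $g(\zero)$ in (a bounded neighborhood of) the thick part $\CC_G\setminus\bigcup\scrH$ of the convex core, let $\mathrm{Sh}(g)$ be the shadow from $\zero$ of the hyperbolic ball $B_\hyp(g(\zero),\sigma)$. By the Geometric Mean Value Theorem (Theorem \ref{theoremGMVT}) and the Bounded Distortion Principle (Theorem \ref{theoremboundeddistortion}), $\mathrm{Sh}(g)$ is comparable to a ball of some radius $s\asymp e^{-\|g\|}$, the map $g^{-1}$ expands $\mathrm{Sh}(g)$ by a factor $\asymp 1/s$, and Sullivan's shadow lemma (a consequence of the global measure formula \cite[Theorem 2]{StratmannVelani}) gives $\mu(\mathrm{Sh}(g))\asymp s^\delta$. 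Hence $g^{-1}$ carries $\thickvar\LL\beta\cap\mathrm{Sh}(g)$ into the $O(\beta/s)$-neighborhood of the generalized sphere $g^{-1}(\LL)$, and \eqref{pattersonsullivan} yields
\[
\mu\big(\thickvar\LL\beta\cap\mathrm{Sh}(g)\big)\;\lesssim_\times\;s^\delta\,\mu\big(\thickvar{g^{-1}(\LL)}{C\beta/s}\big)\;\le\;s^\delta M(C\beta/s).
\]
Covering the thick part of $\Lambda$ by shadows of a fixed scale $s$ with bounded overlap, and observing that each such shadow meeting $\thickvar\LL\beta$ lies in $\thickvar\LL{Cs}$, bounded overlap together with $\mu(\mathrm{Sh}(g_i))\asymp s^\delta$ bounds their number by $\lesssim_\times s^{-\delta}M(Cs)$; multiplying gives $\mu\big(\thickvar\LL\beta\cap(\text{thick part})\big)\lesssim_\times M(Cs)\,M(C\beta/s)$.

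The part of $\thickvar\LL\beta\cap\Lambda$ inside the cusps must be handled separately, and this is the main obstacle: there the Bounded Distortion Principle degenerates near the parabolic fixed point, the measure scales only like $s^{2\delta-k_\eta}$ rather than $s^\delta$ (global measure formula), and the copies of $\Lambda$ filling a cusp are themselves governed by $M$ at comparable scales, so the estimate is self-referential. By the Cusp Finiteness Theorem (Theorem \ref{theoremcuspfiniteness}) and the Top Representation Theorem (Theorem \ref{theoremtoprepresentation}) it suffices to treat a neighborhood of a parabolic point $\eta\in P_0$; Theorem \ref{theoremBPP} and the Translation Planes theorem (Theorem \ref{theoremtranslationplane}) present $\Lambda$ near $\eta$ as $\bigcup_{g\in G_\eta}g(\DD_\eta)$, the $G_\eta$-translates of a compact thick-part set $\DD_\eta$ arranged along the rank-$k_\eta$ plane $\PP_\eta$ and contracting towards $\eta$. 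Splitting off the part of $\thickvar\LL\beta$ within distance $\beta^{1/2}$ of $\eta$, which has measure $\lesssim_\times\beta^{(2\delta-k_\eta)/2}$ outright, and applying the shadow estimate of the previous step (after rescaling) inside each of the remaining $G_\eta$-copies, one is left to sum a series comparable to $\sum_R R^{k_\eta-1-2\delta}(\cdots)$; this converges, with tail a positive power of $\beta$ times $M$ at a comparable scale, precisely because $\delta>k_\eta/2$ (Theorem \ref{theorembeardon}).

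Combining the thick and cuspidal contributions gives, for all $\beta\le\beta_0$ and any admissible fixed scale $s$, a recursive inequality of the form
\[
M(\beta)\;\lesssim_\times\;M(Cs)\,M(C\beta/s)+\beta^\gamma
\]
with $\gamma>0$. Taking $s=C/c_0$ and choosing the fixed ratio $c_0>1$ large enough that $M(C^2/c_0)$ is below a suitable threshold (possible by the first step) converts this into $M(\beta/c_0)\le\frac12 M(\beta)$ whenever $M(\beta)\ge\beta^\gamma$, while otherwise $M(\beta)\lesssim_\times\beta^\gamma$ already; iterating down the geometric sequence $\beta_0c_0^{-n}$ and using monotonicity of $M$ upgrades this to $M(\beta)\lesssim_\times\beta^\alpha$ for all $\beta$, with $\alpha>0$ depending only on $G$. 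This is \eqref{globaldecay}.
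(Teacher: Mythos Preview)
Your route via a recursion on $M(\beta)=\sup_{\LL\in\Sph}\mu(\thickvar\LL\beta)$ is genuinely different from the paper's, and the thick-part shadow estimate giving $\lesssim M(Cs)\,M(C\beta/s)$ is fine. The gap is the cusp step. After splitting off $B(\eta,\beta^{1/2})$ you bound each copy by $\mu(\thickvar\LL\beta\cap h(\DD_\eta))\lesssim r_h^{2\delta}\,M(C\beta/r_h^2)$ and then sum over \emph{all} $h\in G_\eta$ in the annulus, regardless of whether $h(\DD_\eta)$ actually meets $\thickvar\LL\beta$. Your series $\sum_R R^{k_\eta-1-2\delta}(\cdots)$ does converge, but its leading terms (small $R$, i.e.\ copies near the boundary of the cusp region) contribute a \emph{fixed} constant times $M(C'\beta)$, not ``a positive power of $\beta$ times $M$''. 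Moreover, the reduction to $\eta\in P_0$ via Theorems~\ref{theoremcuspfiniteness}--\ref{theoremtoprepresentation} is a local device: at any fixed scale $s$ the thin part consists of cusp regions about \emph{every} parabolic point whose horoball is reachable by time $-\log s$, and summing your bound over all of these produces a coefficient in front of $M(C'\beta)$ comparable to $\mu(\text{thin part at scale }s)$, which is bounded below independently of $s$. Your recursion therefore reads $M(\beta)\le A\cdot M(C'\beta)+O(\beta^\gamma)$ with no mechanism offered to force the implied coefficient $A$ below $1$, so the iteration does not close.

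The paper sidesteps this by localizing before touching the cusps. It proves the equivalent contraction $\mu(\thickvar\LL{\epsilon\beta})\le\lambda\,\mu(\thickvar\LL\beta)$ with $\epsilon,\lambda<1$ fixed (Lemma~\ref{lemmapreglobaldecay}) via a $4r$-covering: at each $\xi\in\Lambda\cap\thickvar\LL{\epsilon\beta}$ one exhibits a ball $B(\xi,\rho)$ in which a definite proportion of the mass lies in $\thickvar\LL\beta\setminus\thickvar\LL{\epsilon\beta}$ (Lemma~\ref{lemmaexistsrho1}), and the Federer property converts this into the global contraction. Only with a single ball in hand does Reduction~\ref{reductionP0} legitimately pull back to a standard cusp $\eta\in P_0$. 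The remaining single-cusp estimate is then proved not by summing $M$-values over all copies but by comparing $\mu$ near $\eta$ with the explicit measure $\lambda_\eta$ on the translation plane $\PP_\eta$ (Lemma~\ref{lemmaexistsrho2}); this is exactly what records \emph{which} copies $h(\DD_\eta)$ actually sit near $\LL$---the information your global sum discards. If you want to salvage the recursion, that geometric restriction on the copies is the missing ingredient.
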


Before proving this lemma, we use it to prove that $\mu$ is quasi-decaying:

\begin{proof}[Proof of Theorem \ref{theoremGFQD} assuming Lemma \ref{lemmaglobaldecay}]
Fix $\xi\in \Lambda\butnot\{\infty\}$ satisfying \eqref{logarithmlaw}, and we will show that $\mu$ is quasi-decaying at $\xi$ relative to $\R^d$. Since $\Dist_\euc \asymp_\times \Dist_\sph$ in a small neighborhood of $\xi$, it is enough to show that, quantified appropriately, the equation \eqref{QDwithE} is satisfied when the metric is taken to be the spherical metric. Fix $\gamma > 0$, $0 < \rho\leq 1$, $0 < \beta \leq \rho^\gamma$, and $\LL\in\Hyp$. Replace $\LL$ by $\LL \cup \{\infty\}$, so that $\LL\in\Sph$.

Let $t = -\log(\rho)$, and fix $g\in G$ such that $d = \dist(\xi_t,g(\zero)) = \dist(\xi_t,G(\zero))$. Then by Theorem \ref{theoremboundeddistortion},
\begin{equation}
\label{gprimebounds}
e^{t - 2d} \lesssim_\times |(g^{-1})'| \leq e^{t + d} \text{ on $B(\xi,\rho)$},
\end{equation}
so
\begin{align*}
\mu\big(\thickvar\LL{\beta\rho}\cap B(\xi,\rho)\big) &\lesssim_\times e^{-\delta(t - 2d)}\mu\big(g^{-1}(\thickvar\LL{\beta\rho})\big) \by{\eqref{pattersonsullivan}}\\
&\leq_\pt e^{-\delta(t - 2d)} \mu\big(\thickvar{g^{-1}(\LL)}{e^{t + d}\beta \rho}\big) \by{Theorem \ref{theoremGMVT}}\\
&\lesssim_\times e^{-\delta(t - 2d)} (e^{(t + d)} \beta \rho)^\alpha \by{Lemma \ref{lemmaglobaldecay}}\\
&=_\pt \beta^\alpha \rho^\delta e^{(\alpha + 2\delta)d}\\
&\lesssim_\times \beta^\alpha \rho^{\delta - \alpha\gamma/4} \by{\eqref{logarithmlaw}}\\
&\leq_\pt \beta^{\alpha/2} \rho^{\delta + \alpha\gamma/4}. \since{$\beta \leq \rho^\gamma$}
\end{align*}
A similar argument can be used to show that
\begin{equation}
\label{GFexactdim}
\lim_{\rho\searrow 0} \frac{\log\mu\big(B(\xi,\rho)\big)}{\log(\rho)} = \delta.
\end{equation}
(Alternatively, \eqref{GFexactdim} follows from \eqref{logarithmlaw} together with the global measure formula \cite[Theorem 2]{StratmannVelani}, although this is overkill.) Thus
\[
\rho^{\delta + \alpha\gamma/4} \lesssim_\times \mu\big(B(\xi,\rho)\big),
\]
which completes the proof.
\end{proof}

\begin{remark}
The above proof is valid for any group $G\leq\Isom(\H)$ (not necessarily geometrically finite) whose Patterson--Sullivan measure satisfies \eqref{logarithmlaw} and Lemma \ref{lemmaglobaldecay}. It would be interesting to see whether there are geometrically infinite examples of groups with these properties.
\end{remark}

\subsection{Proof of global decay}
We proceed to prove Lemma \ref{lemmaglobaldecay} via a series of reductions, finally reducing the question to one purely about a certain smooth measure.

\begin{lemma}
\label{lemmapreglobaldecay}
There exist $0 < \epsilon,\lambda < 1$ such that for all  $0 < \beta \leq 1$ and $\LL\in\Sph$, we have
\begin{equation}
\label{preglobaldecay}
\mu(\thickvar\LL{\epsilon\beta}) \leq \lambda \mu(\thickvar\LL{\beta}).
\end{equation}
\end{lemma}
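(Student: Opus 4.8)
The plan is to argue by contradiction using compactness. Suppose the conclusion fails: then for every $n\in\N$ there exist $\beta_n \in (0,1]$ and $\LL_n\in\Sph$ such that
\[
\mu\big(\thickvar{\LL_n}{\beta_n/n}\big) > (1 - 1/n)\,\mu\big(\thickvar{\LL_n}{\beta_n}\big).
\]
First I would reduce to a bounded configuration. Since $\mu$ is supported on the compact limit set $\Lambda\subset\del\H$, only generalized spheres meeting a fixed bounded neighborhood of $\Lambda$ are relevant, and after intersecting with such a neighborhood the space of such $\LL$ is precompact in $\Sph$ (for a suitable topology, e.g. Hausdorff convergence on bounded sets, keeping in mind $\Sph$ includes affine subspaces); also $\beta_n$ ranges in $[0,1]$. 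So, passing to a subsequence, I may assume $\beta_n\to\beta_\infty\in[0,1]$ and $\LL_n\to\LL_\infty\in\Sph$.

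The key step is to rule out $\beta_\infty = 0$ separately from $\beta_\infty > 0$. If $\beta_\infty > 0$, then $\thickvar{\LL_n}{\beta_n/n}$ shrinks toward $\LL_\infty$ (a $\mu$-nullset, by irreducibility: $\mu(\LL) = 0$ for every $\LL\in\Sph$, since otherwise $G$ would essentially preserve a generalized sphere — this is the place the nonplanarity/irreducibility hypothesis enters, via an ergodicity or minimality argument for the $G$-action on $\Lambda$), so $\mu(\thickvar{\LL_n}{\beta_n/n})\to 0$, while $\mu(\thickvar{\LL_n}{\beta_n})\to\mu(\thickvar{\LL_\infty}{\beta_\infty}) > 0$ whenever $\LL_\infty$ meets $\Lambda$; if $\LL_\infty$ misses $\Lambda$ then both sides are eventually $0$ and the inequality is vacuous or contradictory. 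Either way we contradict the assumed inequality for large $n$. The harder case $\beta_\infty = 0$ is where the ``reduction to Lebesgue measure'' flagged in the text must come in: here one rescales, zooming in on a point of $\LL_n\cap\Lambda$ at scale $\beta_n$. Using the Patterson--Sullivan transformation rule \eqref{pattersonsullivan} together with the bounded distortion principle (Theorem \ref{theoremboundeddistortion}) and the geometric mean value theorem (Theorem \ref{theoremGMVT}), pushing $\mu$ forward by an element $g_n\in G$ that moves the relevant configuration to unit scale produces measures that, up to bounded multiplicative error, look like $\mu$ near a limit point, but now with the thickening at a fixed scale — reducing this subcase to the $\beta_\infty>0$ case or directly to a statement about $\mu$ versus Lebesgue near parabolic points (cf. Theorem \ref{theoremtranslationplane} and the global measure formula).

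The main obstacle I anticipate is making the compactness argument in $\Sph$ genuinely rigorous when $\LL_n$ can be a sequence of affine hyperplanes drifting to infinity, or spheres degenerating to hyperplanes — one wants a topology on $\Sph$ in which the family $\{\LL : \thickvar{\LL}{1}\cap\Lambda\neq\emptyset\}$ is compact and $\LL\mapsto\mu(\thickvar{\LL}{\beta})$ is appropriately semicontinuous. Working in the ball model $\del\B = S^d$ (via stereographic projection, which conjugates $\Sph$ to the genuinely compact family of subsphere-slices $\del\B\cap\AA$) handles this cleanly. The second delicate point is the input $\mu(\LL) = 0$ for all $\LL\in\Sph$: this should be extracted from irreducibility of $G$ — if $\mu(\LL_0) > 0$ for some $\LL_0\in\Sph$, then by the transformation rule $\mu(g\LL_0) > 0$ for all $g\in G$, and an argument using that $\mu$ is the unique conformal density (or that $\Lambda$ is the full support) forces $G$ to stabilize a generalized sphere, contradicting irreducibility. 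With these two ingredients in hand the contradiction closes and the lemma follows; the global decay Lemma \ref{lemmaglobaldecay} is then a routine iteration of \eqref{preglobaldecay} over dyadic-type scales, $\mu(\thickvar{\LL}{\beta}) \lesssim_\times \beta^{\alpha}$ with $\alpha = \log(1/\lambda)/\log(1/\epsilon)$.
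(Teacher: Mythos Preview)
Your compactness approach differs from the paper's: there, Lemma~\ref{lemmapreglobaldecay} is deduced from a \emph{local} statement, Lemma~\ref{lemmaexistsrho1} (for every $\xi\in\Lambda\cap\thickvar\LL{\epsilon\beta}$ some ball $B(\xi,\rho)$ carries a definite fraction of its $\mu$-mass in $\thickvar\LL\beta\butnot\thickvar\LL{\epsilon\beta}$), via the $4r$-covering lemma and the Federer property of $\mu$. Your Case~1 ($\beta_\infty>0$) is sound once one knows $\mu(\LL)=0$ for all $\LL\in\Sph$, and that does follow from irreducibility by a zooming argument at a radial density point; this part is a genuine alternative to the paper's route.

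The gap is Case~2. Rescaling ``at a point $\xi_n$'' by some $g_n\in G$ with $|(g_n^{-1})'(\xi_n)|\asymp 1/\beta_n$ only controls the picture on a ball $B(\xi_n,C\beta_n)$; away from $\xi_n$ the derivative of $g_n^{-1}$ is not comparable to $1/\beta_n$, so $g_n^{-1}(\thickvar{\LL_n}{\beta_n})$ is \emph{not} a uniform-width neighborhood of $g_n^{-1}(\LL_n)$, and the transformation rule~\eqref{pattersonsullivan} does not convert the \emph{global} inequality $\mu(\thickvar{\LL_n}{\beta_n/n})>(1-1/n)\mu(\thickvar{\LL_n}{\beta_n})$ into a unit-scale one for $\mu$. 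To exploit rescaling you must first localize (find for each $\xi$ a ball on which the comparison holds) and then patch the local estimates by a covering argument --- which is exactly the paper's two-step structure. The uniformity of the local estimate over all $\xi$ is precisely where the parabolic geometry enters (Reduction~\ref{reductionP0} and Lemma~\ref{lemmaexistsrho2}); your sketch does not explain how a single rescaling could secure that uniformity near a cusp, where $\Lambda$ is tangent to hyperplanes and no group element brings the configuration to bounded distortion at scale $\beta_n$.
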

\begin{proof}[Proof of Lemma \ref{lemmaglobaldecay} from Lemma \ref{lemmapreglobaldecay}]
Fix $n\in\N$. By applying \eqref{preglobaldecay} with $\beta = 1,\epsilon,\ldots,\epsilon^{n - 1}$, we have
\[
\mu\big(\thickvar\LL{\epsilon^n}\big) \leq \lambda^n \mu(\del\H) = \lambda^n.
\]
Thus \eqref{globaldecay} holds with $\alpha = \log_\epsilon(\lambda)$.
\end{proof}

\draftnewpage
\begin{lemma}
\label{lemmaexistsrho1}
For all $\rho_0 > 0$, there exists $\epsilon > 0$ with the following property: for all $0 < \beta \leq 1$, $\LL\in\Sph$, and $\xi\in \Lambda\cap\thickvar\LL{\epsilon\beta}$, there exists $0 < \rho \leq \rho_0$ such that
\begin{equation}
\label{existsrho1}
\mu\big(B(\xi,\rho)\cap\thickvar\LL{\beta}\butnot\thickvar\LL{\epsilon\beta}\big) \asymp_\times \mu(B(\xi,\rho)).
\end{equation}
\end{lemma}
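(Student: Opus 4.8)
The plan is to exploit the self-similar structure of $\Lambda$ near a generalized sphere: any $\xi\in\Lambda$ lying very close to $\LL\in\Sph$ can be "zoomed out" by a group element $g$ whose action moves $\xi$ to a definite distance from $g(\LL)$ while keeping $g(\LL)$ a bona fide generalized sphere, and then the nonplanarity lemma (Lemma \ref{lemmaglobaldecay}) together with the fact that $\mu$ is Federer forces a definite proportion of the $\mu$-mass of a suitable ball to lie outside the $\epsilon\beta$-thickening of $\LL$ but inside the $\beta$-thickening. Concretely, given $\rho_0$, I would first fix $\xi\in\Lambda\cap\thickvar\LL{\epsilon\beta}$ and choose a hyperbolic "shadow point" $x$ on the geodesic ray $\geo\zero\xi$ at the height $t$ corresponding to scale $\asymp\beta$; by geometric finiteness (the decomposition \eqref{GFdef}) either $x$ is within bounded hyperbolic distance of an orbit point $g(\zero)$, or $x$ lies deep in a horoball $H_\eta$. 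The point of using $\rho_0$ small is that we are free to first shrink $\beta$ (equivalently pass to small scales) and hence the constant $\epsilon$ will be allowed to depend on $\rho_0$.

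Next I would use the transformation rule \eqref{pattersonsullivan} and the Geometric Mean Value Theorem (Theorem \ref{theoremGMVT}) to transport the picture by $g^{-1}$: the ball $B(\xi,\rho)$ is comparable, up to the conformal factor $|g'|\asymp e^{\|g\|}$ and bounded distortion (Theorem \ref{theoremboundeddistortion}), to a ball of definite size around $g^{-1}(\xi)$, and $\thickvar\LL\beta$ is carried to $\thickvar{g^{-1}(\LL)}{\beta'}$ with $\beta'\asymp_\times 1$ (this is where we use that $\xi$ is within $\epsilon\beta$ of $\LL$: the image of the $\epsilon\beta$-shell is a shell of radius $\asymp\epsilon$, i.e. small, while the image of the $\beta$-shell has radius bounded below). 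So after normalization we are reduced to the following assertion: there is a definite-size ball $B$ centered at a point of $\Lambda$ within distance $\asymp\epsilon$ of some $\LL'\in\Sph$, and we want $\mu(B\cap\thickvar{\LL'}1\butnot\thickvar{\LL'}\epsilon)\asymp_\times\mu(B)$. Since $\mu(B)\gtrsim_\times 1$ on definite-size balls centered in $\Lambda$ (by the global measure formula / exact dimensionality $\eqref{GFexactdim}$ plus Federer), it suffices to show $\mu(B\cap\thickvar{\LL'}\epsilon)\leq\tfrac12\mu(B)$, which for $\epsilon$ small follows directly from Lemma \ref{lemmaglobaldecay} (global decay gives $\mu(\thickvar{\LL'}\epsilon)\lesssim_\times\epsilon^\alpha\to 0$). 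Choosing the final $\epsilon$ small enough to beat all the comparability constants, and noting that the construction delivers $\rho\leq\rho_0$ by our initial choice of scale, completes the argument.

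The main obstacle I anticipate is the case analysis in the geometric finiteness decomposition: when the shadow point $x$ lies deep in a horoball $H_\eta$, the group element $g$ realizing $\dist(x,G(\zero))$ is not bounded, and one must use the Top Representation Theorem (Theorem \ref{theoremtoprepresentation}) and the structure of the cusp (Theorems \ref{theoremtranslationplane} and \ref{theoremBPP}) to control how $g^{-1}$ distorts $\LL$ — in particular to ensure $g^{-1}(\LL)$ does not degenerate and that the relevant ball still meets $\Lambda$ in a set of definite $\mu$-mass. A secondary subtlety is bookkeeping the order of quantifiers: $\epsilon$ must be chosen after $\rho_0$ but uniformly in $\beta$, $\LL$, and $\xi$, so all the comparability constants entering the chain (from \eqref{pattersonsullivan}, Theorem \ref{theoremboundeddistortion}, the Federer constant, and the implied constant in Lemma \ref{lemmaglobaldecay}) must be shown to depend only on $G$, and then $\epsilon$ chosen small relative to their product. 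I would isolate the "reduction to a definite scale" as the first step so that the horoball analysis is carried out once, at a fixed scale, rather than uniformly.
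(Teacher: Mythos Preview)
Your overall geometry is right --- zoom in by a group element so that the $\beta$-thickening becomes of definite size, then use nonplanarity to see that a definite fraction of mass escapes the thin shell --- and this is exactly how the paper proceeds (Reduction \ref{reductionP0}). But there is a genuine logical gap in your write-up: you invoke Lemma \ref{lemmaglobaldecay} (global decay) to conclude $\mu(\thickvar{\LL'}\epsilon)\lesssim_\times\epsilon^\alpha$, and this is circular. In the paper's chain of reductions, Lemma \ref{lemmaglobaldecay} is deduced from Lemma \ref{lemmapreglobaldecay}, which in turn is deduced from Lemma \ref{lemmaexistsrho1}; so global decay is not available here. What you actually need at the ``definite scale'' step is only the weaker statement $\sup_{\LL'\in\Sph}\mu(\thickvar{\LL'}\epsilon)\to 0$ as $\epsilon\to 0$, and this follows from irreducibility of $G$ (so $\mu(\LL')=0$ for every $\LL'\in\Sph$) together with a compactness argument on $\Sph$. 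The paper uses exactly this (see the $\what\beta>1$ case at the end of Reduction \ref{reductionP0}).

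The second, more substantial, gap is the horoball case, which you flag but do not resolve. When the shadow point $x=\xi_t$ lies deep in a horoball, the nearest orbit point $g(\zero)$ may satisfy $\|g\|\ll t$, so zooming by $g^{-1}$ does \emph{not} bring the $\epsilon\beta$-thickening to definite scale; after the zoom you still have $\what\epsilon\what\beta$ small, and you are left with essentially the original problem near a fixed cusp $\eta\in P_0$. The paper handles this by comparing $\mu$ near $\eta$ with the explicit measure $\lambda_\eta$ on the translation plane $\PP_\eta$ (via the decomposition of $\Lambda\setminus\{\eta\}$ into the pieces $h(\DD)$, $h\in G_\eta$), and then proving a purely Euclidean/Lebesgue statement (Lemma \ref{lemmaexistsrho2}) about $\lambda_\eta$. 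This reduction and the Lebesgue lemma are the real content of the proof, and your sketch does not yet contain them.
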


\begin{proof}[Proof of Lemma \ref{lemmapreglobaldecay} from Lemma \ref{lemmaexistsrho1}]
Let $\rho_0 = 1$ and let $\epsilon$ be as in Lemma \ref{lemmaexistsrho1}. Fix $0 < \beta \leq 1$ and $\LL\in\Sph$. For each $\xi\in E := \Supp(\mu)\cap \thickvar\LL{\epsilon\beta}$, let $0 < \rho_\xi \leq 1$ be chosen to satisfy \eqref{existsrho1}. By the $4r$-covering lemma \cite[Theorem 8.1]{MSU}, the collection $\{B(\xi,\rho_\xi) : \xi\in E\}$ has a disjoint subcollection $\{B(\xi_i,\rho_i)\}_{i = 1}^N$ such that the collection $\{B(\xi_i,4\rho_i)\}_{i = 1}^N$ covers $E$. Then
\begin{align*}
\mu\big(\thickvar\LL{\beta}\butnot\thickvar\LL{\epsilon\beta}\big)
&\geq_\pt \sum_{i = 1}^N \mu\big(B(\xi_i,\rho_i)\cap\thickvar\LL{\beta}\butnot\thickvar\LL{\epsilon\beta}\big) \noreason\\
&\gtrsim_\times \sum_{i = 1}^N \mu(B(\xi_i,\rho_i)) \note{Lemma \ref{lemmaexistsrho1}}\\
&\asymp_\times \sum_{i = 1}^N \mu(B(\xi_i,4\rho_i)) \since{$\mu$ is Federer}\\
&\geq_\pt \mu(E) = \mu(\thickvar\LL{\epsilon\beta}).\noreason
\end{align*}
After denoting the implied constant by $C$ and letting $\lambda = 1/(1 + 1/C) < 1$, rearranging gives \eqref{preglobaldecay}.
\end{proof}

\begin{reduction}
\label{reductionP0}
In the proof of Lemma \ref{lemmaexistsrho1}, we can without loss of generality assume that there exists $\eta\in P_0$ (cf. Theorem \ref{theoremcuspfiniteness}) such that
\begin{equation}
\label{P0}
\epsilon\beta \geq \Dist^2(\xi,\eta)/C,
\end{equation}
where $C\geq 1$ is a constant possibly depending on $\rho_0$.
\end{reduction}
The basic idea is to ``pull back'' the entire picture via an isometry $g\in G$: to choose this isometry, we let $x = \xi_t$ for an appropriate value of $t$, use \eqref{GFdef} to find $H\in\scrH$ such that $x\in H$, and then let $g$ come from a top representation of $H$ (cf. Theorem \ref{theoremtoprepresentation}). After pulling back the picture, the new $\beta$ is ``on the large scale'', which translates quantitatively into the equation \eqref{P0}.
\begin{proof}
We may without loss of generality assume $\zero\in\CC_G$. Suppose that for all $C\geq 1$, Lemma \ref{lemmaexistsrho1} holds in the special case \eqref{P0}. We let $\rho_0,\epsilon,\beta,\LL,\xi,\rho$ denote the variables occurring in the version of Lemma \ref{lemmaexistsrho1} that we are trying to prove, while $\what\rho_0,\what\epsilon,\what\beta,\what\LL,\xihat,\what\rho$ denote the variables occurring in the version of Lemma \ref{lemmaexistsrho1} that we know. So fix $\rho_0 > 0$, and let $\what\rho_0 = (1\wedge\rho_0)/C_1$, where $C_1 \geq 1$ is large to be determined. Let $0 < \what\epsilon \leq 1/2$ be given, and let $\epsilon = \what\epsilon/C_1$, where $C_2 \geq C_1$ is large to be determined, possibly depending on $\what\rho_0$. Fix $0 < \beta\leq 1$, $\LL\in\Sph$, and $\xi\in \Lambda\cap \thickvar\LL{\epsilon\beta}$.

Let $t = -\log(C_2\epsilon\beta)$ and $x = \xi_t$. Then $x\in\CC_G$, since $\zero\in\CC_G$ and so by \eqref{GFdef} we have either $x\in g(B_\hyp(\0,\sigma))$ for some $g\in G$, or else $x\in H$ for some $H\in\scrH$. In the former case the reduction to the case $\what\epsilon\what\beta\geq 1/C$ follows from the comparison of the sets involved in Lemma \ref{lemmaexistsrho1} with their images under $g^{-1}$; we omit the details as they are similar to what follows. (Alternatively, when $G$ has at least one cusp, the former case can be reduced to the latter one by expanding the horoballs of $\scrH$ so that they cover $\CC_G$ rather than being disjoint.)

So suppose that $x\in H$ for some $H\in\scrH$. Let $\eta\in P_0$ and $g\in G$ be a top representation as in Theorem \ref{theoremtoprepresentation}. Let
\[
\what\beta = e^{\|g\|}\beta/C_1, \;\;
\what\LL = g^{-1}(\LL), \text{ and }
\xihat = g^{-1}(\xi).
\]
Since $\xi\in\Lambda \cap \thickvar\LL{\epsilon\beta}$, by Theorems \ref{theoremGMVT} and \ref{theoremboundeddistortion}, $\xihat\in\Lambda\cap \thickvar{\what\LL}{\what\epsilon\what\beta}$. On the other hand, since $x\in H$ and $g$ is a top representation, we have $\busemann_{g(\eta)}(\zero,x) \gtrsim_\plus \|g\|$ and thus
\[
\Dist^2(\xi,g(\eta)) \lesssim_\times e^{-t} e^{-\|g\|}.
\]
Thus we get
\[
\Dist^2(\xihat,\eta) \lesssim_\times e^{-t} e^{\|g\|} = C_2\epsilon\beta e^{\|g\|} = C_2 \what\epsilon\what\beta,
\]
so by letting $C = C_2\cdot(\text{implied constant})$, we get $\what\epsilon\what\beta \geq \Dist^2(\xihat,\eta)/C$.

Suppose first that $\what\beta\leq 1$. Then the known special case of Lemma \ref{lemmaexistsrho1} applies; let $0 < \what\rho \leq \what\rho_0$ be given as in that special case. Now since $\dist(g(\zero),\geo\zero\xi) \leq \dist(g(\zero),\geo\zero x) \asymp_\plus 0$, we have $\Dist(g^{-1}(\zero),\xihat) \gtrsim_\times 1$. Let $C_1$ be chosen so that $1/C_1 \leq (1/2)\Dist(g^{-1}(\zero,\xihat))$; then we have $\Dist(g^{-1}(\zero),\eta) \geq 1/C_1$ for all $\eta\in B(\xihat,1/C_1)$. So by Theorem \ref{theoremboundeddistortion}, after possibly increasing $C_1$ we have
\begin{equation}
\label{xiegbounds}
e^{-\|g\|} \leq |g'| \leq C_1 e^{-\|g\|} \text{ on $B(\xihat,1/C_1)$}.
\end{equation}
Since $\what\rho \leq \what\rho_0 \leq 1/C_1$, the same holds on $B(\xihat,\what\rho)$. Thus, applying $g$ to the known case of \eqref{existsrho1} and using Theorem \ref{theoremGMVT}, \eqref{pattersonsullivan}, and \eqref{xiegbounds} gives
\[
\mu\big(B(\xi,C_1 e^{-\|g\|}\what\rho)\cap\thickvar{\LL}{C_1 e^{-\|g\|}\what\beta}\butnot\thickvar{\LL}{e^{-\|g\|}\what\epsilon\what\beta}\big) \gtrsim_\times \mu(B(\xi,e^{-\|g\|}\what\rho)).
\]
Letting $\rho = C_1 e^{-\|g\|}\what\rho$ and using the Federer condition gives the desired case of \eqref{existsrho1}.

If $\what\beta > 1$, then the above argument is still valid as long as there exists $0 < \what\rho \leq \what\rho_0$ such that \eqref{existsrho1} holds. We claim that in fact, in this case \eqref{existsrho1} holds with $\what\rho = \what\rho_0$. Indeed, since $\what\beta > 1$, we have $\what\rho \leq (1 - \what\epsilon)\what\beta$ assuming $C_1\geq 2$, and thus since $\xihat\in \thickvar{\what\LL}{\what\epsilon\what\beta}$, we have
\[
B(\xihat,\what\rho) \subset \thickvar{\what\LL}{\what\beta}
\]
and thus to complete the proof, it suffices to show that
\begin{equation}
\label{LHSRHS}
\mu\big(\thickvar{\what\LL}{\what\epsilon\what\beta}\big) \leq (1/2) \mu\big(B(\xihat,\what\rho_0)\big).
\end{equation}
Now by a compactness argument, the right-hand side of \eqref{LHSRHS} is bounded below by a constant depending only on $\what\rho_0$. On the other hand, since $x\in H$ and $g$ is a top representation, we have $t\gtrsim_\plus \|g\|$ i.e. $C_2\epsilon\beta \lesssim_\times e^{-\|g\|}$ i.e. $\what\epsilon\what\beta \lesssim_\times 1/C_2$ and thus the left-hand side of \eqref{LHSRHS} is less than
\[
f(C_2) = \sup_{\what\LL\in\Sph} \mu\big(\thickvar{\what\LL}{K/C_2}\big).
\]
for some constant $K$. Now by a compactness argument and since $G$ acts irreducibly, we have $f(C_2)\to 0$ as $C_2\to\infty$. So by choosing $C_2$ sufficiently large, we can guarantee that the left-hand side of \eqref{LHSRHS} is smaller than the right-hand side.
\end{proof}

\draftnewpage

The next reduction requires some motivation. Let $\eta\in P_0$ be as in \eqref{P0}. Since $P_0$ is finite, we can treat $\eta$ as fixed. Let $\DD = \DD_\eta$ and $\PP = \PP_\eta$ be given by Theorems \ref{theoremBPP} and \ref{theoremtranslationplane}, respectively, let $k = k_\eta = \dim(\PP)$, and let $H = G_\eta$ be the stabilizer of $\eta$ in $G$. Without loss of generality we can assume that $\PP\butnot\{\eta\} \subset H(\DD)$.

We proceed to approximate $\mu$ by a smooth measure $\lambda_\eta$ on $\PP$. To motivate the choice of this measure, note that for each $h\in H$, the measure of the set $h(\DD)$ can be computed using the transformation equation \eqref{pattersonsullivan}. Thus it will be useful if the new measure $\lambda$ also satisfies \eqref{pattersonsullivan}, at least for $h\in H$. It is easy to come up with a formula for a measure on $\PP$ which satisfies \eqref{pattersonsullivan} for an even larger class of M\"obius transformations: namely, those $h\in\Isom(\H)$ such that $h(\eta) = \eta$ and $h(\PP) = \PP$. Precisely:
\begin{equation}
\label{lambdadeltadef}
\dee\lambda_\eta(\xi) = \Dist(\eta,\xi)^{2\delta - 2k} \dee\lambda_\PP(\xi).
\end{equation}
Here $\lambda_\PP$ denotes the Hausdorff $k$-dimensional measure on $\PP$ with respect to the spherical metric $\Dist = \Dist_\sph$. Note that by Theorem \ref{theorembeardon}, the singularity of $\lambda_\eta$ at $\eta$ is integrable, i.e. $\lambda_\eta$ is a finite measure.

It turns out that when approximating $\mu$ by $\lambda_\eta$, it is appropriate to replace every set of the form $\thickvar\LL\beta$ by a set of the form
\[
\w\NN(\LL,\beta) := \thickvar\LL\beta\cap B(\eta,\sqrt\beta).
\]
We can now state the next lemma in the reduction:

\begin{figure}[h!]
\begin{center}
\begin{tikzpicture}[line cap=round,line join=round,>=triangle 45,scale=2.1]
\clip(-0.2002351321003269,-1.256335988539203) rectangle (6.330320767825652,2.900189324578096);
\draw(1.944313640568794,-4.260328084250979) circle (4.896529063833537cm);
\draw(7.069865888996104,0.978921834301642) circle (2.4329337787226546cm);
\draw(2.0002058613979004,4.220495454091568) circle (3.5844786492188563cm);
\draw(4.124979893500483,0.7030984897295431) circle (0.5248716852402419cm);
\draw(3.3721858839985472,0.6623294912630658) circle (0.22904312170249935cm);
\draw(3.015078543426337,0.6491004616872263) circle (0.12831394317950878cm);
\draw(2.8048697954632837,0.6433315384932827) circle (0.08207161680921136cm);
\draw(2.6658151126991405,0.640352281620797) circle (0.057015400541110095cm);
\draw(2.5669310830162484,0.6386838974707642) circle (0.041890466124819066cm);
\draw[thick] (-2.13726402256241,1.60638591031447)-- (11.306569205315798,-0.9414977560616642);
\draw[dashed] [domain=-0.2002351321003269:6.330320767825652] plot(\x,{(--17.238976472802307-2.5478836663761344*\x)/13.443833227878208});
\draw[dashed] [domain=-0.2002351321003269:6.330320767825652] plot(\x,{(--15.131284443865017-2.5478836663761344*\x)/13.443833227878208});
\begin{scriptsize}
\draw [fill=black] (1.9765831411979147,0.6360946458955603) circle (0.5pt);
\draw[color=black] (1.98,0.50) node {$\eta$};
\end{scriptsize}
\end{tikzpicture}
\caption{A schematic drawing of a hyperplane-neighborhood near a rank one cusp $\eta$. The balls represent sets $h(\DD)$, $h\in G_\eta$. The measure of the hyperplane-neighborhood is estimated by considering its intersection with each of the sets $h(\DD)$.}
\label{figureHello}
\end{center}
\end{figure}

\begin{lemma}
\label{lemmaexistsrho2}
For all $\rho_0 > 0$, there exists $\epsilon > 0$ such that for all $0 < \beta \leq \rho_0^2/4$, $\LL\in\Sph$, and $\xi\in \PP\cap \w\NN(\LL,\epsilon\beta)$, there exists $\beta/2 \leq \rho \leq \rho_0$ such that
\begin{equation}
\label{existsrho2}
\lambda_\eta\big(B(\xi,\rho)\cap \w\NN(\LL,\beta)\butnot \w\NN(\LL,\epsilon\beta)\big) \asymp_\times \lambda_\eta\big(B(\xi,\rho)\big).
\end{equation}
\end{lemma}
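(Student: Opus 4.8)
The plan is to reduce the statement entirely to a computation about the explicit measure $\lambda_\eta$ on the generalized sphere $\PP = \PP_\eta$, which by \eqref{lambdadeltadef} is a weighted copy of $k$-dimensional Hausdorff (essentially Lebesgue) measure on $\PP$. The key geometric feature is that $\w\NN(\LL,\beta)$ is intersected with a ball $B(\eta,\sqrt\beta)$, so on this region the weight $\Dist(\eta,\xi)^{2\delta-2k}$ appearing in $\dee\lambda_\eta$ varies by at most a bounded multiplicative factor as $\xi$ ranges over a single ball $B(\xi_0,\rho)$ with $\rho\asymp\beta$ (since $\rho\ll\sqrt\beta$ when $\beta$ is small). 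Hence on the relevant scale $\lambda_\eta$ is comparable to a constant multiple of $\lambda_\PP$, and the problem becomes: given an affine-type slab $\thickvar\LL\beta\cap\PP$ inside $\PP\cong\R^k$ and a point $\xi$ within $\epsilon\beta$ of $\LL$, find a radius $\rho\in[\beta/2,\rho_0]$ so that a fixed proportion of the ball $B(\xi,\rho)$ lies in the annular slab $\thickvar\LL\beta\butnot\thickvar\LL{\epsilon\beta}$.

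First I would record the bounded-distortion estimate on the weight: for $\xi\in B(\eta,\sqrt\beta)$ and $\xi'\in B(\xi,\beta)$ one has $\Dist(\eta,\xi')\asymp_\times\Dist(\eta,\xi)+\sqrt\beta\asymp_\times$ a quantity constant up to a multiplicative factor on the whole ball, so $\lambda_\eta\given B(\xi,\rho)\asymp_\times c(\xi,\beta)\,\lambda_\PP\given B(\xi,\rho)$ for $\rho\le\beta\le\rho_0^2/4$; I must also handle the boundary case where $B(\xi,\rho)$ is not entirely inside $B(\eta,\sqrt\beta)$, i.e. where $\xi$ is near the ``edge'' of the truncation, but since $\rho\le\rho_0$ and one can shrink $\rho_0$, the overlap $B(\xi,\rho)\cap B(\eta,\sqrt\beta)$ still carries a fixed proportion of $\lambda_\PP\given B(\xi,\rho)$ (both sets being balls of comparable size, or $B(\xi,\rho)\subset B(\eta,\sqrt\beta)$ outright when $\rho$ is chosen $\le\sqrt\beta/2$, which is automatic once $\beta$ is small relative to $\rho_0$). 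Second, I would dispose of a trivial case: if $\LL\cap\PP$ is empty or $\PP$ is ``transverse'' to $\LL$ in a bad way — more precisely, one must separate the case where $\thickvar\LL\beta\cap\PP$ is essentially all of the relevant neighborhood of $\xi$ in $\PP$ (which can happen if $\PP\subset\thickvar\LL{C\beta}$ for a suitable constant, forcing us to pick $\rho$ near $\rho_0$ and verify that nonetheless a definite fraction of $B(\xi,\rho)$ escapes $\thickvar\LL{\epsilon\beta}$ — this works because $\thickvar\LL{\epsilon\beta}\cap\PP$ has $\lambda_\PP$-measure $\lesssim\epsilon\beta\cdot\rho^{k-1}$, which for $\epsilon$ small is a tiny fraction of $\lambda_\PP(B(\xi,\rho))\asymp\rho^k\gtrsim\beta^k$, provided $\rho\gtrsim\beta$).

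Third, and this is the substantive case, suppose $\LL$ meets $\PP$ more or less transversally, so $\LL'=\LL\cap\PP$ is an affine subspace of $\PP$ of codimension $\ge 1$ and $\thickopenvar{\LL'}{\beta}$ is a genuine slab in $\R^k$ of width $\asymp\beta$. Here I would simply take $\rho$ of order $\beta$ — say $\rho=\beta$ after checking $\beta\le\rho_0$, or $\rho=\beta/2$, whichever the constant bookkeeping prefers — and observe by an elementary volume computation in $\R^k$ that a ball of radius $\rho\asymp\beta$ centered at a point within $\epsilon\beta$ of the slab spine has a definite fraction of its volume in the set $\{$distance to $\LL'$ between $\epsilon\beta$ and $\beta\}$: the ``bad'' part within $\epsilon\beta$ is a sub-slab of relative width $\asymp\epsilon$, contributing at most $O(\epsilon)$ of the total volume, and the complement of $\thickvar\LL\beta$ contributes at most another bounded fraction provided $\rho\le(1-\text{const})\beta$... actually since we want points \emph{inside} $\thickvar\LL\beta$, I would instead take $\rho$ slightly \emph{less} than $\beta$ (e.g. $\rho=\beta/2$), so that $B(\xi,\rho)\subset\thickvar\LL{\beta}$ automatically (as $\xi$ is within $\epsilon\beta\le\beta/2$ of $\LL$ and $\rho=\beta/2$), reducing the requirement to: $\lambda_\PP\big(B(\xi,\beta/2)\butnot\thickvar\LL{\epsilon\beta}\big)\asymp_\times\lambda_\PP(B(\xi,\beta/2))$, which holds for $\epsilon$ small by the same $O(\epsilon)$-volume estimate. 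Combining this with the weight comparison from the first step yields \eqref{existsrho2}.

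The main obstacle I anticipate is \emph{not} the Euclidean volume estimate (which is routine) but the interaction between the two truncations: the ball $B(\eta,\sqrt\beta)$ cutting down $\w\NN$, and the need for $\rho\ge\beta/2$. When $\xi$ is extremely close to $\eta$ (distance $\ll\beta$), the ball $B(\xi,\beta/2)$ is well inside $B(\eta,\sqrt\beta)$ and everything is clean; the delicate regime is $\Dist(\xi,\eta)\asymp\sqrt\beta$, where $B(\xi,\beta/2)$ may straddle the boundary sphere $\del B(\eta,\sqrt\beta)$. One must check that even then the intersection $B(\xi,\beta/2)\cap B(\eta,\sqrt\beta)$ still contains a definite $\lambda_\PP$-fraction of $B(\xi,\beta/2)$ — this is true because $\sqrt\beta\gg\beta/2$ for small $\beta$, so locally $\del B(\eta,\sqrt\beta)$ looks like a hyperplane through a point at distance $\le\beta/2$ from $\xi$, cutting off at most half (up to lower-order terms) of the small ball — and that the defining inequality $\Dist(\xi,\eta)\le\sqrt{\epsilon\beta}$ (from $\xi\in\w\NN(\LL,\epsilon\beta)$) combined with $\epsilon$ small actually pushes $\xi$ safely into the interior, so this straddling case may not even arise. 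I would also need to confirm that the case analysis in the second paragraph (transverse versus non-transverse $\LL$) can be made uniform, i.e. that the ``angle'' between $\LL$ and $\PP$ enters only through constants that are absorbed; since $\PP$ is a \emph{fixed} generalized sphere (as $\eta\in P_0$ is one of finitely many) this is unproblematic.
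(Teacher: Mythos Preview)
Your argument has a genuine gap in the case where $\PP$ is nearly contained in $\LL$ (for instance $\PP\subset\LL$, which is certainly allowed since $\LL\in\Sph$ is an arbitrary codimension-one generalized sphere). In that extreme, $\thickvar\LL{\epsilon\beta}\cap\PP=\PP$, so no point of $B(\xi,\rho)\cap\PP$ escapes $\thickvar\LL{\epsilon\beta}$; the only way out of $\w\NN(\LL,\epsilon\beta)=\thickvar\LL{\epsilon\beta}\cap B(\eta,\sqrt{\epsilon\beta})$ is to leave the ball $B(\eta,\sqrt{\epsilon\beta})$. With your choice $\rho=\beta/2$ one has $\beta/2\ll\sqrt{\epsilon\beta}$ for small $\beta$, so if $\xi$ sits well inside $B(\eta,\sqrt{\epsilon\beta})$ (say $\xi=\eta$) then $B(\xi,\beta/2)\subset B(\eta,\sqrt{\epsilon\beta})$ and the left side of \eqref{existsrho2} vanishes. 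Your fallback ``pick $\rho$ near $\rho_0$'' also fails: once $\rho\gg\sqrt\beta$ the right-hand side $\lambda_\eta(B(\xi,\rho))$ dominates $\lambda_\eta(\w\NN(\LL,\beta))$. The correct scale here is $\rho\asymp\sqrt\beta$, which is neither $\beta/2$ nor $\rho_0$; the paper obtains it automatically by setting $\rho=\Dist(\xi,\PP\butnot\w\NN(\LL,\beta))$, which equals $\sqrt\beta-\Dist(\eta,\xi)$ in this regime.

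There is a second problem: your bounded-distortion claim for the weight is false. The hypothesis $\xi\in B(\eta,\sqrt{\epsilon\beta})$ gives only an \emph{upper} bound on $\Dist(\eta,\xi)$, so $\xi=\eta$ is permitted, and then the density $\Dist(\eta,\cdot)^{2\delta-2k}$ is not comparable to a constant on $B(\xi,\beta/2)$ (it blows up if $\delta<k$, vanishes if $\delta>k$). Consequently $\lambda_\eta\given B(\xi,\rho)$ is \emph{not} $\asymp_\times c\,\lambda_\PP\given B(\xi,\rho)$, and your Lebesgue-volume estimate for the $\epsilon\beta$-slab does not transfer to $\lambda_\eta$; indeed if $\eta\in\LL$ the slab $\thickvar\LL{\epsilon\beta}$ captures the singularity. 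The paper sidesteps this entirely: instead of a volume-fraction argument it exhibits a full sub-ball $B(\zeta,\rho/4)\subset B(\xi,\rho)\butnot\w\NN(\LL,\epsilon\beta)$ with $\zeta\in\PP$, and then invokes only the Federer property of $\lambda_\eta$, which holds despite the singularity at $\eta$.
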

In the proof of Lemma \ref{lemmaexistsrho1} from Lemma \ref{lemmaexistsrho2}, we will frequently use the asymptotic
\begin{equation}
\label{hDprime}
|h'| \asymp_\times e^{-\|h\|} \asymp_\times \Dist^2(\eta,h(\DD)) \text{ on $\DD$ ($h\in H$)},
\end{equation}
which can be proven by conjugating $\eta$ to $\infty$ and then comparing the Euclidean and spherical metrics (noting that in the Euclidean metric, elements of $H$ act as isometries). Note that by applying Theorem \ref{theoremGMVT} to \eqref{hDprime}, we get
\begin{equation}
\label{hDdiam}
\Diam(h(\DD)) \asymp_\times \Dist^2(\eta,h(\DD)).
\end{equation}
\begin{proof}[Proof of Lemma \ref{lemmaexistsrho1} using Lemma \ref{lemmaexistsrho2}]
We let $\rho_0,\epsilon,\beta,\LL,\xi,\rho$ denote the variables appearing in the desired Lemma \ref{lemmaexistsrho1}, and we let $\what\rho_0,\what\epsilon,\what\beta,\what\LL,\xihat,\what\rho$ denote the variables appearing in the known Lemma \ref{lemmaexistsrho2}. Fix $\rho_0 > 0$, choose $\what\rho_0 \leq \rho_0/3$ to be determined, let $\what\epsilon > 0$ be given, and let $\epsilon = \what\epsilon/C^2 > 0$, where $C \geq 1$ is large to be determined. Fix $0 < \beta \leq 1$, $\LL\in\Sph$, and $\xi\in \Lambda\cap \NN(\LL,\epsilon\beta)$, let $\what\beta = \beta/C$ and $\what\LL = \LL$, and note that
\[
\epsilon\beta < \what\epsilon\what\beta < \what\beta < \beta.
\]
By \eqref{hDdiam}, we may choose $\xihat\in\PP$ so that $\Dist(\xi,\xihat) \lesssim_\times \Dist^2(\eta,\xi)$. Combining with \eqref{P0} gives
\begin{align*}
\Dist(\eta,\xihat) &\lesssim_\times \sqrt{\epsilon\beta},&
\Dist(\xihat,\what\LL) &\lesssim_\times \epsilon\beta.
\end{align*}
So by choosing $C$ sufficiently large, we can guarantee that $\xihat\in \w\NN(\what\LL,\what\epsilon\what\beta)$. Then we can let $\what\beta/2 \leq \what\rho \leq \what\rho_0$ be given as in Lemma \ref{lemmaexistsrho2}. Let $\rho = 3\what\rho \leq 3\what\rho_0 = \rho_0$. To complete the proof we need to show:
\begin{align} \label{geometric1}
\mu\big(B(\xi,\rho)\cap \NN(\LL,\beta)\butnot \NN(\LL,\epsilon\beta)\big) &\gtrsim_\times \lambda_\eta\big(B(\xihat,\what\rho)\cap \w\NN(\what\LL,\what\beta)\butnot \w\NN(\what\LL,\what\epsilon\what\beta)\big)\\ \label{geometric2}
\mu\big(B(\xi,\rho)\big) &\lesssim_\times \lambda_\eta\big(B(\xihat,5\what\rho)\big).
\end{align}
(\eqref{geometric2} suffices since $\lambda_\eta$ is Federer.) Fix $h\in H$, and we will demonstrate \eqref{geometric1}-\eqref{geometric2} via their intersections with $h(\DD)$, see Figure \ref{figureHello}, i.e. we will show that
\begin{align} \label{geometric3}
\mu\big(h(\DD)\cap B(\xi,\rho)\cap \NN(\LL,\beta)\butnot \NN(\LL,\epsilon\beta)\big) &\gtrsim_\times \lambda_\eta\big(h(\DD)\cap B(\xihat,\what\rho)\cap \w\NN(\what\LL,\what\beta)\butnot \w\NN(\what\LL,\what\epsilon\what\beta)\big)\\ \label{geometric4}
\mu\big(h(\DD)\cap B(\xi,\rho)\big) &\lesssim_\times \lambda_\eta\big(h(\DD)\cap B(\xihat,5\what\rho)\big).
\end{align}
The following consequence of \eqref{pattersonsullivan} and \eqref{hDprime} will be useful in proving both \eqref{geometric3} and \eqref{geometric4}:
\begin{equation}
\label{mulambdaasymp}
\mu(h(\DD)) \asymp_\times e^{-\delta\|h\|} \asymp_\times \Dist^{2\delta}(\eta,h(\DD)) \asymp_\times \lambda_\eta(h(\DD)).
\end{equation}
Also, by \eqref{P0}, if we choose $C$ sufficiently large then
\begin{equation}
\label{xiwhatxi}
\Dist(\xi,\xihat) \leq \what\rho.
\end{equation}

\begin{subproof}[Proof of \eqref{geometric4}]
To avoid trivialities, suppose that $h(\DD)\cap B(\xi,\rho) \neq \emptyset$. Then by \eqref{P0},
\[
\Dist(\eta,h(\DD)) \leq \Dist(\eta,\xi) + \rho \lesssim_\times \sqrt{\epsilon\beta} + \rho.
\]
Applying \eqref{hDdiam} gives
\begin{align*}
\Diam(h(\DD)) &\lesssim_\times \epsilon\beta + \rho^2 = \what\epsilon\what\beta/C + 4\what\rho^2\\
&\leq_\pt (2\what\epsilon/C + 4\what\rho_0) \what\rho \since{$\what\beta/2 \leq \what\rho \leq \what\rho_0$}
\end{align*}
and thus by choosing $\what\rho_0$ small enough and $C$ large enough, we get $\Diam(h(\DD)) \leq \what\rho$. Combining with \eqref{xiwhatxi} gives $h(\DD) \subset B(\xihat,5\what\rho)$, and \eqref{mulambdaasymp} completes the proof.
\end{subproof}
\begin{subproof}[Proof of \eqref{geometric3}]
To avoid trivialities, suppose that
\begin{equation}
\label{avoidtrivialities}
h(\DD)\cap B(\xihat,\what\rho)\cap \w\NN(\what\LL,\what\beta)\butnot \w\NN(\what\LL,\what\epsilon\what\beta)\neq\emptyset.
\end{equation}
By \eqref{xiwhatxi} we have $B(\xihat,\what\rho) \subset B(\xi,\rho)$ and thus $h(\DD)\cap B(\xi,\rho) \neq \emptyset$, so the above argument shows that $\Diam(h(\DD)) \leq \what\rho$ and combining with \eqref{xiwhatxi} gives $h(\DD) \subset B(\xi,\rho)$. On the other hand, since $h(\DD)\cap B(\eta,\what\beta^{1/2}) \neq \emptyset$, we have $\Dist(\eta,h(\DD)) \leq \what\beta^{1/2}$ and thus by \eqref{hDdiam}, $\Diam(h(\DD)) \lesssim_\times \what\beta$. If $C$ is sufficiently large, then this implies $\Diam(h(\DD)) \leq \beta/2$ and thus $h(\DD) \subset \NN(\LL,\beta)$. So by \eqref{mulambdaasymp}, to complete the proof it is enough to show that
\[
\mu\big(h(\DD)\butnot \NN(\LL,\epsilon\beta)\big) \asymp_\times \mu\big(h(\DD)\big).
\]
Again to avoid trivialities, let us assume that
\begin{equation}
\label{avoidtrivialities2}
h(\DD) \cap \NN(\what\LL,\epsilon\beta) \neq \emptyset.
\end{equation}
From \eqref{avoidtrivialities}, we have $h(\DD) \butnot \w\NN(\what\LL,\what\epsilon\what\beta) \neq \emptyset$, so either
\begin{equation}
\label{geometricoption1}
h(\DD) \butnot \NN(\what\LL,\what\epsilon\what\beta) \neq \emptyset
\end{equation}
or
\begin{equation}
\label{geometricoption2}
h(\DD) \butnot B\big(\eta,(\what\epsilon\what\beta)^{1/2}\big) \neq \emptyset.
\end{equation}
Combining \eqref{avoidtrivialities2}, \eqref{geometricoption1}, and \eqref{hDdiam} gives $e^{-\|h\|} \asymp_\times \Diam(h(\DD)) \geq \what\epsilon\what\beta - \epsilon\beta$, while combining \eqref{geometricoption2} and \eqref{hDdiam} gives $e^{-\|h\|} \asymp_\times \Dist^2(\eta,h(\DD)) \gtrsim_\times \what\epsilon\what\beta$. Either way we get $e^{-\|h\|} \gtrsim_\times \what\epsilon\what\beta$, and thus it suffices to show
\[
\mu\big(h(\DD)\butnot \NN(\LL,K e^{-\|h\|}/C)\big) \asymp_\times \mu\big(h(\DD)\big)
\]
where $K > 1$ is a constant. By \eqref{hDprime} and \eqref{pattersonsullivan}, it is enough to show that
\begin{equation}
\label{ETSgeometricreduction}
\mu\big(\DD\butnot \NN(h^{-1}(\LL),K/C)\big) \asymp_\times \mu(\DD).
\end{equation}
We proceed by contradiction; if no $C$ exists satisfying \eqref{ETSgeometricreduction} (for all $\LL\in\Sph$ and $h\in H$), then a compactness argument proves the existence of $\LL_0\in\Sph$ such that $\mu(\DD\butnot\LL_0) = 0$. A zooming argument shows that we may take $\LL_0$ so that $\mu(\Lambda\butnot\LL_0) = 0$, i.e. $\Lambda\subset\LL_0$. But this contradicts the assumption that $G$ is irreducible.
\end{subproof}
This completes the proof of Lemma \ref{lemmaexistsrho1} modulo Lemma \ref{lemmaexistsrho2}.
\end{proof}

\begin{reduction}
\label{reductionsphtoeuc}
In the proof of Lemma \ref{lemmaexistsrho2}, we can without loss of generality assume that $\PP$ is a plane (rather than a sphere) and that $\eta = \0$, and we can work in the Euclidean metric rather than the spherical metric.
\end{reduction}
\begin{proof}
The first reduction follows by applying a fixed conjugation in which we move $\eta$ to $\0$ and some other point of $\PP$ to $\infty$. The second reduction follows from choosing $\rho_0$ small enough so that $\Dist_\euc \asymp_\times \Dist_\sph$ on $B_\sph(\eta,\rho_0)$, and modifying the constants $\epsilon,\beta,\rho$ appropriately.
\end{proof}

We are now ready to finish the proof of Theorem \ref{theoremGFQD} by proving the Euclidean version of Lemma \ref{lemmaexistsrho2}:

\begin{figure}[h!]
\begin{center}
\begin{tikzpicture}[line cap=round,line join=round,>=triangle 45,x=1.0cm,y=1.0cm]
\clip(-5.681385247265749,-0.7580748479629699) rectangle (5.807678099921845,5.0295209924730315);
\draw[thick](0.0,-0.0) circle (3.0cm);
\draw [dash pattern=on 2pt off 2pt] (0.0,-0.0) circle (4.41cm);
\draw [shift={(0.0,-0.0)},dash pattern=on 1pt off 1pt]  plot[domain=0.21391741110522272:2.9422391174694305,variable=\t]({1.0*2.6960221843722216*cos(\t r)+-0.0*2.6960221843722216*sin(\t r)},{0.0*2.6960221843722216*cos(\t r)+1.0*2.6960221843722216*sin(\t r)});
\draw [shift={(0.0,-0.0)},dash pattern=on 1pt off 1pt]  plot[domain=0.19983276794005583:2.9816716353585653,variable=\t]({1.0*3.299919748638474*cos(\t r)+-0.0*3.299919748638474*sin(\t r)},{0.0*3.299919748638474*cos(\t r)+1.0*3.299919748638474*sin(\t r)});
\begin{scriptsize}
\draw [fill=black] (0.0,-0.0) circle (0.75pt);
\draw[color=black] (0.002163926375931373,-0.19882401394331134) node {$y$};

\draw[color=black] (-4.03,3.03) node {$\thickvar\LL{\beta}$};
\draw[color=black] (-1.3,3.42) node {$\thickvar\LL{\epsilon\beta}$};
\draw[color=black] (-3.24,-0.45) node {$\LL$};

\draw [fill=black] (0.0,2.0) circle (0.75pt);
\draw[color=black] (-0.02384774032265756,1.8170801551973184) node {$y_1$};

\draw [fill=black] (2.01,2.0) circle (0.75pt);
\draw[color=black] (2.15,1.85) node {$\xi$};
\draw(2.01,2.0) circle (1.8cm);

\draw [fill=black] (3.36,2.0) circle (0.75pt);
\draw[color=black] (3.4,1.8) node {$\zeta$};
\draw(3.36,2.0) circle (0.45cm);

\draw (-6,2) -- (6,2);

\draw[arrows=<-,line width=.1pt] (3.55,1.65) -- (4.6,0.18);

\draw[color=black] (3.4,3.7) node {$B(\xi,\rho)$};
\draw[color=black] (5.2,0.17) node {\tiny $B(\zeta,\rho/4)$};
\end{scriptsize}
\end{tikzpicture}
\caption{Various entities appearing in the proof of Lemma \ref{lemmaexistsrho2}, illustrating the inclusion \eqref{geometricv2} in the case $\rho = \Dist(\xi,\PP\butnot \thickvar\LL\beta)$.}
\label{figurelemmaexistsrho2}
\end{center}
\end{figure}

\begin{proof}[Proof of Lemma \ref{lemmaexistsrho2}]
Let $\rho = \Dist(\xi,\PP\butnot \w\NN(\LL,\beta))$. Then
\[
\rho \geq \Dist(\w\NN(\LL,\epsilon\beta),\PP\butnot\w\NN(\LL,\beta)) \geq (\beta - \epsilon\beta) \wedge(\sqrt\beta - \sqrt{\epsilon\beta}) \geq \beta/2
\]
and on the other hand, $\rho \leq \Diam(\w\NN(\LL,\beta)) \leq 2\sqrt\beta \leq \rho_0$. By construction, $B(\xi,\rho)\cap\PP \subset \w\NN(\LL,\beta)$. Since the measure $\lambda_\eta$ is Federer, to prove \eqref{existsrho2} it suffices to show that there exists a ball $B(\zeta,\rho/4)$ such that $\zeta\in\PP$ and
\begin{equation}
\label{geometricv2}
B(\zeta,\rho/4)\cap \PP \subset B(\xi,\rho)\butnot \w\NN(\LL,\epsilon\beta), 
\end{equation}
see Figure \ref{figurelemmaexistsrho2}. We consider two cases:
\begin{itemize}
\item Suppose $\rho = \Dist(\xi,\PP\butnot B(\0,\sqrt\beta)) = \sqrt\beta - \|\xi\|$. Let $\vv\in\PP$ be a unit vector in the direction of $\xi$ (in an arbitrary direction if $\xi = \0$) and let $\zeta = \xi + 3\rho\vv/4$. Then for all $\zz\in B(\zeta,\rho/4)$,
\[
\|\zz\| \geq \|\zeta\| - \rho/4 \geq \|\xi\| + \rho/2 \geq \sqrt\beta/2 \geq \sqrt{\epsilon\beta}
\]
assuming $\epsilon\leq 1/4$. Thus $B(\zeta,\rho/4)\cap B(\0,\sqrt{\epsilon\beta}) = \emptyset$, demonstrating \eqref{geometricv2}.
\item Suppose $\rho = \Dist(\xi,\PP\butnot \NN(\LL,\beta))$. Then $\PP\nsubset \LL$, since otherwise $\rho = \infty$ which contradicts the definition of $\rho$. Suppose that $\LL$ is a sphere (the case where $\LL$ is a hyperplane is easier and will be omitted), and write $\LL = \{\xx : \|\yy - \xx\| = k\}$ for some $\yy\in\R^d$ and $k > 0$. Write $\yy = \yy_1 + \yy_2$ with $\yy_1\in \PP$ and $\yy_2\in \PP^\perp$. Let $\vv\in\PP$ be a unit vector in the direction of $\xi - \yy_1$ (in an arbitrary direction if $\xi = \yy_1$), and let $\zeta = \xi + 3\rho\vv/4$. Then for all $\zz\in B(\zeta,\rho/4)\cap \PP$,
\[
\Dist(\zz,\LL) = \|\zz - \yy\| - k \geq \sqrt{(\|\xi - \yy_1\| + \rho/2)^2 + \|\yy_2\|^2} - k.
\]
We aim to show that the right hand side exceeds $\epsilon\beta$. Indeed, since $\xi\in \w\NN(\LL,\epsilon\beta)$ we have
\[
\epsilon\beta \geq \Dist(\xi,\LL) \geq k - \|\xi - \yy\| = k - \sqrt{\|\xi - \yy_1\|^2 + \|\yy_2\|}
\]
and since $\xi + \rho\vv\in B(\xi,\rho) \subset \del\w\NN(\LL,\beta)$, we have
\[
\beta = \Dist(\xi + \rho\vv,\LL) = \|\xi + \rho\vv\| - k = \sqrt{(\|\xi - \yy_1\| + \rho)^2 + \|\yy_2\|^2} - k,
\]
i.e.
\begin{align*}
\sqrt{\|\xi - \yy_1\|^2 + \|\yy_2\|^2} - k &\geq -\epsilon\beta\\
\sqrt{(\|\xi - \yy_1\| + \rho)^2 + \|\yy_2\|^2} - k &\geq \beta
\end{align*}
which implies
\[
\sqrt{(\|\xi - \yy_1\| + \rho/2)^2 + \|\yy_2\|^2} - k \geq (1/4)\beta + (3/4)(-\epsilon\beta).
\]
(Notice that the general inequality \[\sqrt{(a + \delta x)^2 + b^2} \geq \delta^2 \sqrt{(a + x)^2 + b^2} + (1 - \delta^2) \sqrt{a^2 + b^2}\] ($a,b,x\geq 0$, $0\leq \delta\leq 1$) can be verified by first checking the inequality \[(a + \delta x)^2 + b^2 \geq \delta^2 [(a + x)^2 + b^2] + (1 - \delta^2) [a^2 + b^2]\] and then using the downward convexity of the square root function.) 
Assuming $\epsilon\leq 1/7$, this gives $\Dist(\zz,\LL) \geq \epsilon\beta$ and thus \eqref{geometricv2} holds.
\qedhere\end{itemize}
\end{proof}

\draftnewpage
\subsection{Proof of friendliness}
Like the proof of Lemma \ref{lemmaglobaldecay}, the proof of Theorem \ref{theoremGFfriendly} proceeds via a series of reductions, in which the final question is about the smooth measure $\lambda_\0$ defined above, and does not depend on the initial measure $\mu$. Since many of the arguments are similar to those in the proof of Lemma \ref{lemmaglobaldecay}, we will not provide the full details of the reductions.

We will actually prove a result which is slightly stronger than friendliness, namely ``friendliness to spheres'':

\begin{theorem}[Friendliness to spheres]
\label{theoremGFfriendlyspheres}
There exists $\alpha > 0$ such that for all $\xi\in\Lambda$, $0 < \rho \leq 1$, $\beta > 0$, and $\LL\in\Sph$, if $B = B(\xi,\rho)$ then
\[
\mu\big(\thickvar\LL{\beta \|d_\LL\|_{\mu,B}}\cap B\big) \lesssim_\times \beta^\alpha\mu(B),
\]
and if $k_{\min} = d$ then
\[
\mu\big(\thickvar\LL{\beta \rho}\cap B\big) \lesssim_\times \beta^\alpha\mu(B).
\]
\end{theorem}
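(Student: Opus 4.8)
The plan is to follow the same template as the proof of Lemma~\ref{lemmaglobaldecay}: a chain of reductions that successively normalizes the quadruple $(\xi,\rho,\beta,\LL)$ by conjugating with elements of $G$, until the remaining assertion concerns only Lebesgue measure on a plane and no longer sees $\mu$. Throughout one may assume $\beta < 1$ --- for $\beta \ge 1$ the conclusion is immediate from $\mu(\thickvar\LL{\beta\|d_\LL\|_{\mu,B}}\cap B) \le \mu(B)$ --- and one works with the spherical metric, as in the rest of this section.

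First I would dispose of the ``thick part''. Set $t = -\log\rho$, $x = \xi_t$, and use \eqref{GFdef} to split according to whether $x \in g(B_\hyp(\zero,\sigma))$ for some $g \in G$ or $x \in H$ for some $H \in \scrH$. In the former case, conjugation by $g^{-1}$ carries $B = B(\xi,\rho)$ to a ball of radius $\asymp_\times 1$ centered at a point of $\Lambda$; since $\Lambda$ is compact and, by irreducibility, is not contained in any generalized sphere, Lemma~\ref{lemmaglobaldecay} together with the Federer property gives $\|d_{g^{-1}(\LL)}\|_{\mu,g^{-1}(B)} \asymp_\times 1$, hence $\|d_\LL\|_{\mu,B} \asymp_\times \rho$. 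Applying Lemma~\ref{lemmaglobaldecay} to $g^{-1}(B)$ and transporting the estimate back through \eqref{pattersonsullivan}, Theorem~\ref{theoremGMVT}, and Theorem~\ref{theoremboundeddistortion} yields the desired bound; and because $\|d_\LL\|_{\mu,B} \asymp_\times \rho$ here, this covers the $k_{\min} = d$ form as well. So the thick part contributes nothing beyond Lemma~\ref{lemmaglobaldecay}.

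The substance is the cusp case $x \in H$. Choosing a top representation $H = g(H_\eta)$ with $\eta \in P_0$ (Theorem~\ref{theoremtoprepresentation}) and conjugating by $g^{-1}$, we reduce --- exactly as in Reduction~\ref{reductionP0} --- to a ball centered near the bounded parabolic point $\eta$, at the ``large scale'' relative to $\eta$. Here I would approximate $\mu$ near $\eta$ by the homogeneous measure $\lambda_\eta$ on the translation plane $\PP = \PP_\eta$ from \eqref{lambdadeltadef}, using the orbit decomposition $\Lambda\setminus\{\eta\} = G_\eta(\DD_\eta)$ and the comparisons \eqref{hDprime}, \eqref{hDdiam}, \eqref{mulambdaasymp} that are already in place: summing those comparisons over the copies $h(\DD_\eta)$ meeting the ball controls both the numerator $\mu(\thickvar\LL{\cdot}\cap B)$ and the denominator $\mu(B)$, and also lets one relate $\|d_\LL\|_{\mu,B}$ (defined via $\Supp(\mu)$) to the analogous quantity measured against $\PP\cap B$, the discrepancy being bounded by the diameters of those copies via \eqref{hDdiam}. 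The problem then becomes an estimate for $\lambda_\eta$ on the $k$-plane $\PP$, $k = k_\eta = \dim\PP$, which I would split on the position of $\LL\in\Sph$ relative to $\PP$. If $\LL$ meets $\PP$ transversally, then $\thickvar\LL\beta\cap\PP$ is comparable to a $\beta$-neighborhood in $\PP$ of a codimension-one subset, $\|d_\LL\|_{\mu,B}$ is comparable to the radius, and the estimate reduces to a direct computation for the power-law density $\Dist(\eta,\cdot)^{2\delta - 2k}$ against $\thickvar\LL\beta$, integrability near $\eta$ being exactly $\delta > k/2$ (Theorem~\ref{theorembeardon}). If instead $\LL$ is nearly tangent to $\PP$, then $\thickvar\LL\beta$ can swallow a large fraction of $\PP\cap B$, but in that regime the support in $B$ deviates from $\LL$ only by the transverse ``sag'' of $\PP$ across $B$, so $\|d_\LL\|_{\mu,B}$ is itself small, and one must show that the $\beta$-enlargement of this already-small quantity still misses a fixed proportion of the mass (this is the step where, when $\PP\subset\LL$, one recursively reapplies Lemma~\ref{lemmaglobaldecay} inside a single copy $h(\DD_\eta)$). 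When $k_{\min} = d$ the plane $\PP$ is all of $\del\H$ minus a point, there is no flattening, $\|d_\LL\|_{\mu,B}$ is comparable to the radius unconditionally, the tangent subcase does not arise, and one obtains the stronger $\thickvar\LL{\beta\rho}$ inequality.

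I expect the near-tangent subcase at a cusp of rank $k < d$ to be the main obstacle: one needs a clean, scale-invariant estimate to the effect that if $\LL$ is ``close to $\PP$ across $B$'' with some closeness parameter $\theta$, then $\|d_\LL\|_{\mu,B}$ divided by the radius is bounded below by a fixed power of $\theta$, so that $\thickvar\LL{\beta\|d_\LL\|_{\mu,B}}\cap B$ behaves like a $\beta$-neighborhood at the correct scale inside $\PP$; this is the general-rank analogue of the delicate two-case Lebesgue estimate at the end of the proof of Lemma~\ref{lemmaexistsrho2}, and extracting it uniformly in $\xi\in\Lambda$ --- with no almost-everywhere logarithm law at one's disposal, unlike in the quasi-decay proof --- is where the care lies. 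Once this is isolated, the terminal reduction, as in Reduction~\ref{reductionsphtoeuc}, conjugates $\eta$ to $\zero$ and $\PP$ to a coordinate subspace, passes from the spherical to the Euclidean metric, and leaves a purely Euclidean inequality about $\beta$-neighborhoods of spheres intersected with a coordinate subspace carrying a homogeneous density, which is checked directly along the lines of the proof of Lemma~\ref{lemmaexistsrho2}.
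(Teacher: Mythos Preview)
Your overall architecture matches the paper's: reduce to the cusp via a top representation (the paper's Reduction~\ref{reductionP0friendly}), approximate $\mu$ by $\lambda_\eta$ on $\PP = \PP_\eta$ using the orbit decomposition and \eqref{hDprime}--\eqref{mulambdaasymp}, and finish with a Lebesgue-type statement after passing to the Euclidean picture. The thick-part argument you sketch is also how the paper handles it.

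Where your plan diverges is the dichotomy in the cusp. You split geometrically into ``$\LL$ transversal to $\PP$'' versus ``$\LL$ nearly tangent'', and in the latter regime you anticipate needing a \emph{lower} bound on $\|d_\LL\|_{\mu,B}$ in terms of a closeness parameter. The paper avoids this entirely. It introduces $\sigma_1 = \max_B \Dist^2(\eta,\cdot)$ (which by \eqref{hDdiam} controls the diameters of the copies $h(\DD)$ meeting $B$) and $\sigma_2 = \|d_\LL\|_{B\cap\PP}$, and uses only the \emph{upper} bound $\|d_\LL\|_{\mu,B} \lesssim_\times \sigma_1 \vee \sigma_2$ (this is \eqref{sigma12}; when $k_{\min}=d$ one has $\sigma_2\asymp\rho$ automatically). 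The dichotomy is then quantitative and $\beta$-dependent: either $\sigma_1 \le \beta^{1/2}\sigma_2$ or $\sigma_1 \ge \beta^{1/2}\sigma_2$. In the first case the thickness $\sigma_1$ of the orbit pieces is negligible and the problem reduces to friendliness of $\lambda_\eta$ to generalized spheres on $\PP$, which is Lemma~\ref{lemmalebesguefriendly} (quoted from \cite[Theorem 2.1]{KLW}, not redone by hand). In the second case --- your ``near-tangent'' regime --- the paper does \emph{not} try to show the neighborhood misses $\lambda_\eta$-mass. Instead it applies Lemma~\ref{lemmaglobaldecay} inside each copy $h(\DD)$ (this is your parenthetical idea, promoted to the main argument: see \eqref{friendlyred3}--\eqref{friendlyred4}) and sums; the resulting bound is the integral \eqref{GFfriendly1}, and the condition $\sigma_1 \ge \beta^{1/2}\sigma_2$ is exactly what makes that integral $\lesssim \beta^{\alpha/2}\lambda_\eta(B)$.

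So the obstacle you flag is real for the route you propose but is sidestepped by the paper's $\beta$-dependent threshold and the integral alternative in Lemma~\ref{lemmaGFfriendlyleb}. Your approach could likely be pushed through, but the paper's formulation is cleaner precisely because it never needs control of $\|d_\LL\|_{\mu,B}$ from below.
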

Note that Theorem \ref{theoremGFfriendlyspheres} implies the hard direction of Theorem \ref{theoremGFfriendly}. The easy direction is proven as follows: Suppose that $k_\eta < d$ for some $\eta$, and we will show that $\mu$ is not absolutely decaying. By Theorems \ref{theoremBPP} and \ref{theoremtranslationplane}, there exist two generalized spheres $\LL_1,\LL_2\in \Sph$ which are tangent at $\eta$ such that $\Lambda$ is contained in the region between $\LL_1$ and $\LL_2$. Let $\LL$ be the hyperplane tangent to both $\LL_1$ and $\LL_2$ at $\eta$. Then for all $\rho > 0$,
\[
\Lambda\cap B(\eta,\rho) \subset \thickvar\LL{C\rho^2},
\]
where $C > 0$ is a large constant. This implies that $\mu$ is not absolutely decaying (even stronger, $\Lambda$ is not hyperplane diffuse in the sense of \cite[Definition 4.2]{BFKRW}).

We now proceed with the proof of Theorem \ref{theoremGFfriendlyspheres}.

\begin{reduction}
\label{reductionP0friendly}
In the proof of Theorem \ref{theoremGFfriendlyspheres}, we can without loss of generality assume that
\begin{equation}
\label{P0friendly}
\rho \gtrsim_\times \Dist^2(\xi,\eta)
\end{equation}
for some $\eta\in P_0$, where $P_0$ is as in Theorem \ref{theoremcuspfiniteness}.
\end{reduction}
\begin{proof}
Similar to the proof of Reduction \ref{reductionP0}.
\end{proof}

As in the proof of Lemma \ref{lemmaglobaldecay}, fix $\eta\in P_0$, and let $\DD = \DD_\eta$, $\PP = \PP_\eta$, $k = k_\eta = \dim(\PP)$, $H = G_\eta$, and $\lambda_\eta$ be as before. We will prove the reduction of Theorem \ref{theoremGFfriendlyspheres} to the following lemma in a manner analogous to the reduction of Lemma \ref{lemmaexistsrho1} to Lemma \ref{lemmaexistsrho2}:

\begin{lemma}
\label{lemmaGFfriendlyleb}
There exists $\alpha_0 > 0$ such that for all $0 < \alpha \leq \alpha_0$, $\xi\in\PP$, $0 < \rho \leq 1$, $\beta > 0$, and $\LL\in\Sph$, if $B = B(\xi,\rho)$ and
\begin{align*}
\sigma_1 &= \max_B \Dist^2(\eta,\cdot),&
\sigma_2 &= \|d_\LL\|_{B\cap\PP},
\end{align*}
then either
\begin{equation}
\label{GFfriendly1}
\int_{B\cap \PP} \left(\frac{\beta(\sigma_1\vee\sigma_2)}{\Dist^2(\eta,\yy)}\right)^\alpha \;\dee\lambda_\eta(\yy) \lesssim_\times \beta^{\alpha/2} \lambda_\eta\big(B\big)
\end{equation}
or
\begin{equation}
\label{GFfriendly2}
\lambda_\eta\big(\thickvar\LL{K\sigma_1\vee\beta\sigma_2}\cap B\big) \lesssim_\times \beta^{\alpha/2} \lambda_\eta\big(B\big),
\end{equation}
where $K > 0$ is the implied constant of \eqref{hDdiam}.
\end{lemma}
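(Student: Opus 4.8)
The plan is to imitate the reduction chain used for Lemma~\ref{lemmaglobaldecay}: pass to a model in which $\lambda_\eta$ is a weighted Lebesgue measure, strip off the singularity of $\lambda_\eta$ at $\eta$ by a dyadic decomposition about $\eta$, and finish by quoting the friendliness of $k$-dimensional Lebesgue measure itself. By the conjugation permitted in Reduction~\ref{reductionsphtoeuc}, I would take $\PP$ to be a coordinate $k$-plane, $\eta=\0$, and all metrics Euclidean, so that by \eqref{lambdadeltadef}, $\dee\lambda_\eta(\yy)=\|\yy\|^{2\delta-2k}\,\dee\lambda_\PP(\yy)$, where $\lambda_\PP$ is $k$-dimensional Lebesgue measure on $\PP$ and $2\delta-2k>-k$ by Theorem~\ref{theorembeardon}. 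Fix once and for all a constant $\alpha_0>0$ with $\alpha_0<\delta-k/2$, small enough for all the estimates below (in particular $\alpha_0\le\alpha_1$, the Lebesgue--friendliness exponent introduced in the third paragraph). A direct computation yields the two facts about $\lambda_\eta$ that I will use: first, $\lambda_\eta\big(B(\zz,r)\big)\asymp_\times r^k(\|\zz\|\vee r)^{2\delta-2k}$ for $\zz\in\PP$ and $r>0$, so that $\lambda_\eta(B)\asymp_\times\rho^k\sigma_1^{\delta-k}$ since $\sqrt{\sigma_1}=\|\xi\|+\rho\asymp_\times\|\xi\|\vee\rho$; and second, that the singular weight survives an extra power of $\|\yy\|^{-1}$, namely
\[
\int_{B\cap\PP}\left(\frac{\sigma_1}{\|\yy\|^{2}}\right)^{\!\alpha}\dee\lambda_\eta(\yy)\;\asymp_\times\;\lambda_\eta(B)\qquad(0<\alpha\le\alpha_0).
\]
This last estimate is the only place where $\alpha_0<\delta-k/2$, hence Theorem~\ref{theorembeardon}, is used.

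Next I would clear away the degenerate configurations. Since $\Supp(\lambda_\eta)=\PP$ and $\sigma_2=\sup\{\Dist(\yy,\LL):\yy\in B\cap\PP\}$, the set $B\cap\PP$ already lies in $\thickvar\LL{\sigma_2}$; hence if $K\sigma_1\vee\beta\sigma_2\ge\sigma_2$ --- in particular if $\beta\ge 1/2$, or $\sigma_2\le K\sigma_1$, or $\PP\subset\LL$ --- then $\thickvar\LL{K\sigma_1\vee\beta\sigma_2}\cap B\supset\Supp(\lambda_\eta)\cap B$ and \eqref{GFfriendly2} is immediate. Discarding also the case in which $\thickvar\LL{w}\cap B\cap\PP=\emptyset$, where $w:=K\sigma_1\vee\beta\sigma_2<\sigma_2$, I may assume $\beta<1/2$, $K\sigma_1<\sigma_2$, $\PP\not\subset\LL$, and $\thickvar\LL{w}\cap B\cap\PP\ne\emptyset$. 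Under Reduction~\ref{reductionP0friendly} we have $\Dist^2(\xi,\eta)\lesssim_\times\rho$, hence $\sigma_1=(\|\xi\|+\rho)^2\lesssim_\times\rho$; and the non-emptiness together with $\beta<1/2$ forces $\Dist(\xi,\LL)\lesssim_\times\rho$, hence $\sigma_2\lesssim_\times\rho$ and $w\lesssim_\times\rho$.

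The remaining case splits according to the single comparison between $\sigma_2$ and $\beta^{-1/2}\sigma_1$. If $\sigma_2\le\beta^{-1/2}\sigma_1$, then $\beta(\sigma_1\vee\sigma_2)\le\beta^{1/2}\sigma_1$, so $\big(\beta(\sigma_1\vee\sigma_2)/\|\yy\|^2\big)^\alpha\le\beta^{\alpha/2}\big(\sigma_1/\|\yy\|^2\big)^\alpha$ pointwise on $B\cap\PP$, and integrating and invoking the weighted-integral estimate gives \eqref{GFfriendly1}. If instead $\sigma_2>\beta^{-1/2}\sigma_1$, then $w/\sigma_2=(K\sigma_1/\sigma_2)\vee\beta\lesssim_\times\beta^{1/2}$, and I would prove \eqref{GFfriendly2} by a dyadic decomposition: write $B\cap\PP$ as a union of the $O(1)$ annular regions $A_j=\{2^{-j-1}\le\|\yy\|\le 2^{-j}\}\cap B\cap\PP$, $r_j:=2^{-j}$ (a single region of scale $\asymp\rho$ when $\|\xi\|>2\rho$), on each of which the weight $\|\yy\|^{2\delta-2k}$ is constant up to a multiplicative constant, so that $\lambda_\eta$ restricted to $A_j$ is comparable to a multiple of $\lambda_\PP$. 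One then applies the friendliness of $k$-dimensional Lebesgue measure to generalized spheres --- the elementary fact that there is $\alpha_1>0$ with $\lambda_\PP\big(\thickvar\LL{v}\cap B(\zz,r)\cap\PP\big)\lesssim_\times\big(v/\|d_\LL\|_{\lambda_\PP,B(\zz,r)}\big)^{\alpha_1}\lambda_\PP\big(B(\zz,r)\cap\PP\big)$ for $\LL\in\Sph$, whose sphere-versus-hyperplane case analysis is identical to the one carried out in the proof of Lemma~\ref{lemmaexistsrho2} --- to each $A_j$, obtaining $\lambda_\eta\big(\thickvar\LL{w}\cap A_j\big)\lesssim_\times\big(w/\sigma^{(j)}\wedge 1\big)^{\alpha_1}\,r_j^{2\delta-k}$ with $\sigma^{(j)}:=\sup_{A_j}\Dist(\cdot,\LL)$; summing this geometric-type series over $r_j$ descending from $\asymp\sqrt{\sigma_1}$, using the Euclidean relations between $\sigma^{(j)}$, $r_j$, $\sigma_2$ together with $w\lesssim_\times\beta^{1/2}\sigma_2$ and comparing with $\lambda_\eta(B)\asymp_\times\rho^k\sigma_1^{\delta-k}$, one finds that the total is $\lesssim_\times\beta^{c/2}\lambda_\eta(B)$ for some $c=c(\delta,k)>0$ (the inequality $2\delta-k>0$ being what keeps the series summable), which is \eqref{GFfriendly2} once $\alpha_0\le c$.

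The step I expect to be the main obstacle is this last summation, i.e. the elementary but fiddly Euclidean geometry behind the claim that $\thickvar\LL{w}\cap A_j$ occupies only a $\lesssim_\times(w/\sigma^{(j)}\wedge 1)^{\alpha_1}$-fraction of $A_j$ and that the exceptional annuli --- those entirely contained in $\thickvar\LL{w}$, which occur when $\LL$ is nearly tangent to $\PP$ near $\eta$ --- are confined to scales $r_j\lesssim_\times\beta^{1/4}\rho$. This is exactly where the choice of thickening radius $K\sigma_1\vee\beta\sigma_2$ does its work: the term $\beta\sigma_2$ governs the transverse (slab-like) behavior of $\thickvar\LL{w}\cap\PP$, while the term $K\sigma_1$ absorbs the tangential (cap-like) behavior near $\eta$, the inequality $\sigma_1\lesssim_\times\rho$ from Reduction~\ref{reductionP0friendly} being what keeps such a cap small. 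A secondary nuisance is the exponent bookkeeping when $\delta$ is close to $k/2$: to keep the weight $\|\yy\|^{2\delta-2k}$ two-sidedly comparable one must cover each $A_j$ by balls of radius a small fixed fraction of $r_j$, and $\alpha_0$ ends up constrained by $\delta-k/2$, by $2\delta-k$, and by $\alpha_1$ simultaneously.
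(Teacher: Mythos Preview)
Your approach is correct and matches the paper's: the same split on $\sigma_1$ versus $\beta^{1/2}\sigma_2$, the same weighted-integral estimate \eqref{GFfriendly1v2} for the first case, and the same reduction to friendliness of $\lambda_\PP$ for the second. Two remarks on execution. First, your appeal to Reduction~\ref{reductionP0friendly} to get $\sigma_1\lesssim_\times\rho$ is out of place: that reduction concerns the Patterson--Sullivan measure and is not a hypothesis of Lemma~\ref{lemmaGFfriendlyleb}, which must hold for arbitrary $\xi\in\PP$. Fortunately it is not needed. Second, the paper dissolves exactly the ``main obstacle'' you flag: rather than summing over annuli $A_j$ and tracking the tangential/cap annuli individually, it observes from the dyadic asymptotic
\[
\lambda_\0\given B(\0,1)\;\asymp_\times\;\sum_{n\in\N}2^{n(2k-2\delta)}\,\lambda_\PP^{(\euc)}\given B(\0,2^{-n})\setminus B(\0,2^{-(n+1)})
\]
that both target inequalities \eqref{GFfriendly1v2}--\eqref{GFfriendly2v2} reduce to the case $\Dist(\0,B)\ge\rho$, where the weight $\|\yy\|^{2\delta-2k}$ is constant on $B$ up to a fixed factor. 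In that regime \eqref{GFfriendly1v2} is immediate since $\sigma_1/\|\yy\|^2\asymp_\times 1$, and \eqref{GFfriendly2v2} is just friendliness of $\lambda_\PP$ to generalized spheres (Lemma~\ref{lemmalebesguefriendly}), which the paper obtains in one line by stereographic projection and \cite[Theorem 2.1]{KLW} rather than by redoing the case analysis of Lemma~\ref{lemmaexistsrho2}. This reduction-to-a-single-ball-far-from-the-origin replaces your annular summation entirely and removes the exponent bookkeeping you were worried about.
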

\begin{proof}[Proof of Theorem \ref{theoremGFfriendlyspheres} using Lemma \ref{lemmaGFfriendlyleb}]
We let $\alpha,\xi,\rho,\beta,\LL$ denote the variables appearing in the desired Theorem \ref{theoremGFfriendlyspheres}, and we let $\what\alpha_0,\what\alpha,\xihat,\what\rho,\what\beta,\what\LL$ denote the variables appearing in the known Lemma \ref{lemmaGFfriendlyleb}. Let $\what\alpha_0 > 0$ be given, let $\w\alpha > 0$ be as in Lemma \ref{lemmaglobaldecay}, and let $\alpha = \what\alpha = \what\alpha_0 \wedge\w\alpha > 0$. Fix $\xi\in\Lambda$, $\Dist^2(\xi,\eta) \lesssim_\times \rho \leq 1$, $\beta > 0$, and $\LL\in\Sph$, and let $B = B(\xi,\rho)$. By \eqref{hDdiam}, we may choose $\xihat\in\PP$ so that $\Dist(\xi,\xihat) \lesssim_\times \Dist^2(\eta,\xi) \lesssim_\times \rho$. Fix $C \geq 1$ large to be determined, and let $\what\rho = C\rho > 0$, so that $B(\xi,\rho) \subset \what B := B(\xihat,\what\rho)$ assuming $C$ is large enough. Let $\what\beta = C\beta$ and $\what\LL = \LL$. Finally, let
\begin{align*}
\sigma &= \begin{cases}
\|d_\LL\|_{\mu,B} & k_{\min} < d\\
\rho & k_{\min} = d
\end{cases},&
\what\sigma_1 &= \max_{\what B} \Dist^2(\eta,\cdot),&
\what\sigma_2 &= \|d_\LL\|_{\what B\cap\PP}.
\end{align*}
Note that by \eqref{hDdiam}, we have
\begin{equation}
\label{sigma12}
\sigma \lesssim_\times \what\sigma_1 \vee \what\sigma_2.
\end{equation}
(When $k_{\min} = d$, \eqref{sigma12} follows from the asymptotic $\what\sigma_2 \asymp_\times \what\rho \geq \rho$.) Since $\mu$ is Federer, to complete the proof it suffices to show that
\begin{align} \label{friendlyred1a}
\mu\big(\thickvar\LL{\beta \sigma}\cap B\big) &\lesssim_\times \int_{\what B\cap \PP} \left(\frac{\what\beta(\what\sigma_1\vee\what\sigma_2)}{\Dist^2(\eta,\yy)}\right)^{2\alpha} \;\dee\lambda_\eta(\yy)\\ \label{friendlyred1b}
\mu\big(\thickvar\LL{\beta \sigma}\cap B\big) &\lesssim_\times \lambda_\eta\big(\thickvar{\what\LL}{K\what\sigma_1\vee\what\beta\what\sigma_2}\cap \what B\big)\\ \label{friendlyred2}
\mu\big(B(\xi,2\what\rho)\big) &\gtrsim_\times \lambda_\eta\big(\what B\big)
\end{align}
Fix $h\in H$, and we will prove \eqref{friendlyred1a}-\eqref{friendlyred2} via their intersections with $h(\DD)$. The proof of \eqref{friendlyred2} is similar to the proof of \eqref{geometric2}. Suppose $h(\DD) \cap \thickvar\LL{\beta\sigma}\cap B\neq\emptyset$, so that by \eqref{hDdiam} and \eqref{P0friendly}, we have $h(\DD)\subset \thickvar{\what\LL}{K\what\sigma_1\vee\what\beta\what\sigma_2}\cap \what B$. Then \eqref{friendlyred1b} follows from \eqref{mulambdaasymp}, and \eqref{friendlyred1a} is reduced to
\begin{equation}
\label{friendlyred3}
\mu\big(\thickvar\LL{\beta \sigma}\cap h(\DD)\big) \lesssim_\times (e^{\|h\|}\what\beta(\what\sigma_1\vee\what\sigma_2))^{2\alpha} \lambda_\eta\big(h(\DD)\big).
\end{equation}
Applying \eqref{hDprime}, \eqref{GMVT}, and \eqref{pattersonsullivan} reduces us to proving
\begin{equation}
\label{friendlyred4}
\mu\big(\thickvar{h^{-1}(\LL)}{K_2 e^{\|h\|} \beta \sigma}\cap \DD\big) \lesssim_\times (e^{\|h\|}\what\beta(\what\sigma_1\vee\what\sigma_2))^{2\alpha} \lambda_\eta\big(\DD\big),
\end{equation}
where $K_2 > 1$ is the implied constant of \eqref{hDprime}. But this follows from \eqref{globaldecay} and \eqref{sigma12}.
\end{proof}

\begin{reduction}
In the proof of Lemma \ref{lemmaGFfriendlyleb}, we can without loss of generality assume that $\PP$ is a plane (rather than a sphere) and that $\eta = \0$, and we can work in the Euclidean metric rather than the spherical metric.
\end{reduction}
\begin{proof}
Similar to the proof of Reduction \ref{reductionsphtoeuc}.
\end{proof}

\begin{proof}[Proof of Lemma \ref{lemmaGFfriendlyleb}]
It suffices to prove
\begin{align} \label{GFfriendly1v2}
\int_{B\cap\PP} \left(\frac{\sigma_1}{\Dist^2(\0,\yy)}\right)^\alpha \;\dee\lambda_\0(\yy) &\lesssim_\times \lambda_\0(B)\\ \label{GFfriendly2v2}
\lambda_\0\big(\thickvar\LL{2K\beta^{1/2}\sigma_2}\cap B\big) &\lesssim_\times (2K\beta^{1/2})^\alpha \lambda_\0(B)
\end{align}
since the former implies \eqref{GFfriendly1} if $\sigma_1 \geq \beta^{1/2}\sigma_2$, and the latter implies \eqref{GFfriendly2} if $\sigma_1 \leq \beta^{1/2}\sigma_2$. Using the asymptotic
\[
\lambda_\0 \given B(\0,1) \asymp_\times \sum_{n\in\N} 2^{n(2k - 2\delta)} \lambda_\PP^{(\euc)}\given B(\0,2^{-n})\butnot B(\0,2^{-(n + 1)}),
\]
one can show that it suffices to consider the case where $\Dist(\0,B) \geq \rho$. Here $\lambda_\PP^{(\euc)}$ is the Euclidean Lebesgue measure on $\PP$. Now if $\Dist(\0,B) \geq \rho$, then $\Dist(\0,\yy) \asymp_\times \Dist(\0,B) \asymp_\times \sigma_1^{1/2}$ for all $\yy\in B$, which implies \eqref{GFfriendly1v2}.
\end{proof}

\begin{lemma}
\label{lemmalebesguefriendly}
If $\lambda$ is Lebesgue measure on a $k$-dimensional subspace $\PP\subset\R^d$, $\LL\in\Sph$, and $B(\xi,\rho)\subset\PP$, then
\begin{equation}
\label{lebesguefriendly}
\lambda\big(\thickvar\LL{\beta\sigma}\cap B(\xi,\rho)\big) \lesssim_\times \beta^\alpha \lambda\big(B(\xi,\rho)\big),
\end{equation}
where $\sigma = \|d_\LL\|_{B(\xi,\rho)\cap\PP}$, and $\alpha > 0$ is a uniform constant.
\end{lemma}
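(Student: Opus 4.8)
The statement is a ``$(C,\alpha)$-good''-type estimate for the function $\yy\mapsto\dist(\yy,\LL)$ on $\PP$, uniform over $\LL\in\Sph$. First I would discard trivialities: we may assume $\beta\le 1/2$ (for $\beta$ bounded below, \eqref{lebesguefriendly} holds with any $\alpha\le 1$) and $\PP\not\subseteq\LL$ (otherwise $\sigma=0$; in the application this cannot occur, since $\Lambda\subseteq\PP\subseteq\LL$ would contradict irreducibility of $G$). Write $B=B(\xi,\rho)$ and $\sigma=\sup_{\yy\in B}\dist(\yy,\LL)$. The plan is then to split on the shape of $\LL$, all implied constants depending only on $d$ (hence on $k\le d$).

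If $\LL$ is an affine hyperplane, then $\dist(\cdot,\LL)|_\PP=|\ell|$ for an affine $\ell$ on $\PP$ with $\tau:=\|\nabla\ell\|>0$. The oscillation of $\ell$ on $B$ equals $2\tau\rho$, so $\sigma\ge\tau\rho$; and if $\{|\ell|\le\beta\sigma\}\cap B\neq\emptyset$, then evaluating $\ell$ at a point of that set gives $\sigma\le\beta\sigma+2\tau\rho$, hence $\sigma\le 4\tau\rho$. Thus $\{|\ell|\le\beta\sigma\}\cap B$ lies in a slab of Euclidean width $\le 8\beta\rho$ meeting a $k$-ball of radius $\rho$, and Fubini gives $\lambda(\cdot)\lesssim_\times\beta\,\lambda(B)$ --- exponent $1$.

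If $\LL=\del B(\yy_0,r)$ is a sphere, I would write
\[
\dist(\yy,\LL)=\big|\,\|\yy-\yy_0\|-r\,\big|=\frac{|P(\yy)|}{\psi(\yy)},\qquad P(\yy)=\|\yy-\yy_0\|^2-r^2,\quad\psi(\yy)=\|\yy-\yy_0\|+r,
\]
where $P$ has degree $\le 2$ on $\PP\cong\R^k$, and dichotomize on whether $\inf_B\psi\ge\rho$. If so, then $\psi$ is $1$-Lipschitz with large infimum, so $\psi\asymp_\times\inf_B\psi$ on $B$ and hence $\dist(\cdot,\LL)\asymp_\times|P|/\inf_B\psi$ on $B$; consequently $\{\dist(\cdot,\LL)\le\beta\sigma\}\cap B\subseteq\{|P|\lesssim_\times\beta\sup_B|P|\}\cap B$, and the bound follows from the standard fact that polynomials of degree $\le 2$ on $\R^k$ are ``$(C,\alpha_0)$-good'' for Lebesgue measure (i.e. $\lambda(\{|P|<\epsilon\sup_B|P|\}\cap B)\lesssim_\times\epsilon^{\alpha_0}\lambda(B)$) with $\alpha_0=\alpha_0(d)>0$ (Kleinbock--Margulis \cite{KleinbockMargulis}). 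If instead $\inf_B\psi<\rho$, then necessarily $r<\rho$, $a:=\dist(\yy_0,\PP)<\rho$, and $p:=\mathrm{proj}_\PP\yy_0$ lies within $\lesssim_\times\rho$ of $\xi$; so $\sigma\lesssim_\times\rho$ and $\|\yy-p\|\lesssim_\times\rho$ on $B$. Using $\|\yy-\yy_0\|^2=\|\yy-p\|^2+a^2$, the set $\{\dist(\cdot,\LL)\le\beta\sigma\}\cap B$ forces $\|\yy-p\|^2$ into an interval of length $\le 4r\beta\sigma\lesssim_\times\beta\rho^2$, hence $\|\yy-p\|$ into an interval of length $\lesssim_\times\sqrt\beta\,\rho$; a coarea estimate in the variable $\|\yy-p\|$ --- using that each concentric $(k-1)$-sphere meets $B$ in a set of $(k-1)$-dimensional measure $\lesssim_\times\rho^{k-1}$ --- then bounds $\lambda(\cdot)$ by $\lesssim_\times\sqrt\beta\,\rho^k\asymp_\times\beta^{1/2}\lambda(B)$ --- exponent $1/2$. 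Taking $\alpha=\min(1,\alpha_0,1/2)$ completes the proof.

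The main obstacle is precisely that $\sigma$ is a supremum over $B$: for a large sphere nearly tangent to $B$, naive bounds on the shell $\{\dist(\cdot,\LL)\le\beta\sigma\}$ lose a factor comparable to $r\gg\rho$. The point to get right is that this regime lies in the first subcase, where $\psi\asymp_\times r$ is automatically large and the estimate reduces to polynomial goodness, whereas in the second subcase the smallness of $r$ forces $\sigma\lesssim_\times\rho$ and absorbs the bad factor. Verifying that the two subcases genuinely exhaust all positions of $\LL$ relative to $B$, with constants uniform in $\LL$, is the crux; the remaining steps are routine Fubini and one-variable estimates.
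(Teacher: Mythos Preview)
Your argument is correct, but it takes a different route from the paper. The paper's proof is a two-line reduction: after normalizing so that $\|\xi\|,\rho\le 1$, one passes to the spherical metric and observes that under stereographic projection $\PP$ becomes a nondegenerate submanifold of $S^d\subset\R^{d+1}$ and every $\LL\in\Sph$ becomes the trace of an affine hyperplane of $\R^{d+1}$; the estimate \eqref{lebesguefriendly} is then exactly the friendliness (decaying + nonplanar) of the pushforward of Lebesgue measure under a nondegenerate embedding, which is \cite[Theorem 2.1]{KLW}. Your approach is instead a direct case split on the shape of $\LL$, handling hyperplanes by a slab estimate and spheres via the factorization $\dist(\cdot,\LL)=|P|/\psi$ with $P$ quadratic, then dichotomizing on whether the sphere is ``large'' relative to $B$ (reduce to polynomial $(C,\alpha)$-goodness from \cite{KleinbockMargulis}) or ``small'' (annulus/coarea estimate). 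Both ultimately rest on the same Kleinbock--Margulis machinery, but the paper's route is shorter and conceptually unified (one conformal change of variable absorbs the hyperplane/sphere dichotomy), while yours is more self-contained and yields an explicit exponent $\alpha=\min(\alpha_0,1/2)$ without invoking the full nondegenerate-manifold theorem. One minor quibble: in your small-sphere case the bound ``length $\le 4r\beta\sigma$'' on the range of $\|\yy-\yy_0\|^2$ tacitly assumes $r\ge\beta\sigma$; when $r<\beta\sigma$ the range is $[0,(r+\beta\sigma)^2]$, but its length is still $\lesssim_\times\beta\rho^2$, so the conclusion stands.
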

\begin{proof}
Without loss of generality suppose that $\|\xi\|,\rho \leq 1$. Then we can use the spherical metric instead of the Euclidean metric. Using the spherical metric, \eqref{lebesguefriendly} is just the assertion that the image of Lebesgue measure under stereographic projection onto the sphere is friendly. But this follows from \cite[Theorem 2.1]{KLW}.
\end{proof}

\draftnewpage
\section{Gibbs states of CIFSes}
\label{sectiongibbs}

We will prove Theorem \ref{theoremgibbs} by first proving a general theorem about measures which come from ``coding maps'' (Theorem \ref{theoremIFSgeneral}). This general theorem will be useful in later papers in this series, where we will use it to deduce that certain random measures are quasi-decaying.

\subsection{A general theorem}
Before stating the general theorem, we state a lemma which essentially says that when a measure is defined as the image of another measure, then the quasi-decaying condition can be checked on the level of the original measure rather than on the level of the image measure:

\begin{lemma}
\label{lemmaQDETS}
Let $(X,\mu)$ be a measure space, and let $\pi:X\to\R^d$ be a measurable map. Suppose that there exists a sequence of sets $(\borel_n)_1^\infty$ in $X$ such that $\mu(X\butnot\bigcup_n \borel_n) = 0$ and for all $n\in\N$, for $\mu$-a.e. $x\in \borel_n$, for all $\gamma > 0$, there exist $C_1,\alpha > 0$ such that for all $0 < \rho \leq 1$, $0 < \beta \leq \rho^\gamma$, and $\LL\in\Hyp$, if $B = B(\pi(x),\rho)$ then
\begin{equation}
\label{QDETS}
\mu\big(\pi^{-1}(\NN(\LL,\beta\rho)\cap B)\cap \borel_n\big) \leq C_1 \beta^\alpha \mu\big(\pi^{-1}(B)\big).
\end{equation}
Then $\wbar\mu := \pi_*[\mu]$ is quasi-decaying.
\end{lemma}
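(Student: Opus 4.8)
The plan is to show that the hypothesis \eqref{QDETS} on $\mu$ together with a standard Lebesgue-type differentiation argument on the image measure $\wbar\mu$ yields the quasi-decay condition of Definition \ref{definitionQD} for $\wbar\mu$. The essential point is that the family $(\borel_\Index)_\Index$ pushes forward to a family of sets that exhausts $\wbar\mu$ up to a nullset, and that for $\wbar\mu$-typical points in each such image set, the density of $\pi_*[\mu\given\borel_\Index]$ with respect to $\wbar\mu$ is positive. First I would fix $n$ and set $F_n := \pi(\borel_n)$ (measurable up to a nullset, or pass to a Borel hull); since $\wbar\mu(\R^d\butnot\bigcup_n F_n) \le \wbar\mu(\pi(X\butnot\bigcup_n\borel_n)) + \text{(nullset)} = 0$, it suffices by Definition \ref{definitionQD} to prove that $\wbar\mu$ is quasi-decaying relative to each $F_n$.

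Next I would translate \eqref{QDETS} into a statement about $\wbar\mu$. Note that $\mu(\pi^{-1}(B)) = \wbar\mu(B)$ and $\mu(\pi^{-1}(\NN(\LL,\beta\rho)\cap B)\cap\borel_n) = \pi_*[\mu\given\borel_n](\NN(\LL,\beta\rho)\cap B)$. Writing $\nu_n := \pi_*[\mu\given\borel_n]$, the hypothesis says: for $\wbar\mu$-a.e.\ $\yy = \pi(x)$ with $x\in\borel_n$, for all $\gamma>0$ there exist $C_1,\alpha>0$ with
\[
\nu_n\big(\NN(\LL,\beta\rho)\cap B(\yy,\rho)\big) \le C_1\beta^\alpha\wbar\mu\big(B(\yy,\rho)\big)
\]
for all $0<\rho\le 1$, $0<\beta\le\rho^\gamma$, $\LL\in\Hyp$. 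Since $\nu_n \le \wbar\mu$, the measure $\nu_n$ is absolutely continuous with respect to $\wbar\mu$; let $f_n = \dee\nu_n/\dee\wbar\mu$. Because $\nu_n(\R^d\butnot F_n) = 0$ and the set where $f_n > 0$ is, up to a $\wbar\mu$-nullset, contained in $F_n$, the key fact I need is that for $\wbar\mu$-a.e.\ $\yy\in F_n$ one has $f_n(\yy) > 0$ and, by the Lebesgue differentiation theorem [Part I, Theorem 3.6] (see also \cite[Theorem 9.1]{Simmons1}), $\lim_{\rho\searrow 0}\nu_n(B(\yy,\rho))/\wbar\mu(B(\yy,\rho)) = f_n(\yy)$, so in particular $\wbar\mu(B(\yy,\rho)) \asymp_\times \nu_n(B(\yy,\rho))$ for all sufficiently small $\rho$, with implied constant depending on $\yy$.

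With this in hand the argument closes: fix a $\wbar\mu$-typical $\yy\in F_n$, choose (using the exhaustion) some $x\in\borel_n$ with $\pi(x) = \yy$ for which \eqref{QDETS} holds and for which the density estimate above holds, fix $\gamma>0$, and obtain $C_1,\alpha$ from \eqref{QDETS}. For $\rho$ below the threshold coming from the differentiation statement and $0<\beta\le\rho^\gamma$, we get
\[
\wbar\mu\big(\NN(\LL,\beta\rho)\cap B(\yy,\rho)\cap F_n\big) \le \wbar\mu\big(\NN(\LL,\beta\rho)\cap B(\yy,\rho)\big),
\]
but this last quantity need not be controlled directly --- so instead one should estimate $\wbar\mu(\NN(\LL,\beta\rho)\cap B(\yy,\rho)\cap F_n)$ by $\nu_n(\NN(\LL,\beta\rho)\cap B(\yy,\rho))$ plus a term accounting for the discrepancy between $\wbar\mu\given F_n$ and $\nu_n$; using again that $f_n$ is bounded below near $\yy$ one absorbs this, getting $\wbar\mu(\NN(\LL,\beta\rho)\cap B(\yy,\rho)\cap F_n) \lesssim_\times \nu_n(\NN(\LL,\beta\rho)\cap B(\yy,\rho)) \le C_1\beta^\alpha\wbar\mu(B(\yy,\rho))$, which is \eqref{QDwithE} with $\borel = F_n$ after adjusting $C_1$. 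For the finitely many larger $\rho$ up to $1$, a separate crude bound (e.g.\ replace $\beta$ by $1$, using $\rho\ge$ threshold so $\beta\le\rho^\gamma$ forces $\beta$ bounded below by a power of the threshold) handles the tail. The main obstacle is the bookkeeping in this last step: one must be careful that $\wbar\mu\given F_n$ and $\nu_n$ differ, and that the comparison constant $\asymp_\times$ depends on the typical point $\yy$, not uniformly — but since Definition \ref{definitionQD} allows $C_1,\alpha$ to depend on $\yy$, this is harmless. Everything else is a routine application of absolute continuity and the Lebesgue differentiation theorem.
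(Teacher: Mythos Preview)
Your overall strategy --- push forward, form $\nu_n = \pi_*[\mu\given\borel_n]$, take the Radon--Nikodym derivative $f_n = \dee\nu_n/\dee\wbar\mu$ --- matches the paper. But there is a genuine gap in the step where you pass from $\nu_n$ back to $\wbar\mu$ on the thickened-hyperplane set.

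You claim that ``$f_n$ is bounded below near $\yy$'' and use this to get $\wbar\mu(\NN(\LL,\beta\rho)\cap B\cap F_n) \lesssim_\times \nu_n(\NN(\LL,\beta\rho)\cap B)$. The Lebesgue differentiation theorem does \emph{not} give this: it tells you that the ratio $\nu_n(B(\yy,\rho))/\wbar\mu(B(\yy,\rho))$ converges to $f_n(\yy)$, which is a statement about \emph{balls}, not about arbitrary subsets such as $\NN(\LL,\beta\rho)\cap B$. There is no reason $f_n$ should be bounded below on a neighborhood of $\yy$ just because $f_n(\yy)>0$. Moreover, with your choice $F_n=\pi(\borel_n)$ it need not even be true that $f_n>0$ $\wbar\mu$-a.e.\ on $F_n$: take $X=[0,1]^2$ with Lebesgue measure, $\pi(x,y)=x$, $C\subset[0,1]$ a fat Cantor set, and $\borel_1 = \big(([0,1]\setminus C)\times[0,1]\big)\cup\big(C\times\{0\}\big)$; then $\pi(\borel_1)=[0,1]$ but $f_1=0$ on $C$, which has positive $\wbar\mu$-measure.

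The paper's fix is short and avoids Lebesgue differentiation altogether: first replace $(\borel_n)$ by an increasing sequence, so that $f_n\nearrow 1$ $\wbar\mu$-a.e., and then set $F_n := \{\yy : f_n(\yy)\geq 1/2\}$. This choice makes the sets $F_n$ exhaust $\R^d$ up to a $\wbar\mu$-nullset, and on $F_n$ one has the \emph{pointwise} inequality $1\leq 2f_n$, hence $\wbar\mu(S\cap F_n)\leq 2\nu_n(S)$ for \emph{every} Borel $S$. The estimate then reads
\[
\wbar\mu\big(\NN(\LL,\beta\rho)\cap B\cap F_n\big)\leq 2\,\nu_n\big(\NN(\LL,\beta\rho)\cap B\big)\leq 2C_1\beta^\alpha\wbar\mu(B),
\]
with no appeal to differentiation and no restriction to small $\rho$.
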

\begin{proof}
Without loss of generality assume that the sequence $(E_n)_1^\infty$ is increasing. For each $n$, define the measure $\nu_n$ on $\R^d$ via the formula
\[
\nu_n(S) = \mu\big(\pi^{-1}(S)\cap \borel_n\big),
\]
i.e. $\nu_n = \pi_*[\mu\given \borel_n]$. Then $\nu_n \nearrow \wbar\mu$. Let $f_n$ denote the Radon--Nikodym derivative $\frac{\dee\nu_n}{\dee\wbar\mu}$, so that $f_n\nearrow 1$ $\wbar\mu$-a.e. We may choose $f_n$ so that $f_n = 0$ on $\R^d\butnot \pi(\borel_n)$. Finally, let $F_n = \{\yy\in \R^d : f_n(\yy) \geq 1/2\}$, so that $\wbar\mu(\R^d \butnot \bigcup_n F_n) = 0$. For each $n$, we can see that $\wbar\mu$ is quasi-decaying relative to $F_n$ as follows: given $\yy \in F_n$ and $\gamma > 0$, choose $x\in \borel_n$ so that $\pi(x) = \yy$, and let $C_1,\alpha > 0$ be as in the hypothesis of the lemma. Then for all $0 < \rho \leq 1$, $0 < \beta \leq \rho^\gamma$, and $\LL\in\Hyp$, if $B = B(\yy,\rho)$ then
\begin{align*}
\wbar\mu\big(\NN(\LL,\beta\rho)\cap B\cap F_n\big)
&\leq 2 \nu_n\big(\NN(\LL,\beta\rho)\cap B\big)\\
&= 2 \mu\big(\pi^{-1}(\NN(\LL,\beta\rho)\cap B)\cap \borel_n\big)\\
&\leq 2 C_1 \beta^\alpha \mu\big(\pi^{-1}(B)\big)
= 2 C_1 \beta^\alpha \wbar\mu(B).
\qedhere\end{align*}
\end{proof}
\begin{notation}
In the sequel we will not distinguish between the sets $S$ and $\pi^{-1}(S)$, so for example formula \eqref{QDETS} would be written
\[
\mu\big(\NN(\LL,\beta\rho)\cap B\cap \borel_n\big) \leq C_1 \beta^\alpha \mu(B).
\]
\end{notation}

We now state our general theorem about measures coming from ``geometrically nice'' coding maps.

\begin{theorem}
\label{theoremIFSgeneral}
Let $\alphabet$ be a measurable space, let $\pi:\alphabet^\N\to\R^d$ be a measurable map, and let $\mu$ be a probability measure on $\alphabet^\N$. For each $\omega\in \alphabet^*$, we let $\mu_\omega$ denote the conditional measure of $\mu$ on the cylinder $[\omega] = \{\tau\in \alphabet^\N : \tau\given |\omega| = \omega\}$ (normalized to be a probability measure), and we fix a real number $D_\omega \geq \Diam(\Supp(\mu_\omega))$. Assume that $D_\tau \leq D_\omega$ whenever $\tau$ extends $\omega$. Fix $\kappa > 0$, $r\in\N$, and a set
\begin{equation}
\label{Gkrdef}
G \subset \{\omega\in \alphabet^* : \mu_\omega(\{\tau\in \alphabet^\N : \Supp(\mu_{\tau\given |\omega| + r})\cap \NN(\LL,\kappa D_\omega) = \emptyset\}) \geq \kappa \all \LL\in\Hyp\}.
\end{equation}
Assume that for $\mu$-a.e. $\omega\in \alphabet^\N$, the limits
\begin{align} \label{klimit}
&\lim_{n\to\infty} \frac1n \#\{i = 1,\ldots,n : \omega\given i \in G\}\\ \label{dlimit}
&\lim_{n\to\infty} \frac1n \log(1/D_{\omega\given n})
\end{align}
exist and are positive. Then $\pi_*[\mu]$ is quasi-decaying.
\end{theorem}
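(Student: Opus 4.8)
The plan is to invoke Lemma~\ref{lemmaQDETS}: it suffices to produce sets $\borel_n\subseteq\alphabet^\N$ with $\mu\big(\alphabet^\N\butnot\bigcup_n\borel_n\big)=0$ such that for each $n$, for $\mu$-a.e.\ $\omega\in\borel_n$ and every $\gamma>0$ there are $C_1,\alpha>0$ with $\mu\big(\thickvar\LL{\beta\rho}\cap B\cap\borel_n\big)\leq C_1\beta^\alpha\mu(B)$ whenever $B=B(\pi(\omega),\rho)$, $0<\rho\leq1$, $0<\beta\leq\rho^\gamma$, $\LL\in\Hyp$ (using the identification convention of Lemma~\ref{lemmaQDETS} between subsets of $\R^d$ and their $\pi$-preimages). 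I would take the $\borel_n$ to be an enumeration of the sets
\[
\borel_{n,p_0,\chi_0,\chi_1}=\Big\{\omega:\forall m\geq n,\ \#\{i\leq m:\omega\given i\in G\}\geq p_0 m \text{ and } \chi_0 m\leq\log(1/D_{\omega\given m})\leq\chi_1 m\Big\}
\]
over $n\in\N$ and rationals $0<p_0<1$, $0<\chi_0<\chi_1$; since the limits in~\eqref{klimit},~\eqref{dlimit} exist and are positive $\mu$-a.e., these countably many sets cover $\mu$-a.e.\ point. Fix one such set $\borel=\borel_{n,p_0,\chi_0,\chi_1}$ and a $\mu$-typical $\omega\in\borel$ (typical also for the quasi-Federer lemma applied to $\pi_*[\mu]$ at $\pi(\omega)$).

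The key simplification is to prove the required estimate for the single exponent $\gamma_0:=\chi_1/(p_0\chi_0)$ and then boost it to all $\gamma>0$: given $\rho$ and $\beta\leq\rho^\gamma$ with $\beta>\rho^{\gamma_0}$, cover $\thickvar\LL{\beta\rho}\cap B$ by finitely-overlapping balls of radius $\rho':=(\beta\rho)^{1/(1+\gamma_0)}$, so that on each such ball the neighbourhood has the form $\thickvar\LL{\beta'\rho'}$ with $\beta'=(\rho')^{\gamma_0}$, centre the balls at a full-measure (hence dense) set of points $\pi(\omega_j)$, apply the $\gamma_0$-estimate at each centre, and sum using the quasi-Federer lemma [Part I, Lemma~3.2]; the stray powers of $\rho$ that appear are absorbed since $\beta\leq\rho^\gamma$. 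So fix $\rho\leq 1$, $\beta\leq\rho^{\gamma_0}$, $\LL\in\Hyp$, write $B=B(\pi(\omega),\rho)$, and set $n_0:=\max\big(n,\lceil\log(3/\rho)/\chi_0\rceil\big)$, so that $n_0\geq n$ and $D_{\omega\given n_0}\leq\rho/3$. Then every level-$n_0$ cylinder $[\tau]$ with $[\tau]\cap\borel\neq\emptyset$ has $D_\tau\leq\rho/3$, so if it meets $B$ then $\Supp(\mu_\tau)\subseteq B(\pi(\omega),3\rho)$; these cylinders being disjoint, $\sum_\tau\mu([\tau])\leq\mu\big(B(\pi(\omega),3\rho)\big)\lesssim_\times\mu(B)$ up to a quasi-Federer factor. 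Hence it suffices to prove the per-cylinder bound $\mu_\tau\big(\thickvar\LL{\beta\rho}\cap\borel\big)\lesssim_\times\beta^\alpha$, with $\alpha>0$ and the implied constant depending only on $\borel$.

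The per-cylinder bound is proved by a telescoping/stopping-time argument along generations inside $[\tau]$. For $\tau'$ extending $\tau$ let $X_m(\tau')=1$ if $\Supp(\mu_{\tau'\given m})\cap\thickvar\LL{\beta\rho}\neq\emptyset$ and $0$ otherwise; this depends only on $\tau'\given m$, is non-increasing in $m$ (supports shrink), and $\{X_m=1\text{ for all }m\}$ contains $\mu$-a.e.\ point of $\thickvar\LL{\beta\rho}$. The defining property~\eqref{Gkrdef} of $G$ gives the contraction: if $\tau'\given g\in G$ and $D_{\tau'\given g}\geq\beta\rho/\kappa$ (so that $\thickvar\LL{\beta\rho}\subseteq\thickvar\LL{\kappa D_{\tau'\given g}}$), then
\[
\mu_{\tau'\given g}\big(\{\tau'':\Supp(\mu_{\tau''\given g+r})\cap\thickvar\LL{\beta\rho}\neq\emptyset\}\big)\leq 1-\kappa.
\]
On $\borel$ every path has at least $p_0 m-n_0$ good generations in $[n_0,m]$, and a direct computation using $\chi_0 m\leq\log(1/D_{\cdot\given m})\leq\chi_1 m$ and the choice $\gamma_0=\chi_1/(p_0\chi_0)$ shows that, for $\beta\leq\rho^{\gamma_0}$, there are at least $c\log(1/\beta)$ generations $g$ in the band $\big[n_0,\,\log(\kappa/(\beta\rho))/\chi_1\big]$ that are good and satisfy $D_{\cdot\given g}\geq\beta\rho/\kappa$, for some $c=c(\borel)>0$; passing to an $r$-separated subfamily costs only a factor $r$. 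Iterating the contraction at these generations — via a stopping time, so the chosen generations adapt to each path $\tau'\in\borel$ — gives $\mu_\tau\big(\thickvar\LL{\beta\rho}\cap\borel\big)\lesssim_\times(1-\kappa)^{(c/r)\log(1/\beta)}\lesssim_\times\beta^\alpha$ with $\alpha=-(c/r)\log(1-\kappa)>0$ (the case where $\log(1/\beta)$ is bounded in terms of $\borel$ being trivial).

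The main obstacle is the coordination, in the last paragraph, of two unrelated ``clocks'' — the generation index governed by~\eqref{klimit} and the logarithmic scale $\log(1/D_{\omega\given m})$ governed by~\eqref{dlimit} — so as to guarantee that the band of scales $[\beta\rho,\rho]$ in which decay is required always contains enough good generations to drive the geometric contraction; this is exactly what dictates the value of $\gamma_0$ and forces the subsequent boosting step. Secondary technical points are: cylinders straddling $\del B$, handled by the covering plus quasi-Federer argument above; the range $\rho\gtrsim_\times 1$, absorbed into the choice $n_0=\max\big(n,\lceil\log(3/\rho)/\chi_0\rceil\big)$ since $\rho$ is then bounded below; and the measure-theoretic care needed to make the conditional-measure telescoping rigorous, for which the Lebesgue differentiation theorem [Part I, Theorem~3.6] together with the standard martingale properties of conditional measures suffice.
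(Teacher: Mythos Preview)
Your core strategy matches the paper's: localize to cylinders of diameter $\sim\rho$, prove a $(1-\kappa)^k$ decay by iterating the contraction at each good generation in the relevant scale band (this is the paper's Claim~4.3, your stopping-time argument), then count good generations using the hypotheses~\eqref{klimit}--\eqref{dlimit}, and finish with Lemma~\ref{lemmaQDETS} and the quasi-Federer lemma.

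The gap is in your handling of small $\gamma$. Your sets $\borel_{n,p_0,\chi_0,\chi_1}$ record only a \emph{lower} bound $p_0 m$ on the number of good generations up to level $m$. Hence in the band $[n_0,m_1]$ you can only say $\#\{\text{good}\}\geq p_0 m_1 - n_0$, using the trivial bound $n_0$ on $[1,n_0]$. Writing $L_\rho=\log(1/\rho)$, $L_\beta=\log(1/\beta)$, this gives roughly $(p_0/\chi_1)L_\beta-(1/\chi_0-p_0/\chi_1)L_\rho$, which is positive only when $L_\beta/L_\rho$ exceeds $\chi_1/(p_0\chi_0)-1>0$; so your per-cylinder estimate only works for $\gamma$ above a fixed positive threshold. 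Your boosting step does not repair this: when $\rho^{\gamma_0}<\beta\leq\rho^\gamma$, your proposed radius $\rho'=(\beta\rho)^{1/(1+\gamma_0)}$ satisfies $\rho'>\rho$, so there is no ``covering by smaller balls''; and the quasi-Federer lemma does not compare $\mu(B(x,\rho'))$ with $\mu(B(x,\rho))$ when $\rho'/\rho$ is a genuine power of $1/\rho$.

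The paper avoids this by taking the sets $\borel$ to be Egoroff sets on which the limits~\eqref{klimit}--\eqref{dlimit} converge \emph{uniformly}. This yields two-sided bounds $|\,\#\{i\leq m:\text{good}\}/m-g\,|<\epsilon$ and $|\,m^{-1}\log(1/D_{\cdot|m})-\chi\,|<\epsilon$, with $\epsilon$ chosen (depending on $\gamma$) after the set is fixed. The count of good generations in the band then becomes $\geq (g-\epsilon)m_1-(g+\epsilon)n_0\approx(g/\chi)L_\beta - O(\epsilon)L_\rho$, which for any $\gamma>0$ is $\gtrsim L_\beta$ once $\epsilon$ is small enough. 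This is the content of the paper's Claim~4.4, and it handles all $\gamma$ directly with no boosting. Replacing your quantitative sets by Egoroff sets (and recording the two-sided density bound) fixes your argument.
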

We will prove the theorem for the case $r = 1$; the general case follows by replacing $\alphabet$ by $\alphabet^r$.

The basic idea of the proof is as follows: Suppose that we are given a $\mu$-random word $\tau\in \alphabet^\N$, of which we know an initial segment $\omega = \tau\given n$. We want to give an upper bound on the probability that $\pi(\tau)\in \thickvar\LL{\kappa\rho}$, where $\LL\in\Hyp$ and $\rho > 0$. Now reveal the letters of $\tau$ in order. Each time a letter is revealed, there is a chance that the letter proves that $\tau\notin \thickvar\LL{\kappa\rho}$. More precisely, if we are revealing the $(i + 1)$st letter, and if $\tau\given i \in G$ and $D_{\tau\given i} \geq \rho$, then the probability that $\Supp(\mu_{\tau\given i + 1})$ (which contains $\tau$) is disjoint from $\thickvar\LL{\kappa\rho}$ is at least $\kappa$. So the probability that $\tau\in \thickvar\LL{\kappa\rho}$ is bounded above by an expression like $(1 - \kappa)^k$, where $k$ is the number of $i\geq n$ such that $\tau\given i \in G$ and $D_{\tau\given i} \geq \rho$. The hypotheses \eqref{klimit}-\eqref{dlimit} can be used to give a lower bound on $k$, which in turn gives an upper bound on probability which depends only on $\omega$ and $\rho$.

We now proceed to make this idea rigorous:
\begin{proof}[Proof of Theorem \ref{theoremIFSgeneral}]
For each $n\in\N$, $\rho > 0$, and $k\in\N$ let
\[
E(n,\rho,k) = \{\tau \in \alphabet^\N : \#\{i\geq n : D_{\tau\given i} \geq \rho, \; \tau\given i \in G\} \geq k\}.
\]

\begin{claim}
\label{claimmuomegainduction}
For all $\omega\in \alphabet^*$, $\LL\in\Hyp$, $\rho > 0$, and $k\in\N$,
\begin{equation}
\label{muomegainduction}
\mu_\omega\big(\NN(\LL,\kappa\rho)\cap E(|\omega|,\rho,k)\big) \leq (1 - \kappa)^k.
\end{equation}
\end{claim}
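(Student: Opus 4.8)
The plan is to prove Claim~\ref{claimmuomegainduction} by induction on $k$, organising the inductive step around the first index at which a ``good'' event occurs. Recall that we have reduced to the case $r=1$, so the defining property \eqref{Gkrdef} of $G$ concerns the measures $\mu_{\upsilon a}$, $a\in\alphabet$, associated to the one-letter extensions of $\upsilon$. Fix $\LL\in\Hyp$ and $\rho>0$ and write $m=|\omega|$. The base case $k=0$ is immediate: $E(m,\rho,0)=\alphabet^\N$, so the left-hand side of \eqref{muomegainduction} is at most $\mu_\omega(\alphabet^\N)=1=(1-\kappa)^0$.

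For the inductive step fix $k\geq1$ and suppose \eqref{muomegainduction} holds for $k-1$ and every word. For $\tau\in[\omega]$ let $i_0(\tau)\in\{m,m+1,\dots\}\cup\{\infty\}$ be the least $i\geq m$ with $D_{\tau\given i}\geq\rho$ and $\tau\given i\in G$; this is a cylindrical stopping time, and $\{i_0=j\}$ is determined by $\tau\given j$. Two facts, both using the hypothesis $D_\tau\le D_\omega$ for extensions (so $i\mapsto D_{\tau\given i}$ is nonincreasing), make the argument run. First, any $\tau\in E(m,\rho,k)$ with $k\geq 1$ has at least one qualifying index, so $E(m,\rho,k)\cap[\omega]\subset\{i_0<\infty\}$. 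Second, on $\{i_0=j\}$ one has $D_{\tau\given i}\geq\rho$ for all $m\le i\le j$, hence the indices $m\le i<j$ contribute nothing to the count in the definition of $E(m,\rho,k)$ while the index $i=j$ contributes exactly one; consequently $E(m,\rho,k)\cap\{i_0=j\}=E(j+1,\rho,k-1)\cap\{i_0=j\}$. Disintegrating $\mu_\omega$ over the first $j$ coordinates and summing over $j$ therefore gives
\[
\mu_\omega\big(\NN(\LL,\kappa\rho)\cap E(m,\rho,k)\big)=\sum_{j\geq m}\int_{\{i_0=j\}}\mu_{\tau\given j}\big(\NN(\LL,\kappa\rho)\cap E(j+1,\rho,k-1)\big)\,\dee\mu_\omega(\tau),
\]
so it suffices to bound $\mu_{\upsilon}\big(\NN(\LL,\kappa\rho)\cap E(|\upsilon|+1,\rho,k-1)\big)\leq(1-\kappa)^k$ for every $\upsilon$ with $\upsilon\in G$ and $D_\upsilon\geq\rho$ (the situation on $\{i_0=j\}$, with $\upsilon=\tau\given j$); summing then yields at most $(1-\kappa)^k\mu_\omega(\{i_0<\infty\})\le(1-\kappa)^k$.

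To prove this bound, disintegrate $\mu_\upsilon$ over its next letter: with $\nu_\upsilon$ the law of the $(|\upsilon|+1)$-st coordinate under $\mu_\upsilon$, we have $\mu_\upsilon(\,\cdot\,)=\int_\alphabet\mu_{\upsilon a}(\,\cdot\,)\,\dee\nu_\upsilon(a)$. Since $\rho\le D_\upsilon$ we have $\NN(\LL,\kappa\rho)\subset\NN(\LL,\kappa D_\upsilon)$, so $\upsilon\in G$ together with \eqref{Gkrdef} (for $r=1$) shows that the set of letters $a$ for which $\Supp(\mu_{\upsilon a})$ is disjoint from $\NN(\LL,\kappa\rho)$ has $\nu_\upsilon$-measure at least $\kappa$. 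For such $a$ the measure $\mu_{\upsilon a}$ gives no mass to $\NN(\LL,\kappa\rho)$; for the remaining letters, which form a set of $\nu_\upsilon$-measure at most $1-\kappa$, the inductive hypothesis applied to the word $\upsilon a$ (noting $E(|\upsilon|+1,\rho,k-1)=E(|\upsilon a|,\rho,k-1)$) bounds $\mu_{\upsilon a}\big(\NN(\LL,\kappa\rho)\cap E(|\upsilon a|,\rho,k-1)\big)$ by $(1-\kappa)^{k-1}$. Integrating over $a$ gives the desired $(1-\kappa)^{k-1}(1-\kappa)=(1-\kappa)^k$, and the induction is complete.

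The main point requiring care is bookkeeping rather than any deep idea: because $\alphabet$ is an abstract measurable space, all the splitting of $\mu_\omega$ into pieces indexed by initial segments must be done via disintegrations / conditional expectations instead of sums over letters, and the monotonicity hypothesis on $D$ must be invoked in the two places indicated — once so that the first-passage index $i_0$ correctly records occurrences, and once so that the neighbourhood $\NN(\LL,\kappa\rho)$ can be enlarged to $\NN(\LL,\kappa D_\upsilon)$ at the stopping word $\upsilon$. Given those, the $\kappa$-gain at each good index is exactly what \eqref{Gkrdef} was designed to supply.
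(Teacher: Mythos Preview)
Your proof is correct. It takes a genuinely different organisational route from the paper's own argument, so a brief comparison is worthwhile.

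The paper inducts not on $k$ but on a ``depth'' parameter --- essentially the number of letters that can still be revealed before $D_{\tau\given n}$ drops below $\rho$ --- and at each step peels off one letter, picking up a factor of $(1-\kappa)$ exactly when the current word lies in $G$. To make this induction well-founded the paper imposes the simplifying assumption $\inf_\omega \log(D_{\omega\given |\omega|-1}/D_\omega)>0$, and remarks that the general case follows from the martingale convergence theorem with the same calculations.

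You instead induct directly on $k$ and use a stopping time: jump straight to the first index $i_0$ at which the word is good and $D\geq\rho$, harvest the factor $(1-\kappa)$ there via the defining property of $G$, and then invoke the inductive hypothesis for $k-1$ on the one-letter extension. This is essentially the optional-stopping version of the paper's step-by-step argument. What it buys you is that no auxiliary assumption on the contraction rate is needed --- the well-foundedness is automatic because $k$ decreases --- so you have in effect carried out the martingale refinement the paper only alludes to. The paper's version has the advantage of being completely elementary (no stopping time bookkeeping), at the cost of the extra hypothesis.
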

\begin{subproof}
For ease of exposition we assume that $c = \inf_{\omega} \log(D_{\omega\given |\omega| - 1}/D_\omega) > 0$, and we proceed by induction on $\lfloor (1/c)\log(D_\omega/\rho)\rfloor$. If $\rho > D_\omega$, then either $E(|\omega|,\rho,k) = \emptyset$ or $k = 0$, and in either case \eqref{muomegainduction} holds trivially. So assume that $\rho \leq D_\omega$. Write $\ell = [\omega\in G]$ (recall Convention \ref{conventioniverson}). We have
\begin{align*}
&\mu_\omega\big(\NN(\LL,\kappa\rho)\cap E(|\omega|,\rho,k)\big)\\
&= \mu_\omega\big(\NN(\LL,\kappa\rho)\cap E(|\omega| + 1,\rho,k - \ell)\big) \since{$\rho \leq D_\omega$}\\
&= \int \mu_{\tau\given |\omega| + 1}\big(\NN(\LL,\kappa\rho)\cap E(|\omega| + 1,\rho,k - \ell)\big)\;\dee \mu_\omega(\tau) \note{conditional measures}\\
&\leq \int (1 - \kappa)^{k - \ell} \big[\Supp(\mu_{\tau\given |\omega| + 1})\cap \NN(\LL,\kappa\rho)\neq\emptyset \big]\;\dee \mu_\omega(\tau) \note{induction hypothesis}\\
&\leq (1 - \kappa)^{k - \ell} \mu_\omega\big(\big\{\tau \in \alphabet^\N : \Supp(\mu_{\tau\given |\omega| + 1})\cap \NN(\LL,\kappa D_\omega)\neq\emptyset\big\}\big) \since{$\rho \leq D_\omega$}\\
&\leq (1 - \kappa)^{k - \ell} (1 - \kappa)^\ell \note{\eqref{Gkrdef} and definition of $\ell$}\\
&= (1 - \kappa)^k
\end{align*}
which completes the induction step.

The lemma can be proven without using the assumption $\inf_{\omega} \log(D_{\omega\given |\omega| - 1}/D_\omega) > 0$ by using the martingale theorem instead of induction; the essential calculations are the same and we omit the details.
\end{subproof}

Now let $\borel\subset \alphabet^\N$ be a set on which the limits \eqref{klimit}-\eqref{dlimit} converge uniformly. Fix $\gamma > 0$, $\xx\in\R^d$, $0 < \rho \leq 1$, $0 < \beta \leq \rho^\gamma$, and $\LL\in\Hyp$, and we will show that
\begin{equation}
\label{QDwithE2}
\mu\big(\thickvar\LL{\beta\rho}\cap B(\xx,\rho)\cap E\big) \lesssim_\times \beta^\alpha \mu\big(B(\xx,2\rho)\big)
\end{equation}
for some $\alpha > 0$ depending only on $\gamma$.

Consider the partition $\AA$ of $\alphabet^\N$ consisting of all cylinders $[\omega]$ ($\omega\in \alphabet^*$) satisfying
\begin{equation}
\label{partitionelement}
D_\omega < \rho \leq D_{\omega\given |\omega| - 1}.
\end{equation}
Fix such an $\omega$, let $n = |\omega|$, and let $k = \lceil \log_\lambda(\beta)\rceil$, where $\lambda\in (0,1)$ is small to be determined.
\begin{claim}
\label{claimomegaE}
If $\beta$ is sufficiently small then
\[
[\omega] \cap E \subset E(|\omega|,\kappa^{-1}\beta\rho,k).
\]
\end{claim}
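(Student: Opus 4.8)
\emph{Proof proposal.} Unwinding the definition of $E(|\omega|,\kappa^{-1}\beta\rho,k)$, the task is to show that every $\tau\in[\omega]\cap E$ admits at least $k$ indices $i\ge n:=|\omega|$ with $\tau\given i\in G$ and $D_{\tau\given i}\ge\kappa^{-1}\beta\rho$. Accordingly, I would first strengthen the choice of the set $E$ (made just before the claim): besides requiring, via Egorov's theorem, that the limits \eqref{klimit} and \eqref{dlimit} converge uniformly on $E$, I would intersect $E$ with pieces on which the limiting values $\kappa_1(\tau)$ (of \eqref{klimit}) and $\chi(\tau)$ (of \eqref{dlimit}) lie in fixed intervals $[\kappa_1^-,\kappa_1^+]$ and $[\chi^-,\chi^+]$ that are bounded away from $0$ and $\infty$; there are countably many such pieces, so this is permitted by Lemma \ref{lemmaQDETS}. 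Having fixed $\gamma$, I would then choose an auxiliary $\epsilon>0$, small relative to $\gamma,\chi^-,\chi^+,\kappa_1^-$, and let $N=N(\epsilon)$ be a threshold beyond which, for all $\tau\in E$ and $i\ge N$, the quantities $\frac1i\#\{j\le i:\tau\given j\in G\}$ and $\frac1i\log(1/D_{\tau\given i})$ differ from $\kappa_1(\tau)$ and $\chi(\tau)$ by at most $\epsilon$.

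The first step is to locate $n$. Since $\omega=\tau\given n$ and $\omega\given(n-1)$ are initial segments of $\tau$ and $D_\omega<\rho\le D_{\omega\given(n-1)}$, the near-linear growth of $i\mapsto\log(1/D_{\tau\given i})$ gives $\log(1/\rho)<(\chi(\tau)+\epsilon)n$ and $(\chi(\tau)-\epsilon)(n-1)\le\log(1/\rho)$, so $n=\chi(\tau)^{-1}\log(1/\rho)$ up to a $1+O(\epsilon)$ factor and an $O(1)$ additive term; in particular $\log(1/D_\omega)\asymp\log(1/\rho)$. The second step is to set $m$ equal to the largest integer with $(\chi(\tau)+\epsilon)m\le\log(1/(\kappa^{-1}\beta\rho))$; then $D_{\tau\given i}\ge\kappa^{-1}\beta\rho$ for every $N\le i\le m$, and $m=\chi(\tau)^{-1}\log(1/(\beta\rho))$ up to the same sort of error. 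Subtracting, $m-n$ equals $\chi(\tau)^{-1}\log(1/\beta)$ up to a $1+O(\epsilon)$ factor plus an error of size $O(\epsilon)\chi(\tau)^{-1}\log(1/\rho)+O(1)$; here the hypothesis $\beta\le\rho^\gamma$, i.e.\ $\log(1/\rho)\le\gamma^{-1}\log(1/\beta)$, is exactly what is needed to swallow the cross term, so that, with $\epsilon$ chosen small enough in terms of $\gamma$ and the interval bounds, $m-n\ge\frac{1}{2\chi^+}\log(1/\beta)-C$.

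The third step is the count. Using uniform convergence of \eqref{klimit} at the scales $m$ and $\max(n,N)$,
\[
\#\{i:\ n<i\le m,\ \tau\given i\in G\}\ \ge\ \kappa_1(\tau)\big(m-\max(n,N)\big)-\epsilon\big(m+\max(n,N)\big).
\]
As $m$ and $n$ are both $O(\gamma^{-1}\log(1/\beta))$, choosing $\epsilon$ small relative to $\gamma$ again renders the subtracted error a small fraction of $m-n$, so that, combined with the second step, $\#\{i:n<i\le m,\ \tau\given i\in G\}\ge c\log(1/\beta)-C'$ with $c>0$ depending only on $\gamma$ and the interval bounds --- uniformly over $\tau\in E$, which is precisely why the limiting values were pinned to fixed intervals. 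Each such index satisfies $i\ge n$, and, lying between $N$ and $m$, also $D_{\tau\given i}\ge\kappa^{-1}\beta\rho$. Since $k=\lceil\log_\lambda\beta\rceil\le\log(1/\beta)/\log(1/\lambda)+1$, I would finally fix $\lambda$ small enough that $1/\log(1/\lambda)\le c/2$ (so $\lambda$ depends on $\gamma$ and on the piece $E$); then $k\le c\log(1/\beta)-C'$ once $\beta$ is small, which gives $[\omega]\cap E\subset E(|\omega|,\kappa^{-1}\beta\rho,k)$. The two degenerate regimes --- $\rho$ bounded below, so possibly $n<N$, and $\beta$ not yet below $\kappa$, so $\kappa^{-1}\beta\rho\ge\rho$ --- are disposed of by the trivial bound $0\le\#\{i<N:\cdots\}\le N$ and are absorbed into ``$\beta$ sufficiently small''.

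The step I expect to be the crux is the control of the cross term $O(\epsilon)\log(1/\rho)$ in $m-n$ (and its analogue in the count): because $\rho$ may be as small as $\beta^{1/\gamma}$, the multiplicative fuzz coming from uniform convergence gets amplified by the factor $1/\gamma$, so it is essential that $\epsilon$ --- and hence the convergence threshold $N$ --- be chosen only after $\gamma$, and correspondingly that $\lambda$, and ultimately the exponent $\alpha$ produced in Theorem \ref{theoremIFSgeneral}, be allowed to depend on $\gamma$ (and on the chosen member of the exhausting sequence). This dependence is harmless, since Lemma \ref{lemmaQDETS} requires only that, for each member of the sequence and $\mu$-a.e.\ point of it, suitable $C_1,\alpha$ exist for each $\gamma$.
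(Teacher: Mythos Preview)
Your proposal is correct and follows essentially the same approach as the paper: locate $n$ via \eqref{partitionelement} and the uniform convergence of \eqref{dlimit}, pick an endpoint (the paper uses $n+\ell$ with $\ell=\lfloor\tfrac12(\log_\rho\beta)\,n\rfloor$, you use $m$ defined directly from the target scale $\kappa^{-1}\beta\rho$), count $G$-indices in between via \eqref{klimit}, and absorb the $O(\epsilon)\log(1/\rho)$ cross term using $\beta\le\rho^\gamma$ before fixing $\lambda$. Your additional step of pinning the limiting values to fixed intervals $[\kappa_1^-,\kappa_1^+]$, $[\chi^-,\chi^+]$ is a genuine refinement---the paper tacitly treats the limits $g,\chi$ as constants (as they are in the ergodic applications), whereas Theorem~\ref{theoremIFSgeneral} as stated does not assume this---so your version is in fact slightly more careful than the original.
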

\begin{subproof}
Fix $\tau\in [\omega] \cap \borel$. Let $\delta = (1/2)\log_\rho(\beta) \geq \gamma/2$, let $\epsilon = (\gamma/12)\wedge(1/5)$, and let $\ell = \lfloor \delta n\rfloor$. By \eqref{partitionelement} and the definition of $\borel$, we have\Footnote{In these asymptotics the implied constant may depend on $\kappa$ and $\gamma$ (and thus on $\epsilon$), but not on other variables such as $\delta$.}
\begin{align} \label{kbounds}
\#\{i = n + 1,\ldots,&n + \ell : \tau\given i \in G\} \gtrsim_\plus g(\delta - \epsilon)n\\ \label{sbound1}
\rho < D_{\tau\given n - 1} &\lesssim_\times \exp\big(-\chi(1 - \epsilon) n\big),\\ \label{sbound2}
D_{\tau\given n + \ell} &\gtrsim_\times \exp\big(-\chi(1 + \delta + \epsilon) n\big) \\ \label{sbound3}
\rho \geq D_{\tau\given n} &\gtrsim_\times \exp\big(-\chi (1 + \epsilon) n\big),
\end{align}
where $g,\chi > 0$ are the limits of \eqref{klimit} and \eqref{dlimit}, respectively. To show that $\tau\in E(|\omega|,\kappa^{-1}\beta\rho,k)$, it suffices to demonstrate the separate claims
\begin{align} \label{ETSrandom1}
D_{\tau\given n + \ell} &\geq \kappa^{-1} \beta\rho \\ \label{ETSrandom2}
\#\{i = n + 1,\ldots,n + \ell : \tau\given i \in G\} &\geq \lceil \log_\lambda(\beta) \rceil
\end{align}
under the assumption that $\beta$ is sufficiently small. To prove \eqref{ETSrandom1}, we observe that
\begin{align*}
\kappa\rho^{-1}D_{\tau\given n + \ell}
&\gtrsim_\times \exp\big(-\chi (\delta + 2\epsilon) n\big) \by{\eqref{sbound1} and \eqref{sbound2}}\\
&\gtrsim_\times \rho^{(\delta + 2\epsilon)/(1 - \epsilon)} \by{\eqref{sbound1}}\\
&\geq_\pt \rho^{(\delta + \gamma/6)/(1 - 1/5)} \geq \rho^{(4/3)\delta/(4/5)} = \beta^{5/6}.\noreason
\end{align*}
This implies that \eqref{ETSrandom1} holds for all sufficiently small $\beta$. To prove \eqref{ETSrandom2}, we observe that
\begin{align*}
&\#\{i = n + 1,\ldots,n + \ell : \tau\given i \in G\}\\
&\gtrsim_\plus g(\delta - \epsilon) n \by{\eqref{kbounds}}\\
&\gtrsim_\plus \frac{g(\delta - \epsilon)}{\chi (1 + \epsilon)} \log(1/\rho) \by{\eqref{sbound3}}\\
&\geq_\pt \frac{g(\delta - \gamma/12)}{\chi(1 + 1/5)} \log(1/\rho)
\geq \frac{(5/6)g}{(6/5)\chi} \delta\log(1/\rho)
> \frac{g}{3\chi}\log(1/\beta). \noreason
\end{align*}
Choosing $\lambda < \exp(-g/3\chi)$ gives \eqref{ETSrandom2} for all sufficiently small $\beta$.
\end{subproof}

Combining Claims \ref{claimmuomegainduction} and \ref{claimomegaE} yields (for $\beta$ sufficiently small)
\[
\mu_\omega\big(\NN(\LL,\beta\rho)\cap \borel\big) \leq (1 - \kappa)^k \leq \beta^\alpha,
\]
where $\alpha = \log_\lambda(1 - \kappa) > 0$. On the other hand, by \eqref{partitionelement} we have $\Diam(\Supp(\mu_\omega)) \leq D_\omega < \rho$, so either $\Supp(\mu_\omega)\cap B(\xx,\rho) = \emptyset$ or $\Supp(\mu_\omega) \subset B(\xx,2\rho)$. Either way we have
\[
\mu_\omega\big(\NN(\LL,\beta\rho)\cap B(\xx,\rho) \cap \borel\big) \leq \beta^\alpha \mu_\omega\big(B(\xx,2\rho)\big)
\]
and integrating over all $\omega$ yields \eqref{QDwithE2}. Since \eqref{QDwithE2} is an asymptotic, it holds for all $\beta$ rather than just for all sufficiently small $\beta$. Now \cite[Lemma 3.2]{DFSU_GE1} shows that for $\pi_*[\mu]$-a.e. $\xx\in\R^d$, we have
\[
\mu\big(\NN(\LL,\beta\rho)\cap B(\xx,\rho) \cap \borel\big) \leq \beta^{\alpha/2} \mu\big(B(\xx,\rho)\big).
\]
Since $\borel$ was arbitrary subject to the condition that \eqref{klimit}-\eqref{dlimit} converge uniformly, Egoroff's theorem and Lemma \ref{lemmaQDETS} show that $\pi_*[\mu]$ is quasi-decaying.
\end{proof}

\subsection{Definitions}
\label{subsectiongibbsdef}
Before proving Theorem \ref{theoremgibbs}, we recall the definition of a conformal iterated function system (CIFS), its coding map, and the Gibbs measures of summable locally H\"older continuous functions.

\begin{definition}[Cf. {\cite[p.6-7]{MauldinUrbanski1}}]
\label{definitionCIFS}
Fix $d\in\N$. A collection of maps $(u_a)_{a\in \alphabet}$ is called a \emph{conformal iterated function system} on $\R^d$ if:
\begin{enumerate}[1.]
\item $\alphabet$ is a countable (finite or infinite) index set;
\item $X\subset\R^d$ is a nonempty compact set which is equal to the closure of its interior;
\item For all $a\in \alphabet$, $u_a(X) \subset X$;
\item (Cone condition)
\[
\inf_{\xx\in X, r\in (0,1)} \frac{\lambda(X\cap B(\xx,r))}{r^d} > 0,
\]
where $\lambda$ denotes Lebesgue measure on $\R^d$;
\item $V\subset\R^d$ is an open connected bounded set such that $\dist(X,\R^d\butnot V) > 0$;
\item For each $a\in \alphabet$, $u_a$ is a conformal homeomorphism from $V$ to an open subset of $V$;
\item (Uniform contraction) $\sup_{a\in \alphabet} \sup |u_a'| < 1$, and if $\alphabet$ is infinite, $\lim_{a\in \alphabet} \sup |u_a'| = 0$;
\item (Bounded distortion property) For all $n\in\N$, $\omega\in \alphabet^n$, and $\xx,\yy\in V$,
\begin{equation}
\label{BD2}
|u_\omega'(\xx)| \asymp_\times |u_\omega'(\yy)|,
\end{equation}
where
\[
u_\omega = u_{\omega_1}\circ\cdots\circ u_{\omega_n}.
\]
\end{enumerate}
The CIFS is said to satisfy the \emph{open set condition} if the collection $(u_a(\Int(X)))_{a\in \alphabet}$ is disjoint. It is said to satisfy the \emph{strong separation condition} if the collection $(u_a(X))_{a\in \alphabet}$ is disjoint.

Finally, the CIFS is said to be \emph{irreducible} if there is no proper real-analytic submanifold $M\subset X$ such that $u_a(M)\subset M$ for all $a\in \alphabet$. (Equivalently, the limit set of the CIFS (defined below) is not contained in any proper real-analytic submanifold of $\R^d$.)
\end{definition}

In the remainder of this section, we fix a CIFS $(u_a)_{a\in \alphabet}$ and corresponding sets $X,V\subset\R^d$.

\begin{definition}
\label{definitionlimitset}
The \emph{coding map} of the CIFS $(u_a)_{a\in \alphabet}$ is the map $\pi:\alphabet^\N\to\R^d$ defined by the formula
\[
\pi(\omega) = \lim_{n\to\infty} u_{\omega\given n}(x_0),
\]
where $x_0\in X$ is an arbitrary point. By the Uniform Contraction hypothesis, $\pi(\omega)$ exists and is independent of the choice of $x_0$. The \emph{limit set} of the CIFS is the image of $\alphabet^\N$ under the coding map, i.e. $K = \pi(\alphabet^\N)$.
\end{definition}

Note that by the Uniform Contraction hypothesis, the coding map is always H\"older continuous, assuming that the metric on $\alphabet^\N$ is given by the formula
\[
\dist(\omega,\tau) = \lambda^{|\omega\wedge\tau|},
\]
where $\lambda\in (0,1)$ and $\omega\wedge\tau$ is the longest word which is an initial segment of both $\omega$ and $\tau$.

\begin{definition}[{\cite[\62.2 and \62.3]{MauldinUrbanski2}}]
\label{definitiongibbs}
A function $\phi:\alphabet^\N\to\R$ is called \emph{locally H\"older continuous} if there exist $C,\alpha > 0$ such that for all $\omega,\tau\in \alphabet^\N$ such that $\omega_1 = \tau_1$,
\[
|\phi(\omega) - \phi(\tau)| \leq C \dist^\alpha(\omega,\tau).
\]
(We note that in \cite{MauldinUrbanski2}, this was just called ``H\"older continuous'', but the terminology ``locally H\"older continuous'' is now standard, to distinguish it from the same definition with the requirement that $\omega_1 = \tau_1$ is removed.)
A locally H\"older continuous function $\phi:\alphabet^\N\to\R$ is called \emph{summable} if
\[
\sum_{a\in \alphabet} \sup_{[a]} e^\phi < \infty.
\]
A measure $\mu$ on $\alphabet^\N$ is said to be a \emph{Gibbs state} for $\phi$ if there exists $P\in\R$ such that for all $\omega\in \alphabet^*$ and $\tau\in [\omega]$,
\begin{equation}
\label{gibbs}
\mu([\omega]) \asymp_\times \exp\left(\sum_{j = 0}^{|\omega - 1|} \phi(\sigma^j \tau) - P|\omega|\right).
\end{equation}
The number $P$ is called the \emph{pressure} of $\phi$.
\end{definition}

\begin{theorem}[Special case of {\cite[Corollary 2.7.5(c) and Theorem 2.2.9]{MauldinUrbanski2}}]
\label{theoremgibbsexist}
If $\phi:\alphabet^\N\to\R$ is a summable locally H\"older continuous function, then there exists a unique Gibbs measure $\mu_\phi$ for $\phi$ which is invariant and ergodic under the shift map. Any other Gibbs measure is coarsely asymptotic to $\mu_\phi$. Moreover, $\mu_\phi$ is the unique equilibrium state for the inverse dynamical system.
\end{theorem}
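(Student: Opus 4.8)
The plan is to run the Ruelle--Perron--Frobenius (transfer operator) construction, adapted to countable alphabets as in \cite{MauldinUrbanski2}. Define the \emph{transfer operator} $\mathcal{L} = \mathcal{L}_\phi$ on bounded functions $f:\alphabet^\N\to\R$ by
\[
(\mathcal{L} f)(\omega) = \sum_{a\in \alphabet} e^{\phi(a\omega)} f(a\omega),
\]
where $a\omega$ denotes concatenation. The summability hypothesis $\sum_a \sup_{[a]} e^\phi < \infty$ makes $\mathcal{L}$ a well-defined bounded operator on $C_b(\alphabet^\N)$, and the local H\"older continuity of $\phi$, together with the exponential contraction of the metric $\dist(\omega,\tau)=\lambda^{|\omega\wedge\tau|}$ under the shift, shows that $\mathcal{L}$ preserves a bounded subcone of the locally H\"older functions, with uniform control on the H\"older seminorms of $\mathcal{L}^n f$ (this is where the bounded-distortion estimate for Birkhoff sums $S_n\phi = \sum_{j<n}\phi\circ\sigma^j$, itself a consequence of local H\"older continuity, enters).

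First I would construct an \emph{eigenmeasure}: the normalized dual action $\nu\mapsto \mathcal{L}^*\nu/(\mathcal{L}^*\nu)(\mathbf{1})$ on the weak-$*$ compact convex set of Borel probability measures on $\alphabet^\N$ is weak-$*$ continuous (summability gives uniform smallness of the tails $\sum_{a\notin F}\sup_{[a]}e^\phi$), so the Schauder--Tychonoff fixed point theorem produces $\nu$ and a scalar $e^P>0$ with $\mathcal{L}^*\nu = e^P\nu$. Iterating $\int \mathcal{L}^n f\,\dee\nu = e^{nP}\int f\,\dee\nu$ with $f=\mathbf{1}_{[\omega]}$ and using bounded distortion of $S_n\phi$ yields $\nu([\omega])\asymp_\times \exp\!\big(S_{|\omega|}\phi(\tau)-P|\omega|\big)$ for $\tau\in[\omega]$, so $\nu$ is already a Gibbs state, with $P$ the pressure. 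Next I would produce a positive eigenfunction $h$ as a weak limit of Cesàro averages $\frac1n\sum_{k<n}e^{-kP}\mathcal{L}^k\mathbf{1}$: the cone estimates force these to be uniformly bounded above and below and equi-locally-H\"older, so a subsequence converges to a locally H\"older $h$ with $0<\inf h\le\sup h<\infty$ and $\mathcal{L} h = e^P h$; normalizing $\int h\,\dee\nu = 1$, set $\mu = h\,\nu$. Then $\mu$ is $\sigma$-invariant, since for bounded $f$,
\[
\int f\circ\sigma\,\dee\mu = e^{-P}\int\mathcal{L}\big((f\circ\sigma)h\big)\,\dee\nu = e^{-P}\int f\,(\mathcal{L} h)\,\dee\nu = \int f\,\dee\mu,
\]
and since $h\asymp_\times 1$, $\mu$ is again a Gibbs state for $\phi$ with the same constant $P$.

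For ergodicity I would invoke the standard consequence of the cone/projective-metric contraction that $e^{-nP}\mathcal{L}^n f \to \big(\int f\,\dee\nu\big)h$ uniformly on each cylinder for locally H\"older $f$; this gives exactness, hence ergodicity, of $\mu$ (alternatively: any $\sigma$-invariant set has, by the Gibbs property and a Lebesgue-density argument on the cylinder $\sigma$-algebra, measure $0$ or $1$). For uniqueness and the coarse-asymptotic statement: if $\mu'$ is any Gibbs state for $\phi$ with constant $P'$, then $\sum_{|\omega|=n}\mu'([\omega])=1$ together with the two sets of Gibbs bounds forces $e^{n(P-P')}\asymp_\times 1$ for all $n$, hence $P'=P$; consequently $\mu'([\omega])\asymp_\times\mu([\omega])$ on every cylinder, and approximating Borel sets by finite unions of cylinders gives $\mu'\asymp_\times\mu$ with a uniform implied constant, i.e.\ $\mu'$ is coarsely asymptotic to $\mu$. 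If moreover $\mu'$ is shift-invariant, then $\dee\mu'/\dee\mu\in L^\infty(\mu)$ is $\sigma$-invariant, hence $\mu$-a.e.\ constant by ergodicity, hence $\equiv 1$, so $\mu'=\mu$. The main obstacle is precisely the infinite-alphabet construction of the eigenfunction $h$ together with the two-sided bounds $0<\inf h\le\sup h<\infty$: since $\mathcal{L}$ is not compact one cannot appeal to classical Perron--Frobenius and must extract these bounds by hand from the bounded-distortion property; ensuring weak-$*$ continuity of $\mathcal{L}^*$ (equivalently, the uniform tail control from summability) is the other place where the summability hypothesis is indispensable.
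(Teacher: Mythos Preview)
The paper does not prove this statement at all: it is stated with the attribution ``Special case of \cite[Corollary 2.7.5(c)]{MauldinUrbanski2}'' and is used as a black box in the subsequent proof of Theorem~\ref{theoremgibbs}. Your sketch is the standard Ruelle--Perron--Frobenius construction for countable-alphabet shifts, which is precisely the approach carried out in detail in \cite{MauldinUrbanski2}; the outline (eigenmeasure via Schauder--Tychonoff, eigenfunction via Ces\`aro averages with bounded distortion controlling the H\"older cone, ergodicity from exponential mixing, uniqueness from pressure equality and the Radon--Nikodym argument) is correct and matches the cited source, including your identification of the two-sided bound on $h$ as the place where the infinite alphabet requires genuine work beyond classical Perron--Frobenius.
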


\subsection{Proof of Theorem \ref{theoremgibbs}}
In what follows we will use the shorthand $\mu$ for $\mu_\phi$.
For each $\omega\in \alphabet^*$ let $D_\omega = \Diam(\pi([\omega]))$. By the Bounded Distortion Property, for all $\omega\in \alphabet^\N$ and $n\in\N$ we have
\[
D_{\omega\given n} \asymp_\times \prod_{j = 1}^n |u_{\omega_j}'(\pi\circ\sigma^j(\omega))|
\]
and so combining \eqref{lyapunovmu} with the ergodic theorem gives \eqref{dlimit} for $\mu$-a.e. $\omega\in \alphabet^\N$. Moreover, if $G = \alphabet^*$, then clearly \eqref{klimit} holds. So to complete the proof, we need to show that there exist $\kappa > 0$ and $r\in\N$ such that \eqref{Gkrdef} holds with $G = \alphabet^*$.

Suppose not; then for all $r,n\in\N$ there exist $\omega^{(r,n)} \in \alphabet^*$ and $\LL_{r,n}\in\Hyp$ such that
\begin{equation}
\label{omegarnLrn}
\mu_{\omega^{(r,n)}}\big(\big\{\tau\in \alphabet^\N : \pi([\tau\given |\omega^{(r,n)}| + r]) \cap \thickvar{\LL_{r,n}}{(1/n) D_{\omega^{(r,n)}}}\big\}\big) < 1/n.
\end{equation}
Fix $\LL_0\in\Hyp$, and for each $r,n$, let $f_{r,n}:\R^d\to\R^d$ be a similarity such that $f_{r,n}(\LL_{r,n}) = \LL_0$, $|f_{r,n}'| = 1/D_{\omega^{(r,n)}}$, and $f_{r,n}\circ u_{\omega^{(r,n)}}(x_0)$ is bounded for $x_0\in X$ fixed. Then by the Bounded Distortion Property, $(f_{r,n}\circ u_{\omega^{(r,n)}})_{r,n}$ is a normal family, and so we can find convergent subsequences
\[
f_{r,n}\circ u_{\omega^{(r,n)}} \tendsto n v_r \tendsto r v.
\]
Let $K$ be the limit set of $(u_a)_{a\in E}$. By hypothesis, $K$ is not contained in any proper real-analytic submanifold of $\R^d$; in particular, $K$ is not contained in $v^{-1}(\LL_0)$. So choose $\tau\in \alphabet^\N$ such that $v\circ\pi(\tau)\notin \LL_0$. There exists $r_0$ such that $\dist(v\circ\pi([\tau\given r_0]),\LL_0) > 0$. Since
\[
\dist\big(v_r\circ\pi([\tau\given r]),\LL_0\big) \underbrace{\geq}_{r\geq r_0} \dist\big(v_r\circ\pi([\tau\given r_0]),\LL_0\big) \tendsto r \dist\big(v\circ\pi([\tau\given r_0]),\LL_0\big) > 0,
\]
there exists $r\geq r_0$ such that $\dist(v_r\circ\pi([\tau\given r]),\LL_0) > 0$. Since
\begin{align*}
\frac{1}{D_{\omega^{(r,n)}}} \dist\big(\pi([\omega^{(r,n)}\ast \tau\given n]),\LL_{r,n}\big)
&= \dist\big(f_{r,n}\circ u_{\omega^{(r,n)}} \circ \pi([\tau\given n]),\LL_0\big)\\
&\tendsto n \dist\big(v_r\circ\pi([\tau\given r]),\LL_0\big) > 0,
\end{align*}
for all sufficiently large $n$ we have
\[
\dist\big(\pi([\omega^{(r,n)}\ast \tau\given r]),\LL_{r,n}\big) \geq (1/n) D_{\omega^{(r,n)}}.
\]
Combining with \eqref{omegarnLrn} shows that
\[
\mu_{\omega^{(r,n)}}([\omega^{(r,n)}\ast \tau\given r]) < 1/n.
\]
But by \eqref{gibbs},
\[
\mu_{\omega^{(r,n)}}([\omega^{(r,n)}\ast \tau\given r])
\asymp_\tau \exp\left(\sum_{j = 0}^{r - 1} \phi\circ\sigma^j(\tau) - Pr\right) \asymp_{\times,\tau,r} 1.
\]
Since $n$ can depend on $\tau$ and $r$, this is a contradiction.

\appendix
\section{Non-friendly quasi-decaying measures}
\label{appendix}

\subsection{An example using continued fractions}

Consider the CIFS  $(u_a)_{a\in \N}$, where the maps $u_a : [0,1] \to [0,1]$ are defined as 
\[
u_a(x) := \frac{1}{a+x}\cdot
\]
Let $\phi:\N^\N\to\R$ be the summable locally H\"older continuous function which is defined on cylinders as 
\[
\phi([n]) := -n\log(2).
\]
Then Theorem \ref{theoremgibbsexist} guarantees the existence of a unique Gibbs measure $\mu$ for $\phi$ which is invariant and ergodic under the shift map.

\begin{theorem}
\label{theoremQDnotfriendly}
The Gibbs measure $\mu$ on $\N^\N$ defined above is quasi-decaying and not friendly.
\end{theorem}
\begin{proof}
We first observe that since $\phi$ is constant on cylinders and $\sum_n e^{\phi(n)} = 1$, the invariant Gibbs measure $\mu$ is a Bernoulli measure with coefficients $(e^{\phi(n)})_{n\in\N}$. In particular, for each $n$ we have
\begin{equation}
\label{mufirstlevel}
\mu\left(\left[\tfrac1{n+1},\tfrac1n\right]\right) = \tfrac1{2^n}\cdot
\end{equation}
Summing over $n\geq N$ yields
\[
\mu\left(\left[0,\tfrac1N\right]\right) = \tfrac2{2^N}\cdot
\]
and in particular,
\[
\frac{\mu\left(\left[0,\tfrac1N\right]\right)}{\mu\left(\left[0,\tfrac1{2N}\right]\right)} = \frac{2^{2N}}{2^N} = 2^N \to \infty \text{ as } N \to \infty.
\]
This proves that $\mu$ is not Federer and thus not friendly. On the other hand, it follows from \eqref{mufirstlevel} that
\[
\chi_\mu \leq \sum_{n=1}^\infty \frac{\log(n+1)}{2^n} < \infty
\]
and thus by Theorem \ref{theoremgibbs}, $\mu$ is quasi-decaying.
\end{proof}

\subsection{An example using rational maps}
\begin{theorem}
\label{mariusz}
If $T: \hat \C \to \hat \C$ is a parabolic rational function and $\phi: J(T) \to \R$ is a H\"older continuous potential such that $P(\phi) > \sup(\phi)$, then $\mu_\phi$, the unique equilibrium state of $\phi$, fails to be Federer. Note that if in addition $J(T)$ is is not contained in a generalized sphere, then by combining with Theorem \ref{theoremrational}, we see that $\mu_{\phi}$ is quasi-decaying but not friendly.
\end{theorem}

\begin{proof}
Let $\xi \in J(T)$ be a repelling periodic point of $T$. Let $p\geq1$ be its prime period. Then 
\[
\lambda_\xi \df |(T^p)'(\xi)| >1.
\]
Then there exists $\delta>0$ so small that there exists unique holomorphic map $T_\xi^{-p}: B(\xi,\delta) \to B(\xi,\delta)$ sending $\xi$ to $\xi$ and such that $T^p \circ T_\xi^{-p}$ is the identity map restricted to $B(\xi,\delta)$. Furthermore, decreasing $\delta$ if necessary, there exists a constant $C\geq 1$ such that
\[
B(\xi, C^{-1} \lambda_\xi^{-n}\delta) \subseteq T_\xi^{-pn}(B(\xi,\delta)) \subseteq B(\xi, C \lambda_\xi^{-n}\delta)
\] 
for every integer $n \geq 0$. We note that the existence and uniqueness of $\mu_{\phi}$ is demonstrated in \cite{DenkerUrbanski}. Now let us assume, by way of contradiction, that  $\mu_{\phi}$ is Federer. It also follows from \cite{DenkerUrbanski} that
\begin{equation}
\label{1m7}
\mu_\phi \left( B(\xi, C^{-1} \lambda_\xi^{-n}\delta)  \right) 
\asymp \mu_\phi \left(  B(\xi, C \lambda_\xi^{-n}\delta) \right) 
\asymp \exp \bigg(  n \Big(S_p \phi(\xi)  - P(\phi)p \Big) \bigg) 
\end{equation}
for every integer $n \geq 0$, with all comparability constants independent of $n$. 

Since $\mu_{\phi}$ was assumed to be Federer, there exists a constant $D\geq 1$ such that 
\[
\mu_\phi(B(x,2r)) \leq D \mu_\phi(B(x,r))
\]
for all $x \in J(T)$ and all $r>0$. Denote
\begin{equation}
\label{4m7}
\kappa_\xi \df \exp\big(P(\phi)p - S_p \phi(\xi)\big).
\end{equation}
Given an integer $n \geq 1$ let $l_n \geq 1$ be the unique integer such that 
\begin{equation}
\label{2m7}
2^{l_n -1} \leq C^{-1} \lambda_\xi^n < 2^{l_n}.
\end{equation}

Then we have
\[
\mu_\phi (B(\xi,\delta)) \leq D^{l_n} \mu_\phi (B(\xi,C \lambda_\xi^{-n}\delta)). 
\]
Invoking \eqref{1m7} and \eqref{4m7}, this gives that 
\[
\mu_\phi (B(\xi,\delta)) \lesssim D^{l_n} \kappa_\xi^n \mu_\phi (B(\xi,\delta)). 
\] 
Equivalently,
\[
D^{l_n} \kappa_\xi^n \gtrsim 1.
\] 
Applying logarithms, we get 
\[
l_n \log(D) + n\log(\kappa_\xi) \gtrsim_+ 0
\]
for all integers $n \geq 1$. Dividing by $n$, and using \eqref{2m7}, and letting $n \to \infty$, we get 
\[
\log(\lambda_\xi)\frac{\log(D)}{\log(2)} + \log(\kappa_\xi) \geq 0.
\] 
Hence, 
\[
\log(\lambda_\xi)\frac{\log(D)}{\log(2)} + p \big(\sup(\phi) - P(\phi)\big) \geq 0,
\]
equivalently
\[
\left( \frac1p \log(\lambda_\xi) \right)\frac{\log(D)}{\log(2)} \geq P(\phi) - \sup(\phi) > 0.
\]
To end, note that the following claim will produce a contradiction.
\begin{claim}
We have that
\[
\inf \left\{ \frac1p \log(\lambda_\xi) \right\} = 0,
\]
where the infimum is taken over all repelling periodic points of $T$. 
\end{claim}
\begin{subproof}
Let $\eta$ be a parabolic fixed point of $T$, and let $\alpha \neq \eta$ be a preimage of $\eta$, i.e. $T(\alpha) = \eta$. Let $U$ be a neighborhood of $\eta$ small enough so that there exists a biholomorphic conjugacy $\Phi:U\to V$ between $T$ and the map $z\mapsto z+1$, where $V$ is a neighborhood of $\infty$ in the Riemann sphere (cf. \cite[Theorem 10.9]{Milnor2}).
Then there exist $n\in\N$ and $\alpha_n \in U$ such that $T^n(\alpha_n) = \alpha$. Furthermore, letting $\alpha_m = \Phi^{-1}(\Phi(\alpha_n) - (m-n))$ for $m > n$ and $\alpha_m = T^{n - m}(\alpha_n)$ for $m < n$, we see that there exists a sequence $(\alpha_m)$ such that $\alpha_0 = \alpha$, $T(\alpha_{m+1}) = \alpha_m$, and $\dist(\eta,\alpha_m) \asymp 1/m$.
Furthermore, there exists a family of neighborhoods $(B_m)_{m\geq n}$ of $(\alpha_m)_{m\geq n}$, such that $T(B_{m+1}) = B_m$.
Let $B := B_n$. We then have that $T^{n+1}(B) \ni \eta$. Therefore, by shrinking $B$ if necessary, without loss of generality we may assume that 
\[
\eta \notin \del T^{n+1}(B).
\]
Therefore, it follows that the winding number $\omega(\del T^{n+1}(B),\eta) > 0$.
Let $W$ be the connected component of the complement of the boundary of $ \C \butnot \del T^{n+1}(B)$ containing $\eta$. We know that $\omega(\del T^{n+1}(B),\cdot) >0$ on $W$. Thus, for $m$ large enough, we have that $B_m \subset W$ and thus $B_m$ contains a periodic point of order $m+1$, which we call $\xi$. It follows from a straightforward computation that $|(T^{m+1})'(\xi)| \asymp m^2$, and in particular $\xi$ is repelling for $m$ sufficiently large.
Since
\[
\lim_{m \to \infty} \frac{1}{m+1} \log(m^2) = 0
\] 
this ends the proof of the claim.
\end{subproof}
Thus concludes the proof of Theorem \ref{mariusz}. 
\end{proof}

\bibliographystyle{amsplain}

\bibliography{bibliography}

\end{document}